\documentclass[a4paper,leqno,11pt]{amsart}

\usepackage{amssymb}
\usepackage{amsthm}
\usepackage{tikz-cd}
\usepackage{mathtools}
\usepackage{framed}
\usepackage[mathscr]{eucal}
\usepackage{fullpage}
\usepackage{hyperref}
\usepackage{enumitem}
\usepackage{wasysym}
\usepackage{xcolor}
\usepackage{scalerel}
\usepackage{comment}
\usepackage[all]{xy}
\hypersetup{colorlinks=true,citecolor=blue,urlcolor =black,linkbordercolor={1 0 0}}
\mathtoolsset{centercolon}
\title{Twisted Arinkin transforms and derived categories of moduli spaces on Kuznetsov components}
\author{Moritz Hartlieb}
\address{Mathematisches Institut, Universität Bonn, Endenicher Allee 60, 53115
Bonn, Germany}
\email{hartlieb@math.uni-bonn.de}

\author{Saket Shah} 
\address{Department of Mathematics, University of Michigan, Ann Arbor, MI 48109, USA}
\email{sakets@umich.edu}
\date{\today}

\newcommand{\cO}{\mathcal{O}}
\newcommand{\cA}{\mathcal{A}}

\newcommand{\cC}{\mathcal{C}}

\newcommand{\cK}{\mathcal{K}}
\newcommand{\cL}{\mathcal{L}}

\newcommand{\cP}{\mathcal{P}}
\newcommand{\cQ}{\mathcal{Q}}

\newcommand{\cS}{\mathcal{S}}

\newcommand{\bbC}{\mathbb{C}}

\newcommand{\bbG}{\mathbb{G}}

\newcommand{\bbQ}{\mathbb{Q}}

\newcommand{\bbZ}{\mathbb{Z}}

\newcommand{\congpf}{\xymatrix@L=0.6ex@1@=15pt{\ar[r]^-\sim&}}

\newcommand{\Hom}{\operatorname{Hom}}
\newcommand{\Pic}{\operatorname{Pic}}
\newcommand{\Br}{\operatorname{Br}}
\newcommand{\NS}{\operatorname{NS}}

\newcommand{\SBr}{\operatorname{SBr}}
\newcommand{\Db}{\operatorname{D}^b}
\newcommand{\Ku}{\cK u}
\newcommand{\Stab}{\operatorname{Stab}}

\DeclareMathOperator{\Sh}{Sh}
\DeclareMathOperator{\Hilb}{Hilb}

\usepackage[T2A]{fontenc}
\DeclareSymbolFont{cyrillic}{T2A}{cmr}{m}{n}
\DeclareMathSymbol{\Sha}{\mathalpha}{cyrillic}{216}

\newtheorem{theorem}{Theorem}[section]
\newtheorem{lemma}[theorem]{Lemma}
\newtheorem{proposition}[theorem]{Proposition}
\newtheorem{conjecture}[theorem]{Conjecture}
\newtheorem{corollary}[theorem]{Corollary}

\theoremstyle{definition}
\newtheorem{definition}[theorem]{Definition}

\newtheorem{remark}[theorem]{Remark}

\DeclareMathOperator{\pr}{pr}

\newcommand{\transl}{\tau}

\usepackage[
  backend=biber,
  style=alphabetic,
  sorting=nty,
  maxcitenames=50,
  maxnames=50
]{biblatex}

\renewbibmacro{in:}{}
\AtEveryBibitem{%
  \clearfield{issn} 
  \clearfield{doi} 
  \clearfield{isbn}
  \ifentrytype{online}{}{
    \clearfield{url}
  }
}

\DeclareSourcemap{
  \maps[datatype=bibtex]{
    \map{
      \step[typesource=arxiv,       typetarget=misc]
      \step[fieldset=entrysubtype, fieldvalue=arxiv]
    }
  }
}

\DeclareFieldFormat[misc]{title}{%
  \iffieldequalstr{entrysubtype}{arxiv}
    {\mkbibquote{#1}}
    {\mkbibemph{#1}}}

\setlist[enumerate]{label={\rm{(\roman*)}}}

\numberwithin{equation}{section}

\begin{document}

\begin{abstract}
    In this note, we generalize results of Donagi and Pantev on twisted derived equivalences between elliptically fibered surfaces to higher dimensions. First, we establish a twisted derived equivalence between torsors under abelian schemes satisfying a certain compatibility condition. Then, relying on the work of Arinkin on compactified Jacobians, we extend the equivalence to twisted compactified Jacobians associated to curves on K3 surfaces. This positively answers a question stated by Mattei and Meinsma in \cite{matteimeinsma2025}. We then extend methods of Bottini and Huybrechts for Fano varieties of lines on cubic fourfolds to prove an analogue of their main theorem in \cite{huybrechtsbottini} for general moduli spaces of Bridgeland-stable objects on Kuznetsov components admitting rational Lagrangian fibrations. 
\end{abstract}
\maketitle
\tableofcontents

\section{Introduction}
Let $A$ be an abelian variety over $\mathbb C$. In \cite{mukaiduality}, Mukai has shown that the Poincar\'e sheaf induces a derived equivalence
$$D^b(A) \simeq D^b(\check{A})$$
between $A$ and its dual abelian variety $\check{A}$. More generally, if $A / B$ is an abelian scheme, then the relative Poincar\'e sheaf induces a derived equivalence between  $A$ and its dual abelian scheme $\check{A} / B.$ Suppose that $X$ is a torsor under $A$. By interpreting $\check{A} = \Pic^0_{A/B}\simeq\Pic^0_{X/B}$ as a moduli space of line bundles on $X$, one obtains a twisted Poincar\'e sheaf on $X \times \check{A}$, which induces an equivalence
$$D^b(X) \simeq D^b(\check{A}, \alpha),$$
where $\alpha \in \Br(\check{A})$ is the Brauer class obstructing $\check{A}$ from being a fine moduli space of line bundles on $X$.

One of the aims of this note is to study generalizations of the above to obtain equivalences between compactifications of torsors under abelian schemes, generalizing results on elliptic fibrations by Donagi and Pantev \cite{donagipantev}. In the setting of abelian schemes, we prove the following
\begin{theorem}[{Thm.\ \ref{thm:main_abelian_schemes}}]
    \label{thm:main_abelian_schemes_intro}
    Let $A$ be an abelian scheme over a quasi-projective non-singular variety $B$. Let $X$ be a torsor under $A$ and $Y$ a torsor under the dual abelian scheme $\check{A}$. Assume that $[X] \in H^1(B, \mathcal A)$ is $n$-torsion and $[Y] \in H^1(B, \check{\mathcal A})$ is $n$-divisible. Then, there are Brauer classes $\alpha \in \Br(X)$ and $\beta \in \Br(Y)$ and exact linear equivalences
    $$D^b(X, \alpha \cdot \pi_X^*\gamma) \simeq D^b(Y, \beta \cdot \pi_Y^* \gamma)$$
    for all $\gamma \in \Br(B).$
\end{theorem}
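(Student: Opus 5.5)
The idea is to realize $Y$ as a (twisted) moduli space of sheaves on $X$, in the spirit of Donagi--Pantev. Since $[Y]$ is $n$-divisible, we write $[Y] = n[Y']$ for some $\check A$-torsor $Y'$. Since $[X]$ is $n$-torsion, the sheaf $\mathcal{P}ic^{n\theta}$-type components of the relative Picard scheme of $X$ become accessible. Concretely, consider the relative Picard functor $\Pic_{X/B}$. Its components are torsors under $\Pic^0_{X/B} \simeq \check A$. The degree-$n$ (or more precisely multiplication-by-$n$) components have classes given by $n$ times the class of $X$ under a suitable pairing.

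The cleanest way to organize this is through the Poincaré bundle on $A\times_B \check A$ together with the translation maps $\transl$. First, I would establish the basic Mukai-type statement from the introduction: the relative Poincaré sheaf gives $D^b(X) \simeq D^b(\check A, \alpha_X)$, where $\alpha_X \in \Br(\check A)$ is the obstruction class. Under the standard identification (the Tate--Shafarevich/Brauer comparison via the Leray spectral sequence), $\alpha_X$ is the image of $[X]$ under $H^1(B,\mathcal A) \to H^1(B, \check{\check{\mathcal A}}) \to \Br(\check A)$. Dually, I would establish $D^b(Y) \simeq D^b(A, \beta_Y)$.

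Second, to pass from the zero torsors to genuine torsors on both sides, I would work with the fiber product $X\times_B Y$ and construct a twisted "Poincaré" object on it. Locally over $B$, trivialize both torsors and glue the Poincaré bundles. The gluing fails by a cocycle in $H^2$, which breaks into three parts: a class pulled back from $X$, one pulled back from $Y$, and a mixed term pairing $[X]$ with $[Y]$ via the Weil pairing/biextension structure. Here the hypothesis enters. Writing $[Y]=n[Y']$ and using $n[X]=0$, the mixed term $\langle [X], n[Y']\rangle = \langle n[X],[Y']\rangle$ vanishes. What remains is exactly $\alpha$ on $X$ and $\beta$ on $Y$. We therefore obtain a $(\alpha^{-1}\boxtimes\beta)$-twisted sheaf $\mathcal P$ on $X\times_B Y$.

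Third, we show the Fourier--Mukai functor with kernel $\mathcal P$ is an equivalence. This is fiberwise (étale-locally over $B$) the relative Poincaré equivalence, so full faithfulness and essential surjectivity follow from the standard base-change criterion of Bridgeland, or equivalently by checking that the convolution $\mathcal P^\vee * \mathcal P$ is the diagonal étale-locally on $B$. Finally, twisting by $\gamma \in \Br(B)$ is harmless: tensoring the kernel with a $\gamma$-twisted (Azumaya) object pulled back from $B$, or noting that the kernel is relative over $B$, gives the equivalence $D^b(X,\alpha\cdot\pi_X^*\gamma)\simeq D^b(Y,\beta\cdot\pi_Y^*\gamma)$.

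The main obstacle is the second step: precisely computing the gluing obstruction on $X\times_B Y$ and identifying the mixed term as the pairing $H^1(B,\mathcal A)\times H^1(B,\check{\mathcal A})\to H^2(X\times_B Y,\mathbb G_m)$ induced by the biextension. One must then verify that $n$-torsion and $n$-divisibility genuinely kill it, rather than kill it only up to a class from $\Br(B)$; if only the latter holds, the ambiguity is absorbed into $\gamma$.
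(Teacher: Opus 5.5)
This is essentially the paper's proof. You glue the local Poincar\'e sheaves via the theorem of the square, $(\transl_{s_{ij}}\times\transl_{t_{ij}})^*\cP\simeq\cP\otimes\pr_1^*(\mathrm{id}\times t_{ij})^*\cP\otimes\pr_2^*(s_{ij}\times\mathrm{id})^*\cP\otimes\pi_B^*(s_{ij}\times t_{ij})^*\cP$, remove the cross term by bilinearity, and check both the equivalence and the $\gamma$-twist \'etale-locally over $B$, using only that the kernel is supported on $X\times_BY$.

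The step you flag as the main obstacle is handled on cochains. There the cross term is a cochain of line bundles on the $U_{ij}$. What it obstructs is the lifting of $[Y]$ from $H^1(B,\mathcal Pic_{X/B})$ to $F^1\Br(X)$, which is a class in $H^3(B,\mathbb G_m)$, not a Brauer class on $X\times_BY$. Choose cocycles with $ns_{ij}=s_i'-s_j'$ (possible since $n[X]=0$) and $t_{ij}=nt'_{ij}$ with $t'$ representing $[Y']$. This gives $(s_{ij}\times t_{ij})^*\cP\simeq(ns_{ij}\times t'_{ij})^*\cP\simeq(s_i'\times t'_{ij})^*\cP\otimes(s_j'\times t'_{ji})^*\cP$. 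The two factors are absorbed into line bundles $M_{ij}$ on $A|_{U_{ij}}$ and $N_{ij}$ on $\check A|_{U_{ij}}$, and each collection separately satisfies the gerbe conditions. So the cross term vanishes exactly, not just up to $\Br(B)$.
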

For a more general version of Theorem \ref{thm:main_abelian_schemes_intro}, allowing suitable singular fibers, see Theorem \ref{thm:main_abelian_schemes}. A discussion on the torsion (resp.\ divisibility) assumption may be found in \cite[Sec.\ 2.3]{donagipantev}.

\begin{remark}
    During the preparation of this note, we found that Theorem \ref{thm:main_abelian_schemes_intro} is very close to a result by Ben-Bassat \cite[Cor.\ 6.2]{benbassat}, which follows from a more general formalism for twisting derived equivalences. However, we believe it is worth to provide our alternative proof as we need not assume that the Brauer group $\Br(B)$ is trivial and our description of the Brauer classes is somewhat more explicit, cf.\ Remark \ref{rem:benbassat}.
\end{remark}

The remainder of this note is concerned with applications of Theorem \ref{thm:main_abelian_schemes_intro} to Lagrangian-fibered hyperkähler manifolds of K3$^{[n]}$-type. Let us recall the results of Donagi and Pantev on twisted derived equivalences between elliptic K3 surfaces:
Let $S \to \mathbb P^1$ be an elliptic K3 surface with a section (and at worst nodal fibers). Then, we have $\Br(S) \simeq\Sha(S)$, where the later group is the Tate--Shafarevich group parametrizing compactifications of torsors under the generic fiber of $S \to \mathbb P^1$, see \cite[Sec.\ 11.5]{lecK3}. In particular, for every $\alpha \in \Br(S)$, there is an elliptic K3 surface $S_{\alpha} \to \mathbb P^1$, whose generic fiber is a torsor under the generic fiber of $S / \mathbb P^1$. Hodge theory provides us with a restriction map $o_{\alpha} \colon \Br(S) \to \Br(S_{\alpha})$, the kernel of which is generated by $\alpha$. In the seminal article \cite{donagipantev}, Donagi and Pantev proved the following generalization of Mukai's equivalence to elliptic K3 surfaces.
\begin{theorem}[{\cite[Thm.\ A]{donagipantev}}]\label{thm:donagi_pantev}
    Let $S$ be an elliptic K3 surface with a section and at worst nodal fibers. Then, for any pair of Brauer classes $\alpha, \beta \in \Br(S)$, there is an exact linear equivalence
    $$D^b(S_{\alpha}, o_{\alpha}(\beta)^{-1}) \simeq D^b(S_{\beta}, o_{\beta}(\alpha)).$$
\end{theorem}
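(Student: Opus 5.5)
Although Theorem \ref{thm:donagi_pantev} is quoted from \cite{donagipantev}, we indicate how it follows from the relative Fourier--Mukai formalism behind Theorem \ref{thm:main_abelian_schemes_intro}; this is essentially Donagi and Pantev's argument.

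Write $\pi\colon S\to\mathbb P^1$ and let $U\subset\mathbb P^1$ be the open set of smooth fibers. Over $U$, $S_U$ is an elliptic curve $E$ over $U$, i.e.\ an abelian scheme of relative dimension one. It is canonically self-dual via $x\mapsto\cO(x-o)$, where $o$ is the zero section. The restriction of $S_\alpha$ to $U$ is an $E$-torsor whose class in $H^1(U,\mathcal E)$ is the image of $\alpha$ under $\Br(S)\simeq\Sha(S)$. Every element of $\Sha(S)$ is torsion, and by the Donagi--Pantev discussion the classes coming from $\Br(S)$ are also divisible in the required sense.

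The key identification is that $o_\alpha(\beta)\in\Br(S_\alpha)$ is the obstruction class (gerbe) for the universal line bundle on $S_\alpha\times_{\mathbb P^1}\Pic^d(S_\beta/\mathbb P^1)$, after identifying a suitable degree component $\Pic^d$ of the relative Jacobian of $S_\beta$ with $S_\alpha$. One checks this on Tate--Shafarevich groups using the Hodge-theoretic description of $o_\alpha$. Granting it, the plan is as follows.

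\begin{enumerate}
\item Form the fiber product $S_\alpha\times_{\mathbb P^1}S_\beta$. It is an integral Gorenstein surface-fibered threefold whose singularities lie over the nodes.
\item On the smooth locus, build the twisted relative Poincar\'e sheaf $\cP$. It is twisted by $o_\alpha(\beta)^{-1}\boxtimes o_\beta(\alpha)$, and this twist is exactly what makes the two sides of the equivalence match.
\item Extend $\cP$ over the nodal fibers as the twisted version of the rank-one torsion-free universal sheaf on the compactified Jacobian of nodal curves. This extension is flat over each factor.
\end{enumerate}

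Over $U$ the kernel induces an equivalence by the twisted Mukai argument, i.e.\ the abelian scheme case (Theorem \ref{thm:main_abelian_schemes}). To get an equivalence on all of $S_\alpha$, it suffices by the Bondal--Orlov/Bridgeland criterion to check two things: that $\Phi_\cP$ is fully faithful on skyscrapers, and that $S_\alpha,S_\beta$ have trivial canonical bundle with $\dim$ equal. For points $x,y$ in the same fiber $C$, the computation reduces to $\mathrm{Ext}^i_C(\cP_x,\cP_y)$. On smooth fibers these vanish unless $x=y$, by Mukai. On a nodal rational fiber the same vanishing holds because the Fourier--Mukai transform of the nodal cubic with respect to its compactified Jacobian is an equivalence, which is the one-dimensional case of Arinkin's theorem. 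For points in different fibers the vanishing follows from flatness over $\mathbb P^1$. Since $K$ is trivial on both surfaces, Bridgeland's criterion then promotes full faithfulness to an equivalence.

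The main obstacle is the Brauer class bookkeeping: showing that the obstruction to the universal sheaf is precisely $o_\alpha(\beta)^{-1}\boxtimes o_\beta(\alpha)$, rather than some other pair with the same image. I would first settle this on the generic fiber, where the question becomes a cup-product pairing $H^1(\mathcal E)\times H^1(\mathcal E)\to H^2(\bbG_m)$ computed via the Weil pairing. I would then argue by purity that the class on $S_\alpha$ is determined by its generic restriction.
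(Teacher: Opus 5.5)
The paper does not prove this statement; it is quoted from Donagi--Pantev. The relevant comparison is therefore with the method of Sections~\ref{sec:twistedpc} and~\ref{sec:equiv}, which you invoke yourself. Measured against that, there is a genuine gap exactly where the content lies: you never construct a kernel that is twisted on \emph{both} factors.

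Your ``key identification'' of $S_\alpha$ with a component $\Pic^d(S_\beta/\mathbb P^1)$ fails for a general pair. If $S_\beta$ has class $\beta\in\Sha(S)\simeq\Br(S)$, then $\Pic^d(S_\beta/\mathbb P^1)$ is the torsor of class $\pm d\beta$. An identification compatible with the torsor structures therefore forces $\alpha\in\langle\beta\rangle$. Two classes of order $n$ spanning a non-cyclic subgroup of $\Br(S)[n]$ already give a counterexample.

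Moreover, even when the identification holds, the universal sheaf on $S_\beta\times_{\mathbb P^1}\Pic^d(S_\beta/\mathbb P^1)$ is untwisted along $S_\beta$. This matches the statement only because $o_\beta(d\beta)=0$. So your argument actually yields only the one-sided equivalence between $\Db(S_\beta)$ and a twisted derived category of $\Pic^d(S_\beta/\mathbb P^1)$. Steps (2)--(3) then simply assert that a sheaf twisted by $o_\alpha(\beta)^{-1}\boxtimes o_\beta(\alpha)$ exists. Your closing paragraph (Weil pairing plus purity) would at best identify a Brauer class; it does not produce a twisted sheaf carrying it.

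The missing idea is descent, carried out as follows.
\begin{enumerate}
\item Start from the untwisted Arinkin sheaf $\cP$ on $S\times_{\mathbb P^1}S$. It already exists over all of $\mathbb P^1$, nodal fibers included, so there is no need to work over $U$ and extend.
\item Pick an \'etale cover trivializing both torsors, with cocycles $s_{ij}$ and $t_{ij}$.
\item By Lemma~\ref{lem:trtrP1}, $(\transl_{s_{ij}}\times\transl_{t_{ij}})^*\cP$ differs from $\cP$ by a line bundle from each factor \emph{and} by $\pi_B^*(s_{ij}\times t_{ij})^*\cP$. This last term is pulled back from the base and belongs to neither factor; it is the real obstruction.
\item Choose $s'_i$ with $ns_{ij}=s'_i-s'_j$ and $t'_{ij}$ with $t_{ij}=nt'_{ij}$. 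The theorem of the square then gives $(s_{ij}\times t_{ij})^*\cP\simeq((s'_i-s'_j)\times t'_{ij})^*\cP$ (Lemma~\ref{lem:trtrP2}), which distributes this term between the two factors.
\item This yields line bundles $M_{ij}$ and $N_{ij}$ whose cocycle data, built from the normalization and square compatibilities, define the gerbes on $S_\alpha$ and $S_\beta$ and the twisted kernel.
\end{enumerate}
Step (iv) is the only place the torsion and divisibility hypotheses enter. In your proposal these hypotheses are stated but never used.

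Once the kernel exists, your Bridgeland-criterion check is a workable alternative to Proposition~\ref{prop:fullyfaithful}. That proposition argues differently: the unit and counit are $\mathbb P^1$-linear, and their cones vanish \'etale locally, where the kernel is the untwisted Arinkin sheaf. Hence they vanish globally, with no pointwise computation at the nodes.
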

We generalize the above theorem to higher dimensions: Let $(S, L)$ be a polarized K3 surface and assume that all members of the linear system $|L|$ are integral curves. Then, in \cite{huybrechtsmattei}, Huybrechts and Mattei introduced the 'primitive' special Brauer group $\SBr(S, L)$, which parametrizes twists of the compactified Jacobian
$\overline{\Pic}^0(\mathcal C / |L|)  \to |L|$ of the universal curve $\mathcal C / |L|$. In particular, for $\alpha \in \SBr(S, L)$, there is a Lagrangian fibered Hyperkähler manifold $\overline{\Pic}^0_{\alpha} \to |L|$, the generic fiber of which is a torsor over the generic fiber of $\overline{\Pic}^0 \to |L|$, see Section \ref{sec:background_twistedcompact}. For any $\alpha \in \SBr(S, L)$, we construct a group homomorphism
$$o_{\alpha} \colon \SBr(S, L) \to \Br(\overline{\Pic}^0_{\alpha})$$
and prove the following generalization of Theorem \ref{thm:donagi_pantev}:
\begin{theorem}[{Thm.\ \ref{thm:main_hk}}]\label{thm:main_hk_intro}
    Let $\alpha, \beta \in \SBr(S,L)$ be special Brauer classes. Then, there is a twisted derived equivalence
    $$D^b(\overline{\Pic}^0_{\alpha}(\mathcal C / |L|), o_{\alpha}(\beta)^{-1}) \simeq D^b(\overline\Pic_{\beta}^0(\mathcal C / |L|), o_{\beta}(\alpha)).$$
\end{theorem}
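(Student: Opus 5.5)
We will deduce the statement from the general form of Theorem \ref{thm:main_abelian_schemes}, which allows suitable singular fibers, with Arinkin's autoduality of compactified Jacobians playing the role of the Poincar\'e sheaf. Set $M = \overline{\Pic}^0(\mathcal C/|L|) \to |L|$. Since all curves in $|L|$ are integral with planar singularities, Arinkin's theorem gives a Poincar\'e sheaf $\mathcal P$ on $M \times_{|L|} M$. It is a maximal Cohen--Macaulay sheaf, flat over both factors, and restricts to the classical Poincar\'e bundle over the locus of smooth curves. The Fourier--Mukai transform $\Phi_{\mathcal P}$ is an autoequivalence of $D^b(M)$. So $M$ is relatively self-dual, and over the smooth locus $U \subset |L|$ the family $M_U \to U$ is a principally polarized abelian scheme $A$ with $\check A \simeq A$.

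\textbf{Step 1 (twisted Poincar\'e sheaf).} Following Huybrechts--Mattei, a class $\alpha \in \SBr(S,L)$ gives a twisted universal family of sheaves on $S \times \overline{\Pic}^0_\alpha$, supported on $\mathcal C \times_{|L|}\overline{\Pic}^0_\alpha$. The space $\overline{\Pic}^0_\alpha$ is étale-locally over $|L|$ isomorphic to $M$, because over an étale cover with a section the twist is trivialized. Over such a cover $\{V_i\}$ we pull back Arinkin's sheaf $\mathcal P$ along local isomorphisms $\overline{\Pic}^0_{\alpha}|_{V_i}\simeq M|_{V_i}$ and $\overline{\Pic}^0_{\beta}|_{V_i}\simeq M|_{V_i}$. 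The local isomorphisms differ on overlaps by translations $t_a$, $t_b$ by local sections of the relative Jacobian. We then use the standard identity
$$(t_a\times t_b)^*\mathcal P \simeq \mathcal P \otimes \pr_1^*\mathcal P_b \otimes \pr_2^*\mathcal P_a,$$
which extends from $U$ to all of $|L|$ by Arinkin's construction, since $\mathcal P$ is determined by its restriction to the locus where one factor is a line bundle. This shows that the local sheaves glue to a sheaf on $\overline{\Pic}^0_\alpha\times_{|L|}\overline{\Pic}^0_\beta$ twisted by $\pr_1^*\beta'\cdot \pr_2^*\alpha'$. The cocycles $\{\mathcal P_b\}$ and $\{\mathcal P_a\}$ define Brauer classes on $\overline{\Pic}^0_\alpha$ and $\overline{\Pic}^0_\beta$ respectively.

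\textbf{Step 2 (identify the twists).} We define $o_\alpha(\beta)$ as the class on $\overline{\Pic}^0_\alpha$ built from the torsor cocycle of $\beta$ in this way, and show that it agrees with the homomorphism $o_\alpha$ constructed earlier. The sign $o_\alpha(\beta)^{-1}$ versus $o_\beta(\alpha)$ will come from dualizing on one factor, i.e.\ from using $\mathcal P^\vee$ or $\iota^*$ with $\iota=-1$. We then check compatibility with Theorem \ref{thm:main_abelian_schemes} over $U$, where $[\overline{\Pic}^0_\alpha|_U]$ is torsion because Brauer classes are torsion. We must also verify that the Huybrechts--Mattei Brauer obstruction class is absorbed. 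Its pullback along either projection is $\pi^*\gamma$-type, and the $\pi^*\gamma$ freedom in Theorem \ref{thm:main_abelian_schemes} covers it.

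\textbf{Step 3 (equivalence).} We show that the twisted kernel induces an equivalence. Fully faithfulness and essential surjectivity are étale-local over $|L|$, since the convolution of the kernel with its adjoint is computed fiberwise over the base. Étale-locally the kernel is Arinkin's $\mathcal P$, whose convolution with $\mathcal P^\vee[g]$ is $\mathcal O_\Delta$, so the convolution is $\mathcal O_\Delta$ twisted trivially. Hence the functor is an equivalence.

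The main obstacle is the gluing in Step 1 over the singular locus. There we must make sure the translation identity for Arinkin's sheaf holds canonically, with coherent isomorphisms satisfying the cocycle condition. We also need the twisted compactified Jacobians to be étale-locally isomorphic to $M$ compatibly with translation by the full group scheme $\Pic^0(\mathcal C/|L|)$, including its action on non-locally-free sheaves. I would handle this using Arinkin's characterization of $\mathcal P$ as the unique maximal Cohen--Macaulay extension from the codimension $\ge 2$ complement; this forces any isomorphism defined on the open set to extend uniquely, which gives the cocycle condition automatically.
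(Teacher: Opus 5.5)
Your overall architecture matches the paper's: descend Arinkin's sheaf along the torsor cocycles, then check the equivalence \'etale-locally over $|L|$. Your Step 3 is essentially Proposition \ref{prop:fullyfaithful}. However, the descent in Step 1 has a genuine gap, because the translation identity you use drops a term. By Lemma \ref{lem:square_identities},
\[
(\transl_a\times\transl_b)^*\cP\simeq\cP\otimes\pr_1^*(\mathrm{id}\times b)^*\cP\otimes\pr_2^*(a\times\mathrm{id})^*\cP\otimes\pi_B^*(a\times b)^*\cP ,
\]
and the last factor, a line bundle on an overlap in the base, is the whole difficulty.

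Concretely, $\{(\mathrm{id}\times t_{ij})^*\cP\}$ is \emph{not} a gerbe datum on $\overline{\Pic}^0_\alpha$ in the sense of Proposition \ref{prop:gerbedesc}. Since $\transl_{s_{ij}}^*(\mathrm{id}\times t_{jk})^*\cP\simeq(\mathrm{id}\times t_{jk})^*\cP\otimes\pi^*(s_{ij}\times t_{jk})^*\cP$, conditions (C1) and (C2) hold only up to line bundles of the form $(s\times t)^*\cP$ on $U_{ij}$ and $U_{ijk}$. You can trivialize these after refining the cover. But whether the trivializations can be chosen to satisfy (C3) on quadruple overlaps is a genuine condition on the base, which pairs the two torsor classes through $\cP$.

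This problem already occurs over the smooth locus, where everything is a line bundle. So uniqueness of the maximal Cohen--Macaulay extension cannot resolve it. That uniqueness only propagates isomorphisms across singular fibers; it does not coherently trivialize line bundles pulled back from $|L|$.

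The missing idea is exactly where the hypotheses of Theorem \ref{thm:main_abelian_schemes} enter.
\begin{itemize}
\item If $s=[\overline{\Pic}^0_\alpha]$ is $n$-torsion, write $ns_{ij}=s_i'-s_j'$.
\item If $t=[\overline{\Pic}^0_\beta]$ is $n$-divisible, write $t_{ij}=nt'_{ij}$ with $t'$ a cocycle.
\end{itemize}
Then $(s_{ij}\times t_{ij})^*\cP\simeq(s_i'\times t'_{ij})^*\cP\otimes(s_j'\times t'_{ji})^*\cP$ by Lemma \ref{lem:trtrP2}. Absorbing the two halves into $M_{ij}$ and $N_{ij}$ gives canonical sections $\theta_{ij},\theta_{ijk}$, for which (C3) follows from axiom (5) of the Arinkin sheaf.

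Here both hypotheses hold because $\SBr(S,L)$ is torsion \emph{and} divisible and maps homomorphically onto the projective twists. You invoke only torsion, and only over $U$; divisibility never appears.

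You also need $\Br(|L|)=0$, which your proposal never uses:
\begin{itemize}
\item it makes $o_\alpha(\beta)$ an actual class rather than a class modulo $\Br(B)$;
\item combined with Lemma \ref{lem:grouphom}, it identifies the class $o'_\beta(\alpha)$ produced on the second factor with $o_\beta(\alpha)$.
\end{itemize}

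Two side remarks in your Step 2 are also off:
\begin{itemize}
\item The Huybrechts--Mattei obstruction class is not pulled back from $|L|$; it is $\pm o_\alpha(1)$ by Proposition \ref{prop:hk_obstruction}. It plays no role in this theorem, and since $\Br(|L|)=0$ there is no $\pi^*\gamma$ freedom to absorb anything.
\item The inverse on $o_\alpha(\beta)$ does not come from dualizing. It is simply the convention that an $o_\alpha(\beta)\boxtimes o_\beta(\alpha)$-twisted kernel acts on $D^b(\overline{\Pic}^0_\alpha,o_\alpha(\beta)^{-1})$.
\end{itemize}
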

Moreover, we identify certain instances of $o_{\alpha}(\beta)$ with Brauer classes arising from the interpretation of $\overline\Pic^0_{\alpha}$ as a moduli space of twisted sheaves on $S$, see Section \ref{sec:identifying_brauer}.
In particular, Theorem \ref{thm:main_hk_intro} generalizes \cite[Prop.\ 2.1]{adm}, see also \cite[Cor.\ 5.9]{matteimeinsma2025}, and \cite[Thm.\ 3.3]{bottini}, and provides a positive answer to the question posed after \cite[Cor.\ 5.9]{matteimeinsma2025}.

From Theorem \ref{thm:main_hk_intro}, we deduce the following description of twisted derived categories of Lagrangian fibered hyperkähler manifolds of K3$^{[n]}$-type:
\begin{corollary}[{Cor.\ \ref{cor:generallangrangianfibered}}]\label{cor:generallang}
    Let $f \colon X \to \mathbb P^n$ be a Lagrangian fibered hyperkähler manifold of K3$^{[n]}$-type and assume that $X$ has Picard rank $2$ and $f^* \mathcal O_{\mathbb P^n}(1)$ is indivisible in $H^2(X, \mathbb Z)$. Then, there is a twisted K3 surface $(S, \alpha)$, a Brauer class $\theta \in \Br(X)$ and a linear exact equivalence
    $$D^b(X, \theta^n) \simeq (D^b(S, \alpha))^{[n]}.$$
\end{corollary}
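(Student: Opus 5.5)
The plan is to identify $X$ with a twisted compactified Jacobian over a K3 surface, apply Theorem \ref{thm:main_hk_intro} with one class trivial, and identify the resulting Brauer class; the right-hand side $(D^b(S,\alpha))^{[n]}$ will come from the standard description of derived categories of Hilbert schemes of points.

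First, I recover the K3 surface. Since $X$ has Picard rank $2$ and $f^*\mathcal O(1)$ is indivisible, the lattice-theoretic results of Markman and Huybrechts--Mattei identify $X$ with a twist $\overline{\Pic}^0_{\beta}(\mathcal C/|L|)$ for some polarized K3 surface $(S,L)$ with $L^2=2n-2$ and some $\beta\in\SBr(S,L)$. One can then show that a general such $(S,L)$ has $\Pic(S)=\bbZ L$, so all curves in $|L|$ are integral. Concretely, I would take $S$ to be the K3 surface whose transcendental data (suitably twisted) match $H^2(X)$, and locate $X$ among the twists $\overline\Pic^0_\beta$ using the description in Section \ref{sec:background_twistedcompact}. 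I expect this identification to be the main obstacle. It needs Markman's result that Lagrangian fibrations of K3$^{[n]}$-type with an indivisible fiber class are Tate--Shafarevich twists of a compactified Jacobian, together with the surjectivity of $\SBr(S,L)\to$ (the relevant twists) from Huybrechts--Mattei.

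Second, I apply Theorem \ref{thm:main_hk_intro} with classes $\beta$ and $\alpha'$, where $\alpha'$ is chosen so that $\overline\Pic^0_{\alpha'}$ is a moduli space of twisted sheaves birational to $\Hilb^n$. The natural choice is the degree-shifted compactified Jacobian $\overline\Pic^{n}(\mathcal C/|L|)$, which is birational to $S^{[n]}$ via $Z\mapsto$ the ideal sheaf restricted to the curve through $Z$. For the twisted version, $\overline\Pic^0_{\alpha'}$ should be a moduli space $M_{\alpha'}$ of $\alpha'$-twisted sheaves on $S$, birational to $S^{[n]}$ twisted appropriately. Theorem \ref{thm:main_hk_intro} gives
$$D^b(X, o_\beta(\alpha')^{-1})\simeq D^b(\overline\Pic^0_{\alpha'}, o_{\alpha'}(\beta)).$$
Using Section \ref{sec:identifying_brauer}, I identify $o_{\alpha'}(\beta)$ on the Hilbert-scheme-like side with the class induced from a Brauer class $\alpha$ on $S$. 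The $\theta^n$ should then appear because the class on $S^{[n]}$ induced from $\alpha\in\Br(S)$ corresponds to $\alpha^{\boxtimes n}$ on $S^n$. On the $X$ side, the class $o_\beta(\alpha')^{-1}$ is an $n$-th power, since the degree-$n$ shift multiplies the obstruction class by $n$; I set $\theta$ to be the corresponding root.

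Third, I pass to the symmetric description. Birational hyperkähler manifolds of K3$^{[n]}$-type are twisted derived equivalent, either through Mukai flops or through the known results for moduli of twisted sheaves, compatibly with the Brauer classes. Hence $D^b(M,\cdot)\simeq D^b(S^{[n]},\alpha^{[n]})$. Finally, the twisted BKR--Haiman equivalence gives $D^b(S^{[n]},\alpha^{[n]})\simeq D^b_{\mathfrak S_n}(S^n,\alpha^{\boxtimes n})=(D^b(S,\alpha))^{[n]}$. Besides the identification in the first step, the bookkeeping of Brauer classes through these steps, in particular showing the twist on $X$ is exactly $\theta^n$, is the delicate point.
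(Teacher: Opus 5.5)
Your proposal is correct and is essentially the paper's argument. The paper gets $X$ birational to $\overline{\Pic}^0_\beta$ from Huybrechts--Mattei, and then applies Corollary \ref{cor:twistedpicequiv}, which is exactly your Steps 2--3: Theorem \ref{thm:main_hk} with the class of $\overline{\Pic}^n$, Proposition \ref{prop:hk_hilbert}, twisted BKR, and $o_\beta(n[\Pic^1])=n\,o_\beta([\Pic^1])$. The result is then transported to $X$ by the twisted D-equivalence for birational hyperk\"ahler manifolds, which is needed because $X$ is only birational to $\overline{\Pic}^0_\beta$, not identified with it.

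Two of your sentences should be tightened. First, integrality of all curves in $|L|$ must hold for the specific $S$ attached to $X$, not for a general $(S,L)$: it follows because $\rho(X)=2$ forces $\rho(S)=1$, and Markman's result makes $L$ primitive. Second, the $n$-th power on the $X$ side comes solely from the homomorphism property of $o_\beta$; the class $\alpha^{\boxtimes n}$ on $S^n$ plays no role there and only enters through twisted BKR on the other side.
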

The proof of Corollary \ref{cor:generallang} relies on \cite[Thm.\ 1.2]{huybrechtsmattei}, see also \cite{markmanlagrangian}, which asserts that there is a complete linear system $\mathcal C \to |L|$ on $S$ such that $X$ is birational to $\Pic^0_{\alpha}(\mathcal C/|L|_{\mathrm{sm}})$. The assumption on the divisibility of $f^* \mathcal O_{\mathbb P^n}(1)$ is needed to ensure the integrality of all members of $|L|$. As the square of the divisibility divides $n-1$, see \cite[Lem.\ 2.5]{markmanlagrangian}, this is automatic if $n-1$ is square-free. Since the integrality of all members of $|L|$ is an open condition, the assumption on the Picard rank can be weakened to a genericity assumption, see Remark \ref{rem:genericity}.

Note that Corollary \ref{cor:generallang} can be viewed as evidence for the following conjecture by Zhang:
\begin{conjecture}[{Cf.\ \cite[Spec.\ 1.1]{zhang}}]\label{conj:zhang}
Let $X$ be a hyperkähler variety of K3$^{[n]}$-type with $n \geq 2$. Then, there exists a K3 category $\mathcal A_X$ such that $X$ is a moduli space of stable objects on $\mathcal A_X$, and a canonical Brauer class $\theta_X \in \Br(X)$ such that there is an exact linear equivalence
$$D^b(X, \theta_X^{\pm n}) \simeq \mathcal A_X^{[n]}.$$
\end{conjecture}
A lattice-theoretic description of the class $\theta_X$ is given in \cite[Sec.\ 2.2]{zhang}.

For more concrete applications, we focus on hyperkähler manifolds of K3$^{[n]}$-type that arise as moduli spaces of Bridgeland-stable objects in Kuznetsov components of cubic fourfolds. Let $Y \subset \mathbb P^5$ be a smooth cubic fourfold. Then, Li, Pertusi and Zhao have shown in \cite{lipertusizhao} that the Fano variety $F(Y)$ of lines on $Y$ is a moduli space of Bridgeland-stable objects on the Kuznetsov component $\mathcal A_Y \coloneqq \langle \mathcal O_Y, \mathcal O_Y(1), \mathcal O_Y(2) \rangle^\perp \subset D^b(Y)$
of $Y$. The following recent result by Kemboi and Segal may thus be interpreted as an instance of Conjecture \ref{conj:zhang}.
\begin{theorem}[{\cite{kemboisegal}}]\label{thm:kemboisegal}
    Let $Y \subset \mathbb P^5$ be a smooth cubic fourfold and let $F(Y)$ denote the Fano variety of lines on $Y$. Then, there is a linear exact equivalence
    $$D^b(F(Y)) \simeq \mathcal A_Y^{[2]},$$
    where $\mathcal A_Y \subset D^b(Y)$ denotes the Kuznetsov component of $Y$.
\end{theorem}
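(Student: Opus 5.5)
This equivalence is quoted from \cite{kemboisegal}, and the argument here is not needed for the rest of the paper. If I were to prove it, I would compare two semiorthogonal decompositions of the derived category of the Hilbert square $Y^{[2]}$. By the derived McKay correspondence of Bridgeland--King--Reid and Haiman, $D^b(Y^{[2]})$ is equivalent to the $\mathfrak S_2$-equivariant category $D^b_{\mathfrak S_2}(Y\times Y)$. This is a step up from $D^b(Y)^{[2]}=D^b_{\mathfrak S_2}(Y\times Y)$ obtained by blowing up the diagonal, and I would use its known description as a blow-up along $\Delta$ with an extra copy of $D^b(Y)$.

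\emph{First decomposition.} Insert $D^b(Y)=\langle \mathcal A_Y,\mathcal O_Y,\mathcal O_Y(1),\mathcal O_Y(2)\rangle$ into the symmetric square. The formalism for symmetric powers of semiorthogonal decompositions then gives a decomposition of $D^b(Y^{[2]})$ with the following pieces:
\begin{itemize}
\item $\mathcal A_Y^{[2]}$;
\item mixed pieces $\mathcal A_Y\otimes\mathcal O_Y(i)$, each a copy of $\mathcal A_Y$;
\item symmetric and exterior squares of the exceptional objects;
\item the diagonal contributions, i.e.\ copies of $D^b(Y)$ twisted by line bundles.
\end{itemize}

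\emph{Second decomposition.} I would use the Galkin--Shinder/Voisin geometry. A generic pair of points $p,q\in Y$ spans a line meeting $Y$ in a residual third point $r$. This gives an identification of $Y^{[2]}$ with the blow-up of a $\mathbb P^4$-bundle $P\to Y$, the directions of lines in $\mathbb P^5$ through $r$. The blow-up is along a codimension-$3$ locus $Z$, a $\mathbb P^1$-bundle over $F(Y)$ consisting of the lines contained in $Y$. Orlov's blow-up formula, the projective bundle formula, and the decomposition of $D^b(Y)$ then yield
\[
D^b(Y^{[2]})=\bigl\langle D^b(F(Y))^{\oplus 4},\ \text{copies of }\mathcal A_Y,\ \text{copies of }D^b(Y)\bigr\rangle,
\]
where $D^b(Z)$ contributes two copies of $D^b(F(Y))$ and appears twice.

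\emph{Comparison.} The plan is to mutate so that both sides share all components except one. On the first side the remaining component should be $\mathcal A_Y^{[2]}$; on the second it should be a single copy of $D^b(F(Y))$. More precisely, the other three copies of $D^b(F(Y))$ would have to be absorbed against mixed pieces and diagonal contributions, using the Li--Pertusi--Zhao description of $F(Y)$ as a moduli space in $\mathcal A_Y$ to produce kernels. A cleaner variant is to realize both categories inside a common ambient category and identify their orthogonal complements. I would first guess explicit Fourier--Mukai kernels from the universal line $\mathcal L\subset F(Y)\times Y$ and the ideal sheaves of lines projected into $\mathcal A_Y$, and check full faithfulness via Bondal--Orlov-type Ext computations.

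\emph{Main obstacle.} I expect the hard step to be the mutation bookkeeping: verifying that the mutated exceptional and $\mathcal A_Y$-pieces on the two sides really coincide as subcategories rather than merely having matching numerical invariants. This is where the fine geometry of $Z\subset P$ and the equivariant structure at the diagonal must be controlled explicitly.
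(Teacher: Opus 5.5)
The paper gives no proof of this statement. It is quoted from Kemboi--Segal, and the paper recalls that the Lagrangian-fibered case is due to Bottini--Huybrechts, via twisted Fourier--Mukai transforms and twisted BKR, the route that Theorem~\ref{thm:main_kuznetsov_component} generalizes. So your sketch has to stand on its own. Comparing two semiorthogonal decompositions of $D^b(Y^{[2]})$ is a sensible strategy, but both of your geometric inputs are wrong.

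\emph{The BKR step.} BKR--Haiman is a statement about surfaces: the Hilbert--Chow morphism $Y^{[2]}\to Y^{(2)}$ is crepant only when $\dim Y=2$. For a fourfold, $D^b(Y)^{[2]}=D^b_{\mathfrak S_2}(Y\times Y)$ embeds into $D^b(Y^{[2]})$ with complement $\dim Y-2=2$ copies of $D^b(Y)$ (Krug--Ploog--Sosna). That is neither zero copies, as your first sentence says, nor one copy, as your second says.

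\emph{The blow-up step.} $Y^{[2]}$ is not the blow-up of $P=\mathbb P_Y(T_{\mathbb P^5}|_Y)$ along $Z$. The residual-point map $Y^{[2]}\dashrightarrow P$ is undefined along the codimension-$2$ locus $P_F$ of subschemes $\xi\subset\ell\subset Y$, which is a $\mathbb P^2=\ell^{[2]}$-bundle over $F(Y)$. Since $\xi\mapsto\langle\xi\rangle$ is a morphism to the Grassmannian, the preimage of $Z$ under any morphism $Y^{[2]}\to P$ would lie in the $6$-dimensional $P_F$. That is too small to be an exceptional divisor. The correct picture is $\mathrm{Bl}_{P_F}Y^{[2]}\cong\mathrm{Bl}_ZP$, a family over $F(Y)$ of flips exchanging $\ell^{[2]}\cong\mathbb P^2$ with $\ell\cong\mathbb P^1$. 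This gives $D^b(Y^{[2]})=\langle D^b(F(Y)),D^b(P)\rangle$ with exactly \emph{one} copy of $D^b(F(Y))$, categorifying $[Y^{[2]}]=[\mathbb P^4][Y]+\mathbb L^2[F(Y)]$.

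Euler characteristics confirm this, since total Hochschild homology is additive. One has $\chi(Y^{[2]})=459$, and indeed $459=5\cdot 27+324$ and $459=405+2\cdot 27$. Your decompositions would instead give $4\cdot 324+135=1431$ on the second side, and $405$ (or $432$) on the first.

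Consequently your comparison step cannot work: it asks to absorb three surplus copies of $D^b(F(Y))$ into mixed and diagonal pieces, but those pieces are copies of $\mathcal A_Y$ (with $\chi=24$) and exceptional objects. With the correct inputs, both decompositions have the shape $\langle\mathcal C,\ \mathcal A_Y^{\oplus 5},\ 15\text{ exceptional objects}\rangle$, with $\mathcal C=\mathcal A_Y^{[2]}$, respectively $\mathcal C=D^b(F(Y))$. On the first side these come from:
\begin{itemize}
\item three mixed copies of $\mathcal A_Y$;
\item nine exceptional objects from $\langle\mathcal O,\mathcal O(1),\mathcal O(2)\rangle^{[2]}$, including the products $\mathcal O(i)\boxtimes\mathcal O(j)$ for $i<j$, which your list omits;
\item the two diagonal copies of $D^b(Y)$.
\end{itemize}
On the second side they come from $D^b(P)=\langle D^b(Y)^{\oplus 5}\rangle$.

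The theorem then amounts to the step you only name as the ``main obstacle''. You must show that, after explicit mutations, the subcategory generated by these five copies of $\mathcal A_Y$ and fifteen exceptional objects is literally the same in both decompositions. For instance, you would need to identify the diagonal copies of $D^b(Y)$, supported on the exceptional divisor $\mathbb P(T_Y)\subset Y^{[2]}$, with specific pieces of $D^b(P)$ transported through the flip. Matching numerical invariants gives nothing here, and guessing kernels from the universal line does not address it. As written, the proposal is an outline built on incorrect input rather than a proof.
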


In \cite{huybrechtsbottini}, Bottini and Huybrechts gave a proof of Theorem \ref{thm:kemboisegal} in the special case where $F(X)$ admits a Lagrangian fibration. With Theorem \ref{thm:main_hk_intro} at hand, we refine their strategy in order to provide strong evidence for Conjecture \ref{conj:zhang}.
\begin{theorem}[{Thm.\ \ref{thm:main_kuznetsov_component}}]\label{thm:main_kuznetsov_component_intro}
    Let $Y$ be a smooth cubic fourfold. Let $v \in \widetilde{H}(\cA_Y, \mathbb Z)$ be a non-zero primitive vector and let $\sigma$ be a $v$-generic stability condition in $\Stab^{\dagger}(\mathcal A_Y)$. Furthermore, assume that the moduli space
    $M_{\sigma}(\mathcal A_Y, v)$
    is of Picard rank 2 and admits a rational Lagrangian fibration such that the corresponding isotropic class in $H^2(X, \mathbb Z)$ is indivisible. Then, there is a Brauer class $\theta \in \Br(M_{\sigma}(\mathcal A_Y, v))$ and an exact linear equivalence
    $$D^b(M_{\sigma}(\mathcal A_Y, v), \theta^n) \simeq \mathcal A_Y^{[n]},$$
    where $2n = v^2+2$ is the dimension of $M_{\sigma}(\mathcal A_Y, v)$.
\end{theorem}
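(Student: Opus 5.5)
We follow the strategy of Bottini--Huybrechts. The idea is to replace $M \coloneqq M_{\sigma}(\mathcal A_Y, v)$ by a Lagrangian fibered twisted compactified Jacobian on a twisted K3 surface. We then match both sides of the claimed equivalence with objects built from that twisted K3, using Theorem \ref{thm:main_hk_intro} together with the fact that derived equivalences are compatible with the $[n]$-construction of categories.

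\textbf{Step 1.} Since $M$ has Picard rank $2$, it carries a rational Lagrangian fibration with indivisible isotropic class, and a birational model $X$ of $M$ is an honest Lagrangian fibration $f\colon X\to\mathbb P^n$. Birational hyperk\"ahler manifolds are derived equivalent, also in the twisted setting via the Brauer group identification $\Br(M)\simeq\Br(X)$. We would deduce this from the existence of a Bridgeland-stability wall-crossing: $X$ is itself a moduli space $M_{\sigma'}(\mathcal A_Y,v)$ for another $v$-generic $\sigma'$. Wall-crossing then provides equivalences $D^b(M,\theta^n)\simeq D^b(X,\theta'^n)$ compatible with the natural Brauer classes, which makes the reduction harmless.

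\textbf{Step 2.} Apply Corollary \ref{cor:generallang}, which rests on \cite{huybrechtsmattei} and Theorem \ref{thm:main_hk_intro}. This gives a twisted K3 surface $(S,\alpha)$, a linear system $|L|$ with integral members, and an identification $X\simeq\overline{\Pic}^0_\alpha(\mathcal C/|L|)$. It also gives a class $\theta$ with
$$D^b(X,\theta^n)\simeq D^b(\overline{\Pic}^d_{\alpha'}(\mathcal C/|L|))\simeq D^b(\Hilb^n(S),\alpha^{[n]}) \simeq D^b(S,\alpha)^{[n]}.$$
Here the middle equivalence uses that a suitable degree-$n$ compactified Jacobian of $\mathcal C/|L|$ is birational to $S^{[n]}$ with twisting induced from $\alpha$, and the last is the twisted derived McKay correspondence (BKR--Haiman).

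\textbf{Step 3.} It remains to show $D^b(S,\alpha)^{[n]}\simeq\mathcal A_Y^{[n]}$. It suffices to produce an equivalence $D^b(S,\alpha)\simeq\mathcal A_Y$, since an equivalence $\Phi$ induces $\Phi^{\boxtimes n}$ on $n$-fold products, which is $\mathfrak S_n$-equivariant and hence passes to $[n]$. For this we use the Hodge theory of $M$: by Addington--Thomas/Huybrechts, $\mathcal A_Y\simeq D^b(S,\alpha)$ holds exactly when $\widetilde H(\mathcal A_Y,\mathbb Z)$ contains a hyperbolic-type/twisted Mukai lattice $\widetilde H(S,\alpha,\mathbb Z)$. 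We would extract this from the Lagrangian fibration, as Bottini--Huybrechts do for $F(Y)$. The isotropic class in $H^2(M,\mathbb Z)\simeq v^\perp$ lifts to an isotropic vector $w\in\widetilde H_{\mathrm{alg}}(\mathcal A_Y)$ with $(v,w)$ controlled by indivisibility. Such a $w$ is the Mukai vector of a K3 category realization, namely the moduli of $\sigma$-stable objects of class $w$. This realizes $(S,\alpha)$ as a two-dimensional moduli space on $\mathcal A_Y$, and the universal twisted family gives the equivalence.

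\textbf{Main obstacle.} The hardest point is consistency in Step 3. The twisted K3 $(S,\alpha)$ obtained from \cite{huybrechtsmattei} out of $X$ must be the same as the one produced from $\mathcal A_Y$ through $w$. We would argue via Markman's description: both recover the same lattice $w^\perp/w$ with its Hodge structure and the same Brauer class, determined by $(v,w)$ mod divisibility. Derived Torelli for twisted K3s then absorbs any remaining discrepancy. Tracking that $\theta^n$, rather than another power, appears throughout is the bookkeeping we expect to be delicate.
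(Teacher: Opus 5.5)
\textbf{Genuine gap: Step 3, the step you call the main obstacle.} Up to that point your outline matches the paper. Huybrechts--Mattei give $M$ birational to $\overline{\Pic}^0_\alpha(S,L)$ with $\Pic(S)=\bbZ L$. Corollary \ref{cor:twistedpicequiv} and twisted BKR then give $\Db(M,\theta^n)\simeq \Db(S,\overline\alpha)^{[n]}$. The isotropic class yields \emph{some} twisted K3 $(S',\alpha')$ with $\cA_Y\simeq\Db(S',\alpha')$ (\cite[Prop.\ 33.1]{blmnps}).

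The comparison of $(S,\overline\alpha)$ with $(S',\alpha')$ is the real content of the proof, and your sketch asserts it rather than proving it. The pair $(S,\alpha)$ of \cite{huybrechtsmattei} is read off from the isotropic class inside Markman's abstract extension $H^2(M,\bbZ)\subset L(M)$. The pair $(S',\alpha')$ is read off from $w^\perp/w\subset\widetilde H(\cA_Y,\bbZ)$. The isometry $H^2(M,\bbZ)\simeq v^\perp\subset\widetilde H(\cA_Y,\bbZ)$ does not show that these two extensions agree, because once $n\ge 4$ the embedding of $H^2$ into a Mukai lattice is in general not determined by lattice theory. So ``both recover the same $w^\perp/w$ and the same Brauer class'' is exactly what has to be shown.

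The missing ingredient is Proposition \ref{prop:markmanlatticegeneral}. For $\sigma\in\Stab^\dagger$ it gives a Hodge isometry $L(M)\simeq\widetilde H(\cC,\bbZ)$ compatible with the embeddings of $H^2(M,\bbZ)$, both for $\cC=\cA_Y$ and for $\cC=\Db(S,\alpha)$. The paper proves it by deforming, via relative stability conditions, to an untwisted K3 surface. There Markman's results identify $\widetilde H$ with $L(M)\simeq Q^4(M,\bbZ)$, and the isometry is transported back. Combined with birational invariance of $L$ (Theorem \ref{thm:mmlattice}(d)), this yields $\widetilde H(\cA_Y,\bbZ)\simeq\widetilde H(S,\overline\alpha,\bbZ)$.

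\textbf{Further points.} Even with that isometry, ``derived Torelli absorbs the discrepancy'' needs care. Twisted derived Torelli requires an \emph{orientation-preserving} Hodge isometry between twisted Mukai lattices. So you must first know that $\cA_Y$ is a twisted K3 category, and then fix the orientation by composing with an orientation-reversing Hodge self-isometry of $\widetilde H(\cA_Y,\bbZ)$; this is Corollary \ref{cor:birimpliesequiv}.

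Another way to close the gap, which is not what you wrote, is to bypass \cite{huybrechtsmattei} altogether. The equivalence $\Db(S',\alpha')\simeq\cA_Y$ coming from $M_\sigma(\cA_Y,w)$ sends $v$ to a rank-zero class. Then $M$ is a moduli space on $(S',\alpha')$ itself and should be birational to a twisted compactified Jacobian there, with indivisibility of $w$ forcing $L'$ to be primitive. This route requires controlling where $\sigma$ is sent and twisted wall-crossing results.

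Finally, in Step 1, wall-crossing does not by itself produce derived equivalences between birational models. The correct input is the twisted D-equivalence for birational hyperk\"ahler manifolds of K3$^{[n]}$-type, \cite[Thm.\ 0.3]{dequivalence}. The detour through a regular Lagrangian fibration is also unnecessary.
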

As with Corollary \ref{cor:generallang}, the divisibility condition is automatic if $n-1$ is square-free. Again, the condition on the Picard rank may be replaced by a genericity condition, see Remark \ref{rem:genericity_2}.

Let us conclude this introduction by giving a short overview of the structure of this note.
In Section \ref{sec:background}, we recall the results of Arinkin \cite{arinkin} and description of the Tate--Shafarevich group of Lagrangian-fibered Hyperkähler manifolds of K3$^{[n]}$-type by Huybrechts and Mattei \cite{huybrechtsmattei}. Then, in Section \ref{sec:twistedpc} we show that Arinkin's sheaf descends to a twisted sheaf on certain torsors. In Section \ref{sec:equiv}, we conclude that the resulting twisted sheaf induces a Fourier--Mukai equivalence. In Section \ref{sec:identifying_brauer}, we specialize to the twisted compactified Picard varieties studied by \cite{huybrechtsmattei} and identify certain Brauer classes we constructed in Section \ref{sec:twistedpc} with Brauer classes arising from moduli theory. Combining the above, we conclude the proof of Theorem \ref{thm:main_hk_intro} in Section \ref{sec:conclusion_hk}. After a brief interlude on Markman--Mukai lattices in Section \ref{sec:markmanmukai}, we prove Theorem \ref{thm:main_kuznetsov_component_intro} in Section \ref{sec:kuznetsovcomponent}.

\subsection{Acknowledgments}
We would like to thank the organizers of the Bootcamp for the 2025 Algebraic Geometry Summer Research Institute, where the ideas for this project first began. Special thanks go to our group's mentor, Laura Pertusi, as well as its members Amal Mattoo, Lily McBeath, Ana Pavlakovi\'c, Weite Pi, Yu Shen, and Nicol\'as Vilches. We would also like to thank Dominique Mattei and Reinder Meinsma for reading a preliminary version of this draft, as well as Daniel Huybrechts and Alex Perry for many useful discussions. The first author wants to thank Alessio Bottini for his interest in this project.

The first author is grateful for the support provided by the ERC Synergy Grant 854361 HyperK as well as the International Max Planck Research School on Moduli Spaces at the Max Planck Institute for Mathematics in Bonn. The second author is grateful for the support provided by the National Science Foundation under NSF grant DMS-2143271 and NSF RTG grant DMS-1840234.

\subsection{Conventions} If not mentioned otherwise, we work over $\mathbb C$ and with the \'etale topology and cohomology groups.   For a morphism $X \to B$, we let $\mathcal X \in \Sh_{\mathrm{\acute{e}t}}(B)$ denote the corresponding \'etale sheaf of sections. If $\alpha \in \SBr(X)$ is a special Brauer class, cf. Section \ref{sec:background_twistedcompact}, we let $\overline{\alpha} \in \Br(X)$ denote the image of $\alpha$ under the natural map $\SBr(X) \to \Br(X)$.
\section{Background}
\label{sec:background}

\subsection{Good abelian fibrations and Arinkin sheaves}
The aim of this section is to recall the properties of Arinkin's extension of the Poincar\'e line bundle in \cite{arinkin} that allow us to generalize Mukai's equivalence to the twisted setting.
\begin{definition}
    Let $\pi \colon X \to B$ be a morphism of smooth quasi-projective varieties and $X^0 \subset X$ the complement of the singular locus of fibers. The morphism $\pi \colon X \to B$ is a \emph{good abelian fibration} if
    \begin{enumerate}
        \item The morphism $\pi$ is projective with geometrically connected fibers;
        \item The restriction $\pi|_{X^0} \colon X^0 \to B$ is an abelian group scheme.
        \item The group law $\mu_{X^0} \colon X^0 \times_B X^0 \to X^0$ extends to a group action $\mu_X \colon X^0 \times_B X \to X$.
    \end{enumerate}
     Let $0_X \colon B \to X^0$ denote the zero section of the abelian group scheme $\pi|_{X^0} \colon X^0 \to B$.
\end{definition}
In particular, the generic fiber of a good abelian fibration is an abelian variety. Note that if $s \colon B \to X$ is a section of a good abelian fibration, then we have $s(B) \subset X^0$ since both $X$ and $B$ are smooth. We let $\transl_{s} \coloneqq \mu_X(s, -) \colon X \to X$ denote translation by $s$.

It is clear from the definition that any abelian scheme is a good abelian fibration. The following example is the most important for the applications towards the end of this note.
\begin{theorem}
    If $\mathcal C \to B$ is a flat family of integral curves with only planar singularities and smooth total space, then the Altman--Kleiman compactification of the relative Jacobian
    $$\pi \colon \overline{\Pic}^0_{\mathcal C / B} \to B$$ is a good abelian fibration.
\end{theorem}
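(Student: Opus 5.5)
We verify the three conditions of a good abelian fibration one by one, using the standard theory of compactified Jacobians of integral curves with planar singularities (Altman--Kleiman, Altman--Iarrobino--Kleiman, Arinkin). First we recall that $\overline{\Pic}^0_{\mathcal C/B}$ is the moduli space of rank one torsion free sheaves of degree $0$ on the fibers of $\mathcal C \to B$. Since the fibers are integral, every such sheaf is stable. Altman--Kleiman show that the relative compactified Jacobian exists as a scheme projective over $B$; for this one may need a section of $\mathcal C \to B$ étale locally, or work with the coarse space. This gives projectivity in (i).

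For geometric connectedness, and in fact irreducibility, of the fibers we use the result of Altman--Iarrobino--Kleiman (and Rego) for integral curves with planar singularities. It says that $\overline{\Pic}^0(C)$ is an integral local complete intersection of dimension equal to the arithmetic genus $g$, and that the locus of line bundles $\Pic^0(C)$ is dense in it. It follows that $\pi$ is flat, since it is equidimensional with lci fibers over a smooth base.

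Next comes smoothness of the total space $\overline{\Pic}^0_{\mathcal C/B}$, which the statement does not assume. I would first try to deduce it from the smoothness of $\mathcal C$ via deformation theory. The obstruction space for the pair (curve, sheaf $F$) is controlled by $\mathrm{Ext}^2$-type groups on the surface-like total space. More precisely, the tangent space of the relative moduli problem at $F$ surjects onto $T_bB$ exactly when the family of curves is locally versal for the singularities. Smoothness of $\mathcal C$ with planar singularities gives exactly this versality of the induced deformations, as in Fantechi--Göttsche--van Straten and Arinkin's ``smoothness of the total space'' lemma. I expect this step to be the main obstacle, and I would simply cite Arinkin's argument (the total space is smooth if the family is locally versal, which follows from smoothness of $\mathcal C$ when $\dim B \ge$ the relevant codimensions). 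If that fails I would add it as a hypothesis, as is done in Arinkin's paper.

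For (ii), the smooth locus of the fibers $X^0$ equals the open subset $\Pic^0_{\mathcal C/B}$ of line bundles. Indeed, at a non-locally-free sheaf the fiber is singular, while at line bundles the fiber is the smooth group $\Pic^0(C_b)$. This $\Pic^0_{\mathcal C/B}$ is a commutative group scheme under tensor product, smooth over $B$, with zero section $\mathcal O$. The paper calls this an ``abelian group scheme'', meaning a commutative group scheme; it need not be proper over singular curves. For (iii), the action $X^0\times_B X\to X$ is given by $(L,F)\mapsto L\otimes F$. Tensoring a rank one torsion free sheaf of degree $0$ with a degree $0$ line bundle is again rank one torsion free of degree $0$, and this construction works in families. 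It therefore defines a morphism by the moduli property, extending the group law on $X^0$.
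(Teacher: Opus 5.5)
\textbf{Gap: the smoothness step.} Your checks of (i)--(iii) follow the paper's proof. Altman--Kleiman gives projectivity, Altman--Iarrobino--Kleiman gives irreducible (hence geometrically connected) fibers, the fiberwise smooth locus is $\Pic^0_{\mathcal C/B}$, and tensor product gives the action. One caveat: ``at a non-locally-free sheaf the fiber is singular'' is a genuine theorem that uses planarity. The paper cites Melo--Rapagnetta--Viviani, Thm.~A(ii), for it, so it should be cited rather than asserted with ``Indeed''. The real problem is the step you add, deducing smoothness of the total space of $\overline{\Pic}^0_{\mathcal C/B}$ from smoothness of $\mathcal C$. That step would fail.

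The implication ``$\mathcal C$ smooth $\Rightarrow$ family locally versal'' is false. Suppose that near a singular point $p$ of a fiber the family is $F(x,y,b)=0$ with $F(x,y,b_0)=f$. Smoothness of $\mathcal C$ at $p$ only says that $\partial F/\partial b_j(p)\neq 0$ for some $j$. Equivalently, the Kodaira--Spencer image in the Tjurina algebra $T^1_p$ contains a unit. Local versality instead requires $T_{b_0}B\to\bigoplus_p T^1_p$ to be surjective, jointly over all singular points of the fiber.

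Here is a concrete counterexample. Let $B$ be a smooth curve and $C_0$ an integral fiber with two nodes, at which $\mathcal C$ is locally $x_iy_i=t$. Then $\mathcal C$ is smooth. Now take a degree-$0$ sheaf $F$ that is not locally free at either node, such as a pushforward from the partial normalization. By the Fantechi--G\"ottsche--van Straten local description, $\overline{\Pic}^0_{\mathcal C/B}$ is formally $\{u_1v_1=t=u_2v_2\}\times(\text{smooth})$ at $F$. That is a threefold ordinary double point $u_1v_1=u_2v_2$ times a smooth factor, so the total space is singular. Enlarging the base does not help: pulling back along $B\times\mathbb A^k\to B$ keeps $\mathcal C$ smooth and $\overline{\Pic}^0$ singular.

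So this step cannot be carried out. Smoothness of $\overline{\Pic}^0_{\mathcal C/B}$, or local versality of the family, has to be imposed as a hypothesis, which is your fallback and the right fix. This is also how the paper effectively proceeds. Smoothness of $X$ is a standing assumption in its definition of a good abelian fibration, and its proof only checks (1)--(3). In its application, $\overline{\Pic}^0(\mathcal C/|L|)$ is smooth for independent reasons, as a moduli space of sheaves on a K3 surface.

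A smaller point: your flatness argument via miracle flatness needs the total space to be Cohen--Macaulay, so it also rests on this unproved smoothness. Flatness is not required by the definition in any case.
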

\begin{proof}
    The first property follows from \cite[Thm.\ (9)]{aik} and \cite[Thm.\ 8.5]{ak}. The second comes from the identification of the smooth locus with the Jacobian $\Pic^0_{\cC/B} \to B$ \cite[Thm.\ A(ii)]{mrvsmoothlocus}. The third property holds as the group law on $\Pic^0_{\cC/B}$ corresponds to the tensor product of line bundles, which naturally extends to an action $\Pic^0_{\cC/B}\times_B \overline{\Pic}^0_{\cC/B} \to \overline{\Pic}^0_{\cC/B}$.
\end{proof}

\begin{definition}
    Let $\pi_X \colon X \to B$ and $\pi_Y \colon Y \to B$ be good abelian fibrations over the same base $B$. Then, an \emph{Arinkin sheaf} on $X \times Y$ is the following datum:
    \begin{enumerate}
        \item A sheaf $\mathcal P$ on $X \times Y$ with support on $X \times_B Y$.
        \item \emph{(Normalization)} Isomorphisms
              $$\mathrm{triv}_{X} \colon (\mathrm{id}_{X} \times 0_{Y})^* \mathcal P \congpf \mathcal O_{X}\text{ and } \mathrm{triv}_{Y} \colon (0_X \times \mathrm{id}_{Y})^* \mathcal P \congpf \mathcal O_{Y}$$
        \item \emph{(Theorem of the square)} Isomorphisms
              $$\mathrm{sq}_{X} \colon (\mu \times \mathrm{id}_{Y})^* \cP\congpf \mathrm{pr}_{13}^* \cP \otimes \mathrm{pr}_{23}^* \cP$$
              on $X^0 \times X \times Y$ and
              $$\mathrm{sq}_{Y} \colon (\mathrm{id}_{Y} \times \mu)^* \cP\congpf \mathrm{pr}_{12}^* \cP \otimes \mathrm{pr}_{13}^* \cP$$
              on $X \times {Y}^0 \times Y$
        \item If $U \to B$ is \'etale and $s \colon U \to X|_U$ and $t \colon U \to Y|_U$ are sections, then $(s \times \mathrm{id}_Y)^*\cP$ is a line bundle on $U \times_B Y$ and $(\mathrm{id}_X \times t)^* \cP$ is a line bundle on $X \times_B U$.
        \item The diagrams
              $$\begin{tikzcd}
                      \mathcal O_{X \times X} \arrow[r] \arrow[d, "\mathrm{id}"] & {\mathrm{pr}_{1}^*(\mathrm{id}_X} \times 0_{Y})^*\mathcal P \otimes \mathrm{pr}_{2}^* (\mathrm{id_X} \times 0_{Y})^*\mathcal P \arrow[d] \\
                      \mathcal O_{X \times X} \arrow[r]  & (\mu_X \times 0_{Y})^* \mathcal P,
                  \end{tikzcd}$$and$$\begin{tikzcd}
                      \mathcal O_{Y \times Y} \arrow[r] \arrow[d, "\mathrm{id}"] & {\mathrm{pr}_{1}^*(0_X \times \mathrm{id}_{Y})\mathcal P \otimes \mathrm{pr}_{2}^* (0_X \times \mathrm{id}_{Y})^*\mathcal P} \arrow[d] \\
                      \mathcal O_{Y \times Y} \arrow[r]  & (0_X \times \mu_{Y})^* \mathcal P,
                  \end{tikzcd}$$
              induced by the normalization and 'theorem of the square' isomorphisms commute.
        \item The induced Fourier--Mukai transform $$\Phi_\cP \colon \Db(X) \to \Db(Y)$$ is a $B$-linear equivalence of categories.
    \end{enumerate}
\end{definition}

If $A \to B$ is an abelian scheme, then the Poincar\'e sheaf on $A \times \Pic^0_{A/B}$ is an Arinkin sheaf, see e.g., \cite[Ch.\ 16]{thebook2}. The motivation for the definition is the following:
\begin{theorem}[Arinkin]
    If $\mathcal C \to B$ is  flat family of integral curves with only planar singularities, then there is an Arinkin sheaf $\mathcal P$ on $\overline{\Pic}_{\mathcal C / B}^0 \times \overline{\Pic}^0_{\mathcal C / B}$.
\end{theorem}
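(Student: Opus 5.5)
This is Arinkin's theorem, and the plan is to assemble it from \cite{arinkin} rather than reprove it. Set $J \coloneqq \overline{\Pic}^0_{\mathcal C/B}$ and $J^0 \coloneqq \Pic^0_{\mathcal C/B}$. By the theorem above, $J \to B$ is a good abelian fibration with $J^0$ as the complement of the singular locus of fibers.

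\textbf{Constructing $\mathcal P$.} On $J^0 \times_B J \cup J \times_B J^0$ there is the Poincar\'e line bundle $\mathcal P^0$. It is constructed from the universal sheaf $\mathcal F$ on $\mathcal C \times_B J$ as the determinant of cohomology
\[
\mathcal P^0 = \det R p_*(\mathcal F \boxtimes \mathcal L)\otimes \det Rp_*(\mathcal F)^{-1}\otimes \det Rp_*(\mathcal L)^{-1}\otimes \det Rp_*\mathcal O .
\]
The universal sheaf exists \'etale locally on $B$, and this normalized determinant does not depend on the choice of universal sheaf, so $\mathcal P^0$ descends. Arinkin's theorem (\cite[Thm.\ A/B]{arinkin}) then says that $j_*\mathcal P^0$ is a maximal Cohen--Macaulay sheaf $\mathcal P$ on $J\times_B J$, where $j$ is the inclusion of the open set. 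The key input is that the complement has codimension $\ge 2$. This is where planarity is used, through the codimension estimates for the loci of non-line-bundles (Severi varieties), and it is the main obstacle. Here I simply invoke Arinkin's work. The case of general $B$ is reduced to the versal family, or handled by base change of Cohen--Macaulay sheaves as in Arinkin.

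\textbf{Normalization and condition (iv).} Restricting to $J\times 0$ or $0\times J$ lands inside the open set where $\mathcal P=\mathcal P^0$. There the normalization of $\mathcal P^0$ gives $\mathrm{triv}_X$ and $\mathrm{triv}_Y$. If $s$ is an \'etale-local section, then $s$ lands in $J^0$, because the total spaces are smooth. Hence $(s\times\mathrm{id})^*\mathcal P$ is the restriction of $\mathcal P^0$ to $s(U)\times_B J$, which is a line bundle. The same argument applied to sections $t$ gives the rest of condition (iv).

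\textbf{Theorem of the square and compatibility (v).} On $J^0\times J\times J$ the sheaves $(\mu\times\mathrm{id})^*\mathcal P$ and $\mathrm{pr}_{13}^*\mathcal P\otimes \mathrm{pr}_{23}^*\mathcal P$ are both maximal Cohen--Macaulay. The first is a pullback along a smooth map, and the second is a twist of $\mathcal P$ by a line bundle. Both are therefore reflexive on the support, and it suffices to build the isomorphism away from codimension $2$. There it follows from the multiplicativity of the determinant of cohomology under tensoring by line bundles. The commutativity of the diagrams in (v) is checked on the dense open locus over $J^0$, where everything is a line bundle, and this suffices because the sheaves are torsion free.

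\textbf{Equivalence (vi).} Finally, $\Phi_{\mathcal P}$ is an equivalence by \cite[Thm.\ C]{arinkin}. Its convolution with the transpose kernel (up to $[\dim]$ and $-1$) is the identity. It is $B$-linear because $\mathcal P$ is supported on $J\times_B J$.
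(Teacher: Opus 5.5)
Your proposal is correct and follows the paper's route. Construction, Cohen--Macaulayness and the equivalence (vi) are taken from Arinkin, while normalization, (iv) and (v) come from all sections, including the zero section, landing in $J^0$, where $\mathcal P$ is the Poincar\'e line bundle. The paper simply calls the theorem of the square immediate from Arinkin. You instead justify it, validly, by extending the determinant-of-cohomology isomorphism across the codimension-$2$ complement, using the $S_2$ property of the two Cohen--Macaulay sheaves on $J^0\times_B J\times_B J$.
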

\begin{proof}
    Properties (1), (3) and (6) are immediate by \cite[Thm.\ A and C]{arinkin}. Since any section of ${\overline{\Pic}_{\cC/B}^0}$ lies in the smooth locus (in particular, the identity section as well), (2), (4) and (5) follow from the fiberwise description of Arinkin's sheaf.
\end{proof}

If $X$ and $Y$ are good abelian fibrations over $B$ and $\mathcal P$ is an Arinkin sheaf on $X \times Y$, then there is a morphism
\begin{align*}
    \mathcal X            & \to \mathcal Pic_{Y / B}                             \\
    (s \colon U \to X|_U) & \mapsto (s \times \mathrm{id})^*\cP \in \Pic(Y|_{U})
\end{align*}
of \'etale sheaves, which generalizes the natural morphism
\begin{align*}\check{\mathcal A} \simeq \mathcal Pic^0_{A/B} \hookrightarrow \mathcal Pic_{A/B}\end{align*}
for an abelian scheme $A \to B$.

Let us conclude this section by highlighting consequences of the theorem of the square, which will be used frequently in the remainder of this note.
\begin{lemma}\label{lem:square_identities}
    Let $X \to B$ and $Y \to B$ denote good abelian fibrations equipped with an Arinkin sheaf $\cP$. Furthermore, let $s, s' \colon B \to X$ and $t \colon B \to X$ denote sections. Then we have the following identities:
    \[
    \begin{aligned}
        ((s + s') \times \mathrm{id})^* \cP    & \simeq(s \times \mathrm{id})^* \cP \otimes (s' \times \mathrm{id})^* \cP                                                       \\
        (ns \times t)^* \cP                  & \simeq((s \times t)^* \cP)^{\otimes n}                                   \simeq(s \times nt)^* \cP.                           \\
        (\transl_s \times \mathrm{id})^* \cP & \simeq\cP \otimes \pr_2^*(s \times \mathrm{id})^* \cP .                                                                       \\
        (\transl_s \times \transl_t)^* \cP   & \simeq\cP \otimes \pr_1^*(\mathrm{id} \times t)^* \cP  \otimes \pr_2^*(s \times \mathrm{id})^* \cP \otimes \pi_B^*(s, t)^* \cP.
    \end{aligned}
    \]
\end{lemma}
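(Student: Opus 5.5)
The plan is to obtain all four identities by pulling back the theorem of the square isomorphisms $\mathrm{sq}_X$ and $\mathrm{sq}_Y$ along suitable maps built from the sections. Since sections of a good abelian fibration land in the smooth loci $X^0$ and $Y^0$, all such pullbacks are defined. (We read $t$ as a section of $Y$; this is the only reading under which $(s\times t)^*\cP$ makes sense.)

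For the first identity, pull back $\mathrm{sq}_X$ on $X^0\times X\times Y$ along
$$(s,s',\mathrm{id}_Y)\colon Y \to X^0\times X\times Y.$$
The left side becomes $((s+s')\times \mathrm{id})^*\cP$. The right side becomes $(s\times\mathrm{id})^*\cP\otimes(s'\times\mathrm{id})^*\cP$, since $\mathrm{pr}_{13}\circ(s,s',\mathrm{id})=(s,\mathrm{id})$ and $\mathrm{pr}_{23}\circ(s,s',\mathrm{id})=(s',\mathrm{id})$.

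For the second identity, induct on $n\geq 1$ using the first identity, and then pull back along $t\colon B\to Y$. This gives
$$(ns\times t)^*\cP\simeq((s\times t)^*\cP)^{\otimes n}.$$
The symmetric argument with $\mathrm{sq}_Y$ gives the isomorphism with $(s\times nt)^*\cP$. For $n=0$ one uses the normalization isomorphisms. Negative $n$ follows from $(s+(-s))=0$, which shows $(-s\times\mathrm{id})^*\cP$ is inverse to $(s\times\mathrm{id})^*\cP$.

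For the third identity, pull back $\mathrm{sq}_X$ along
$$(s\circ\pi_X)\times\mathrm{id}_X\times\mathrm{id}_Y\colon X\times Y\to X^0\times X\times Y,$$
where the product on the source is really the fiber product relevant on the support of $\cP$. Then $\mu\circ(s\pi,\mathrm{id})=\transl_s$, so the left side becomes $(\transl_s\times\mathrm{id})^*\cP$. On the right, $\mathrm{pr}_{23}$ gives $\cP$, and $\mathrm{pr}_{13}$ gives $(s\pi_X\times\mathrm{id}_Y)^*\cP$. On $X\times_B Y$ this last term agrees with $\pr_2^*(s\times\mathrm{id})^*\cP$, because $\pi_X$ and $\pi_Y$ agree there. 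Since $\cP$ is supported on $X\times_BY$, the isomorphism extends to $X\times Y$. The small technical point here is that $s\pi_X$ only matches $s\pi_Y$ on the fiber product; this is handled by working on $X\times_B Y$ throughout, which is legitimate by property (1).

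For the fourth identity:
\begin{enumerate}
\item Apply the third identity, so that $(\transl_s\times\transl_t)^*\cP\simeq(\mathrm{id}\times\transl_t)^*\cP\otimes\pr_2^*\transl_t^*(s\times\mathrm{id})^*\cP$.
\item Apply the $Y$-analogue of the third identity, $(\mathrm{id}\times\transl_t)^*\cP\simeq\cP\otimes\pr_1^*(\mathrm{id}\times t)^*\cP$.
\item Compute $\transl_t^*(s\times\mathrm{id})^*\cP$ by restricting the $Y$-version of the third identity along $s$. This gives $(s\times\mathrm{id})^*\cP\otimes\pi_Y^*(s\times t)^*\cP$.
\end{enumerate}
Pulled back via $\pr_2$, the last factor becomes $\pi_B^*(s,t)^*\cP$, and combining the three steps yields the claim.

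The main obstacle is purely bookkeeping. One must check that every map used lands in the domain of definition of $\mathrm{sq}_X$ or $\mathrm{sq}_Y$, namely that the first factor lies in $X^0$ (resp. the middle factor in $Y^0$). One must also be careful about identifying pullbacks along $B$-valued compositions on the fiber product. Compatibility of the isomorphisms (property (5)) is not needed, since only the existence of isomorphisms is claimed.
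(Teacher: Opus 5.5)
Your proposal is correct and is essentially the argument the paper has in mind. The paper gives no separate proof and presents the four identities as direct consequences of the normalization and theorem-of-the-square isomorphisms; your derivation of the fourth identity from the third and its $Y$-analogue is the computation the paper carries out in the proof of Lemma~\ref{lem:trtrP1}, with the two translations applied in the opposite order, and you are right to read $t$ as a section of $Y$.
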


\subsection{Twisted sheaves}
\label{sec:gerbe} In this section, we recall the description of twisted sheaves that will be used throughout this article. Let $\pi \colon X \to B$ be a good abelian fibration. Fix an \'etale cover $\{U_i\}$ of $B$ and sections $s_{ij} \colon U_{ij} \coloneqq U_i \cap U_j \to X \times_B {U_{ij}}$, defining a torsor $s \coloneqq \{s_{ij}\} \in H^1(B, \mathcal X)$. Let $$\pi_s \colon X_s \to B$$ denote the corresponding twist and $\varphi_{ij} = \tau(s_{ij}, -) \colon X \times_B U_{ij} \to X \times_B U_{ji}$ the induced transition maps.\footnote{Note that $X_s$ is not necessarily a scheme, but only an algebraic space. However, in the applications discussed from Section \ref{sec:identifying_brauer} onwards, all twists we consider are actual schemes.}

The Leray spectral sequence for $\pi_s$ and $\mathbb G_m$ induces an exact sequence
$$\Br(B) \to F^1\Br(X_s) \to H^1(B, \mathcal Pic_{X_s / B}) \to H^3(B, \mathbb G_m),$$
where
$$F^1 \Br(X_s) \coloneqq \ker(\Br(X_s) \to H^0(B, R^2(\pi_s)_* \mathbb G_m)$$
is the subgroup of fiber-wise trivial Brauer classes.

\begin{proposition}
    \label{prop:gerbedesc}
    A collection of line bundles $L_{ij} \in \Pic(X \times_B U_{ij})$ and
    \begin{enumerate}
        \item[\rm{(C1)}] nowhere vanishing sections $\theta_{ij} \in H^0(X \times_B {U_{ij}}, L_{ij} \otimes \varphi_{ij}^* L_{ji})$,
        \item[\rm{(C2)}] nowhere vanishing sections $\theta_{ijk} \in H^0(X \times_B {U_{ijk}}, L_{ik} \otimes \varphi_{ij}^* L_{jk} \otimes \varphi_{ik}^* L_{ik}),$
        \item[\rm{(C3)}] satisfying the cocycle condition
              $$\prod_{a, b, c} \varphi^*_{ia} \theta_{abc} = \prod_{a,b} \varphi_{ia}^* \theta_{ab} \in H^0(X \times_B {U_{ijkl}}, \bigotimes_{a,b} \varphi^*_{ia} L_{ab})$$
    \end{enumerate}
    induces a gerbe on $X_s$ with associated Brauer class $$\alpha \in F^1 \Br(X_s) = \ker(\Br(X_s) \to H^0(B, R^2\pi_{s*} \mathbb G_m)).$$
    Furthermore, a collection of sheaves $\mathcal F_i$ on $X \times_B U_{i}$ together with isomorphisms
    $$\psi_{ij} \colon \mathcal F_i|_{X \times_B U_{ij}} \congpf \varphi^*_{ij} \mathcal F_j|_{X \times_B U_{ji}}\otimes L_{ij}$$ satisfying the obvious cocycle conditions compatible with $\theta_{ij}$ and $\theta_{ijk}$ describes an $\alpha$-twisted sheaf on $X_s$.
\end{proposition}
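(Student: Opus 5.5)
The plan is to realise the data as an explicit $\mathbb G_m$-gerbe over $X_s$, glued from trivial gerbes on the pieces $X\times_B U_i$, and then read off its class. Recall that $X_s$ is obtained by gluing the $X_i \coloneqq X\times_B U_i$ along the translations $\varphi_{ij}$. Over each $X_i$ I take the trivial gerbe $B\mathbb G_m \times X_i$, whose objects are line bundles (or $\mathbb G_m$-torsors). Over $X_{ij}$ the gluing $1$-morphism is
\[
M \longmapsto \varphi_{ij}^*M \otimes L_{ij}.
\]
The sections $\theta_{ij}$ of (C1) give the inverses: composing the gluing from $i$ to $j$ with the one back from $j$ to $i$ yields tensoring with $L_{ij}\otimes\varphi_{ij}^*L_{ji}$, and $\theta_{ij}$ trivialises this bundle. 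The sections $\theta_{ijk}$ of (C2) give the $2$-isomorphisms between the composite of the $ij$- and $jk$-gluings and the $ik$-gluing, since the line bundle measuring the discrepancy is exactly the one in (C2). I would use that $\varphi_{ij}\circ\varphi_{jk}=\varphi_{ik}$ on overlaps, because $s$ is a cocycle, and the theorem-of-the-square-type identifications are then just equalities of pullbacks. Condition (C3) is precisely the tetrahedron (pentagon) coherence for these $2$-isomorphisms on quadruple overlaps. By the standard descent for stacks of gerbes (Giraud), these data glue to a $\mathbb G_m$-gerbe $\mathcal G$ on $X_s$, giving a class $\alpha\in H^2(X_s,\mathbb G_m)$.

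Next I would check that $\alpha$ lies in the Brauer group and in $F^1$. In the étale Čech description, after refining the cover of $X_s$ so that each $L_{ij}$ is trivialised, the class is given by a Čech $2$-cocycle built from the $\theta_{ijk}$ and the local trivialisations. For torsion, one can observe that the stack of $\mathcal G$-twisted vector bundles is locally nonempty. More directly, since $X_s$ is smooth and quasi-projective, $H^2(X_s,\mathbb G_m)$ is torsion, and Gabber/de Jong gives $\Br=\Br'$. Fibrewise triviality should be clear: restricted to $X_s\times_B U_i\cong X_i$, the gerbe is trivial by construction, so its image in $H^0(B,R^2\pi_{s*}\mathbb G_m)$ vanishes étale-locally on $B$, hence vanishes.

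The second statement follows from the same descent. A $\mathcal G$-twisted sheaf is an object of the glued stack of twisted sheaves. On $X_i$ this is an ordinary sheaf $\mathcal F_i$. The gluing isomorphisms are the $\psi_{ij}$, landing in the transform $\varphi_{ij}^*\mathcal F_j\otimes L_{ij}$ of $\mathcal F_j$. The compatibility with $\theta_{ij}$ and $\theta_{ijk}$ is exactly what makes the $\psi_{ij}$ a $1$-cocycle relative to the $2$-isomorphisms of the gerbe, so descent gives an $\alpha$-twisted sheaf on $X_s$.

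The main obstacle is bookkeeping rather than conceptual. I have to verify that (C3), with its products of pullbacks $\varphi_{ia}^*$, matches the coherence condition, with correct signs and orientations. I would carry this out by converting everything to a Čech $2$-cocycle of $\mathbb G_m$ on a refined cover of $X_s$ and checking that the cocycle identity reduces to (C3).
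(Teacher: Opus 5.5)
Your proposal is essentially the paper's argument. The paper's proof is a one-line appeal to Hitchin's description of gerbes by line bundles on overlaps, and your construction is that description made explicit: you glue trivial $\mathbb G_m$-gerbes on $X\times_B U_i$ along $M\mapsto\varphi_{ij}^*M\otimes L_{ij}$, take the $2$-isomorphisms from the $\theta_{ijk}$, and get coherence from (C3). Your discrepancy bundle $L_{ij}\otimes\varphi_{ij}^*L_{jk}\otimes L_{ik}^{\vee}$ becomes $L_{ij}\otimes\varphi_{ij}^*L_{jk}\otimes\varphi_{ik}^*L_{ki}$ via $\theta_{ik}$. That is what (C2) should read (the printed version has a typo), and it is the form used in Lemma~\ref{lem:alphagerbe}.

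The one step to drop is the Azumaya discussion. $X_s$ need not be quasi-projective, or even a scheme (see the footnote in Section~\ref{sec:gerbe}), so Gabber--de Jong is not available in general. Also, local existence of twisted bundles holds for every gerbe, so it says nothing about torsion. What your construction actually yields is a class in $H^2(X_s,\mathbb G_m)$, and that is all the paper uses (cf.\ Lemma~\ref{lem:alphagerbe}).
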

\begin{proof}
    This is a special case of the description of gerbes \'a la Hitchin \cite{hitchin}.
\end{proof}
The map $F^1 \Br(X_s) \to H^1(B, \mathcal Pic_{X_s/B})$ induced by the Leray spectral sequence admits the following description:
\begin{lemma}
    Let $\{L_{ij}\}$ be a collection of line bundles satisfying the conditions of Proposition \ref{prop:gerbedesc}. Let $\alpha \in F^1 \Br(X_s)$ denote the corresponding Brauer class. The image of $\alpha$ via
    $$F^1 \Br(X_s) \to H^1(B, \mathcal Pic_{X_s/B})$$
    is represented by the cocycle $\{[L_{ij}] \in \Pic(X_s \times_B U_{ij}) / \Pic(U_{ij})\}$.
\end{lemma}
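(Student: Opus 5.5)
The plan is to compute the Leray differential explicitly via Čech cohomology: represent $\alpha$ by a Čech 2-cocycle on an étale hypercover of $X_s$ refining the pullback of $\{U_i\}$, and then trace the edge map $F^1\Br(X_s)\to H^1(B,R^1\pi_{s*}\mathbb G_m)=H^1(B,\mathcal Pic_{X_s/B})$. Recall that on the $E_2$-page this map is computed as follows. A fiberwise-trivial class $\alpha$ becomes trivial on each $X_s\times_B U_i$, after possibly refining the cover. The gerbe restricted to $X_s\times_B U_i$ therefore admits a trivialization, that is, a twisted line bundle. On overlaps $U_{ij}$, two trivializations differ by a line bundle on $X_s\times_B U_{ij}$, well defined up to pullbacks from $U_{ij}$. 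The classes of these line bundles in $\Pic/\Pic(U_{ij})$ form a 1-cocycle with values in $\mathcal Pic_{X_s/B}$ whose class is the image of $\alpha$. I will take this description of the Leray edge map as the standard characterization, via the comparison of Čech and derived-functor spectral sequences, and use it to identify the image of $\alpha$.

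The next step is to make this explicit for the Hitchin-style gerbe of Proposition \ref{prop:gerbedesc}. Over $X\times_B U_i\simeq X_s\times_B U_i$, the gerbe has a canonical trivialization: an object is given by the collection $\mathcal F_i=\mathcal O$ with gluing data, or equivalently a rank one twisted sheaf built locally. By the second half of Proposition \ref{prop:gerbedesc}, a twisted sheaf is a collection $\mathcal F_i$ with $\psi_{ij}\colon \mathcal F_i\simeq \varphi_{ij}^*\mathcal F_j\otimes L_{ij}$. Restrict to the preimage of $U_i$ and take $\mathcal F_i=\mathcal O$ as the local trivializing object there. Comparing the local trivialization over $U_i$ with the one over $U_j$ on $U_{ij}$ shows that they differ exactly by $L_{ij}$, or by its inverse depending on conventions, which does not affect the cohomology class up to the sign convention fixed in the identification. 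Here the identification $X\times_B U_{ij}\simeq X_s\times_B U_{ij}$ is taken via the chart on $U_i$.

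What remains is to check that the cocycle condition of $\{[L_{ij}]\}$ modulo $\Pic(U_{ij})$ is indeed satisfied; this follows from (C2), since $\theta_{ijk}$ trivializes the relevant triple tensor product. Consistency with the description of the differential then gives the claim.

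The main obstacle I expect is bookkeeping. The transition maps $\varphi_{ij}$ must be tracked correctly so that $L_{ij}$, viewed on $X$ via chart $i$, matches the difference of trivializations on $X_s$. One must also verify that the ambiguity from choosing local trivializations, namely twisting by line bundles on $X\times_B U_i$, changes the cocycle only by a coboundary. That second point is immediate once the first is set up.
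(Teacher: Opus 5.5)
Your proposal is correct. The paper gives no argument of its own: it cites \cite[Lem.\ 5, Sec.\ 3.4 \& 3.5]{explicitleray}, where the map $F^1\Br(X_s)\to H^1(B,\mathcal Pic_{X_s/B})$ is written out on cocycles.

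Your route has two steps:
\begin{enumerate}
\item[(a)] the edge map is computed by the cocycle of differences of local trivializations of the gerbe over the $X_s\times_B U_i$;
\item[(b)] that cocycle is computed for the Hitchin-type gerbe of Proposition \ref{prop:gerbedesc}.
\end{enumerate}
Step (a) is the only non-formal input. You assume it rather than prove it, and it is essentially the content of the cited lemma. So you have not avoided the reference, but you have isolated exactly what is needed from it. What you gain is that step (b), which the paper leaves implicit, becomes transparent.

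Two points in (b) should be tightened.

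First, for an \'etale cover the trivializing object over $U_i$ is not literally ``$\mathcal F_i=\mathcal O$''.
\begin{itemize}
\item You must specify components on every $X\times_B(U_i\times_B U_j)$: suitable pullbacks of $L_{ij}^{\pm1}$ along the $\varphi$'s, with gluing isomorphisms built from $\theta_{ijk}$. Condition (C3) is what makes this a twisted sheaf.
\item The componentwise isomorphisms $L_{ij}\otimes L_{i'j}^{-1}\simeq L_{ii'}$, obtained from $\theta_{ii'j}$ and $\theta_{ij}$, must then be shown to descend, again by (C3). This identifies the difference of the $U_i$- and $U_{i'}$-trivializations with $L_{ii'}$.
\item Once this is done, your separate cocycle check via (C2) is automatic, since differences of trivializations always form a cocycle.
\end{itemize}

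Second, replacing $\{L_{ij}\}$ by $\{L_{ij}^{-1}\}$ does change the class: it gives its negative. The accurate statement is that the lemma holds up to a universal sign, fixed by the conventions for the edge map and for ``difference of trivializations''. This is consistent with the ``up to sign'' appearing later in Proposition \ref{lem:obstruction}.
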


\begin{proof}
    See \cite[Lem.\ 5, Sec.\ 3.4 \& 3.5]{explicitleray}.
\end{proof}

\subsection{Twisted compactified Jacobians}
\label{sec:background_twistedcompact}.

Let $(S, L)$ be a polarized K3 surface of genus $g$ and assume that every curve in the linear system $|L|$ is integral. Let $\overline{\Pic}^0 \coloneqq \overline{\Pic}^0(\mathcal C / |L|) \to |L|$ denote the relative compactified Jacobian of the universal curve, which is a Lagrangian-fibered Hyperkähler manifold. Note that since all members of $|L|$ are integral, the Altman--Kleiman compactification of $\Pic^0(\mathcal C / |L|_{\mathrm{sm}})$ agrees with Simpson's compactification, cf.\ \cite[p.\ 9]{simpsonjacobian} and \cite[p.\ 8]{sawontwistedFM}. The Tate--Shafarevich group parametrizing twists of $\overline{\Pic}^0 \to |L|$ was studied in \cite{markmanlagrangian, abashevarogov, abasheva}, see \cite[Sec.\ 3.1]{bottini} for an exposition. For the purpose of this note, we recall the description of the Tate--Shafarevich group given in \cite{huybrechtsmattei, huybrechtsmattei2}.

The \emph{special Brauer group} of the K3 surface $S$, i.e.,
$$\SBr(S) \coloneqq \{\cA \text{ Azumaya algebra on } S\} / (\mathcal A \sim \cA' \otimes \mathcal End(F) \text{ for } F \text{ with } \det(F) \simeq \mathcal O_X),$$
is naturally identified with $H^2(S, \mathbb Q/ \mathbb  Z)$ via the exponential sequence. Let $\SBr(S, L)$ denote the annihilator of $L \in \NS(S)$ for the intersection pairing $\SBr(S) \times \NS(S) \to \mathbb Q / \mathbb Z$. Huybrechts and Mattei show that, for all $\alpha \in \SBr(S, L)$,  the moduli space $\overline{\Pic}^0_{\alpha}(\mathcal C / |L|) = M_{\alpha}(0, L, g-1)$ of $\alpha$-twisted sheaves\footnote{As all members of $|L|$ are integral, one does not need to worry about stability at this point.} of rank one and degree zero is non-empty. Moreover, using \cite{markmanlagrangian}, they show that any projective twist of $\overline{\Pic}^0 \to |L|$ is given by $\overline{\Pic}^0_{\alpha}$ for some $\alpha \in \SBr(S, L)$, see \cite[Thm.\ 1.1]{huybrechtsmattei}. In the language introduced in the previous sections applied to $\overline{\Pic}^0$ and the Arinkin sheaf introduced in \cite{arinkin}, this may be rephrased by saying that there is a surjective map
$$\SBr(S, L) \to \{s \in H^1(|L|, \overline{\mathcal Pic}^0 ) \mid (\overline{\Pic}^0)_s \text{ is projective}\}.$$
\section{Twisting Arinkin sheaves}
\label{sec:twistedpc}
The aim of the next two sections is to prove the following theorem by refining the strategy of the proof of \cite[Prop.\ 3.3]{bottini}:
\begin{theorem}
    \label{thm:main_abelian_schemes}
    Let $B$ a non-singular quasi-projective variety and let $\pi_X \colon X \to B$ and $\pi_Y \colon Y \to B$ denote good abelian fibrations such that there is an Arinkin sheaf $\cP$ on $X \times Y$. Let $X_s$ be a twist of $X$ and $Y_t$ be a twist of $Y$ such that $s \in H^1(B, \mathcal X)$ is $n$-torsion and $t \in H^1(B, \mathcal Y)$ is $n$-divisible. Then, there are Brauer classes $o_s(t) \in \Br(X_s)$ and $o_t'(s) \in \Br(Y_t)$ and exact linear equivalences
    $$D^b(X, o_{s}(t)^{-1} \cdot \pi_{X_s}^*\gamma)) \simeq D^b(Y,  o_t'(s) \cdot \pi_{Y_t}^* \gamma)$$
    for all $\gamma \in \Br(B).$
\end{theorem}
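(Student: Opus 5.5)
The strategy is to descend the Arinkin sheaf $\cP$ on $X\times_B Y$ to a twisted sheaf on $X_s\times_B Y_t$, and then check that the twisted Fourier--Mukai transform is an equivalence. We work étale-locally: choose a cover $\{U_i\}$ of $B$ on which $s$ and $t$ are represented by cocycles $s_{ij}$ and $t_{ij}$. Refining the cover, we may use $n$-torsion of $s$ to write $n s_{ij}=a_j-a_i$ with local sections $a_i$ of $X$. We use $n$-divisibility of $t$ to write $t_{ij}=n t'_{ij}$ modulo coboundaries, where $\{t'_{ij}\}$ is again a cocycle. Then $X_s\times_B Y_t$ is glued from $X\times_B Y\times_B U_i$ via $\tau_{s_{ij}}\times\tau_{t_{ij}}$.

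\textbf{Step 1: gluing data for $\cP$.} By the last identity of Lemma~\ref{lem:square_identities},
\[(\tau_{s_{ij}}\times\tau_{t_{ij}})^*\cP\simeq \cP\otimes \pr_1^*(\mathrm{id}\times t_{ij})^*\cP\otimes\pr_2^*(s_{ij}\times\mathrm{id})^*\cP\otimes\pi_B^*(s_{ij},t_{ij})^*\cP .\]
So $\cP$ satisfies the twisted gluing condition of Proposition~\ref{prop:gerbedesc} with respect to the line bundles
\[L_{ij}=\pr_1^*(\mathrm{id}\times t_{ij})^*\cP^{-1}\otimes\pr_2^*(s_{ij}\times \mathrm{id})^*\cP^{-1}\otimes(\text{pullback from }U_{ij}).\]
This $L_{ij}$ splits into three parts:
\begin{enumerate}
\item a part pulled back from $X$, namely $(\mathrm{id}\times t_{ij})^*\cP$, which gives the class $o_s(t)$ on $X_s$;
\item a part pulled back from $Y$, which gives $o'_t(s)$ on $Y_t$;
\item a part from $B$, which contributes a Brauer class $\gamma_0\in\Br(B)$.
\end{enumerate}
The normalization and square isomorphisms, together with the commutative diagrams in axiom (5) of an Arinkin sheaf, provide the $\theta_{ij},\theta_{ijk}$ of (C1)--(C3). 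One defines $o_s(t)\in F^1\Br(X_s)$ through Proposition~\ref{prop:gerbedesc} from the data $(\mathrm{id}\times t_{ij})^*\cP$ with theorem-of-the-square trivializations, and similarly for $o'_t(s)$.

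\textbf{Step 2: the main obstacle.} The hard point is the cocycle condition (C3) and the resulting $\Br(B)$ part. The naive cocycle for the $B$-part, $(s_{ij},t_{jk})^*\cP$, is a cup-product-type 2-cocycle. It is exactly here that the torsion and divisibility hypotheses are needed, to make it cohomologous to zero (or absorbable). Using the second identity of the lemma,
\[(s_{ij}\times n t'_{jk})^*\cP\simeq(ns_{ij}\times t'_{jk})^*\cP\simeq\big((a_j-a_i)\times t'_{jk}\big)^*\cP,\]
and this is a coboundary. I would first try to verify this explicitly, checking that the choices of $\theta_{ijk}$ satisfy (C3) on quadruple overlaps; this is the tedious part. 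The upshot is a $(o_s(t)^{-1}\boxtimes o'_t(s))$-twisted sheaf $\cP_{s,t}$ on $X_s\times_B Y_t$. The sign conventions must be tracked to get the inverse on the $X$ side.

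\textbf{Step 3: equivalence.} The Fourier--Mukai transform $\Phi_{\cP_{s,t}}$ is a functor between the stated twisted categories. Tensoring the kernel by $\pi_B^*$ of a $\gamma$-twisted line bundle on $B$ (étale-locally trivial) handles any $\gamma\in\Br(B)$, since $\cP_{s,t}$ is supported on the fiber product. To check that $\Phi_{\cP_{s,t}}$ is an equivalence, compose it with the candidate inverse, the twisted transform with kernel $\cP_{s,t}^\vee\otimes\omega[\dim]$. The composite kernels are twisted sheaves on $X_s\times_B X_s$ (respectively $Y_t\times_B Y_t$). Étale-locally over $B$ they agree with the untwisted composites, which are isomorphic to $\cO_\Delta$ by $B$-linearity and axiom (6). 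The unit and counit maps are defined globally, and being an isomorphism is étale-local on $B$, so the transform is an equivalence.
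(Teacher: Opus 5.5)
This is essentially the paper's proof. Your Step~2 identity $(s_{ij}\times nt'_{jk})^*\cP\simeq((a_j-a_i)\times t'_{jk})^*\cP$ is the bilinearity trick of Lemma~\ref{lem:trtrP2}, and your Step~3 is the \'etale-local unit/counit argument for $B$-linear functors in Proposition~\ref{prop:fullyfaithful}. The Step~1 bookkeeping needs correcting, though: the primitive of that coboundary (the paper's $\pi^*(s_i'\times t'_{ij})^*\cP$, with $s_i'=-a_i$ in your convention) goes into the $X$-gluing data and the rest of the $B$-term into the $Y$-data (the paper's $M_{ij}$, $N_{ij}$), since neither the bare $(\mathrm{id}\times t_{ij})^*\cP$ nor the $B$-term alone carries compatible (C2)--(C3) data, so there is no separate $\gamma_0\in\Br(B)$; and for general $\gamma$ the point is that $\pr_1^*\pi_{X_s}^*\gamma^{-1}\cdot\pr_2^*\pi_{Y_t}^*\gamma$ is trivial on $X_s\times_B Y_t$, not a $\gamma$-twisted line bundle on $B$, which exists only when $\gamma=1$.
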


\begin{remark}\label{rem:benbassat}
    This can be viewed as a variant of \cite[Cor.\ 6.2]{benbassat}, which follows from a more general formalism for twisting derived equivalences. While less general, our result differs in that we need not assume that the Brauer group $\Br(B)$ is trivial, and our description of the Brauer classes is somewhat more explicit.
\end{remark}

Let $\pi_X \colon X \to B$ and $\pi_Y \colon Y \to B$ be good abelian fibrations. Assume that there is an Arinkin sheaf $\cP$ on $X \times Y$. Let $X_s$ be a twist of $X$ corresponding to an $n$-torsion class $ s \in H^1(B, \mathcal X)$ and $Y_t$ a twist of $Y$ corresponding to an $n$-divisible class in $H^1(B, \mathcal Y)$. In particular, we find an \'etale cover $\{U_i\}$ of $B$ and representatives $\{s_{ij} \in H^0(U_{ij}, \mathcal X)\} = s \in H^1(B, \mathcal X)$ and $s_i' \in H^0(U_i, \mathcal X)$ such that
$$ns_{ij} = s_i'|_{U_{ij}} - s_j'|_{U_{ij}}$$
and $\{t'_{ij} \in H^0(U_{ij}, \mathcal Y)\} = t' \in H^0(B, \mathcal Y)$ with
$$t = \{t_{ij} \coloneqq nt_{ij}'\} \in H^1(B, \mathcal Y).$$

In order to descend the Arinkin sheaf $\cP$ to a twisted sheaf on $X_s \times Y_t$, we need to find a collection of line bundles $M_{ij} \in \Pic(X|_{U_{ij}})$, $N_{ij} \in \Pic(Y|_{U_{ij}})$
satisfying
\begin{equation}\label{eq:goal}(\transl_{s_{ij}} \times \transl_{t_{ij}})^* \cP \simeq \cP \otimes \pr_1^* M_{ij} \otimes \pr_2^* N_{ij}.\end{equation}
First, note that the Arinkin sheaf pulls back under $(\transl_{s_{ij}} \times \transl_{t_{ij}})$ as follows:
\begin{lemma}\label{lem:trtrP1}
    We have
    $$(\transl_{s_{ij}} \times \transl_{t_{ij}})^* \mathcal P \simeq \cP \otimes \pr_2^*(s_{ij} \times\mathrm{id})^* \cP \otimes \pr_1^*(\mathrm{id}\times t_{ij})^* \cP \otimes \pi_B^* (s_{ij} \times t_{ij})^* \cP.$$
\end{lemma}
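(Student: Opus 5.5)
This is the last identity of Lemma \ref{lem:square_identities}, applied over $U_{ij}$ with the sections $s = s_{ij}$ and $t = t_{ij}$. We first replace $B$ by $U_{ij}$ (all constructions are compatible with étale base change) and regard $s_{ij}$ and $t_{ij}$ as sections over $U_{ij}$. Both sections land in the smooth loci $X^0$ and $Y^0$, so translation by them is defined through the actions $\mu_X$ and $\mu_Y$.

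We factor $\transl_{s_{ij}} \times \transl_{t_{ij}} = (\transl_{s_{ij}} \times \mathrm{id}) \circ (\mathrm{id} \times \transl_{t_{ij}})$ and treat the two factors in turn.

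\textbf{First factor.} Write $\transl_{s_{ij}} \times \mathrm{id}_Y$ as the composition of $(s_{ij}\circ \pi, \mathrm{id}, \mathrm{id}) \colon X\times_B Y \to X^0 \times_B X \times_B Y$ with $\mu \times \mathrm{id}_Y$. Pulling back the square isomorphism $\mathrm{sq}_X$ along this map gives
$$(\transl_{s_{ij}} \times \mathrm{id})^*\cP \simeq \cP \otimes \pr_2^*(s_{ij}\times \mathrm{id})^*\cP.$$

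\textbf{Second factor.} Symmetrically, $\mathrm{sq}_Y$ gives
$$(\mathrm{id}\times \transl_{t_{ij}})^*\cP \simeq \cP \otimes \pr_1^*(\mathrm{id}\times t_{ij})^*\cP.$$

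\textbf{Combining.} Apply $(\mathrm{id}\times\transl_{t_{ij}})^*$ to the first isomorphism and substitute the second. Since $\pr_1 \circ (\mathrm{id}\times \transl_{t_{ij}}) = \pr_1$, this yields three tensor factors. The first two are $\cP$ and $\pr_1^*(\mathrm{id}\times t_{ij})^*\cP$. The third is
$$\pr_2^*\transl_{t_{ij}}^*(s_{ij}\times\mathrm{id})^*\cP = \pr_2^*(s_{ij}\times \transl_{t_{ij}})^*\cP.$$

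It remains to identify $(s_{ij}\times\transl_{t_{ij}})^*\cP$ on $Y|_{U_{ij}}$. Pull back the second-factor isomorphism along $s_{ij}\times \mathrm{id}$. Because $(\mathrm{id}\times t_{ij})\circ s_{ij} = s_{ij}\times t_{ij}$ as maps from $U_{ij}$, the restriction of $\pr_1^*(\mathrm{id}\times t_{ij})^*\cP$ to $s_{ij}(U_{ij})\times_B Y$ becomes $\pi_Y^*(s_{ij}\times t_{ij})^*\cP$. This gives
$$(s_{ij}\times\transl_{t_{ij}})^*\cP \simeq (s_{ij}\times \mathrm{id})^*\cP\otimes \pi_Y^*(s_{ij}\times t_{ij})^*\cP.$$
Pulling back by $\pr_2$ turns $\pi_Y^*$ into $\pi_B^*$ on $X\times_B Y$, and the claimed formula follows.

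\textbf{Main technical point.} Near the singular fibres $\cP$ is only a sheaf, not a line bundle, so the tensor products and pullbacks above need justification. Every bundle tensored against $\cP$ is a genuine line bundle by property (4) of an Arinkin sheaf, so tensoring with it is exact. The pullbacks are along translations, which are automorphisms, or along sections landing in the smooth locus, where property (4) applies. Hence no derived corrections arise.

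We also need to be careful about which isomorphism $\mathrm{sq}$ is used, and about the support condition when working on $X \times Y$ rather than $X\times_B Y$. Both are harmless here, since the lemma only asserts an abstract isomorphism and does not require compatibility of the choices.
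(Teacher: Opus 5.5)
Your proof is correct and follows the paper's argument: both factor $\transl_{s_{ij}} \times \transl_{t_{ij}}$ into the two one-sided translations and apply the one-sided square identity twice. The only difference is the order. The paper expands $(\mathrm{id}\times\transl_{t_{ij}})^*\cP$ first and then pulls back by $\transl_{s_{ij}}\times\mathrm{id}$, so the cross term $\pi_B^*(s_{ij}\times t_{ij})^*\cP$ comes from $\transl_{s_{ij}}^*(\mathrm{id}\times t_{ij})^*\cP$ rather than from your $(s_{ij}\times\transl_{t_{ij}})^*\cP$; your derivation of the one-sided identities from $\mathrm{sq}_X$, $\mathrm{sq}_Y$ and your remarks on exactness supply details the paper leaves implicit.
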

\begin{proof}
    Applying the fourth identity of Lemma \ref{lem:square_identities}, we obtain
    \begin{align*}
        (\transl_{s_{ij}} \times \transl_{t_{ij}})^* \mathcal P & \simeq (\transl_{s_{ij}} \times \mathrm{id})^*(\mathrm{id} \times \transl_{t_{ij}})^* \mathcal P                                                       \\
                                                                & \simeq(\transl_{s_{ij}} \times \mathrm{id})^*(\cP \otimes \pr_1^*(\mathrm{id} \times t_{ij})^* \mathcal P)                                             \\
                                                                & \simeq \cP \otimes \pr_2^*(s_{ij} \times\mathrm{id})^* \cP \otimes \pr_1^*(\mathrm{id}\times t_{ij})^* \cP \otimes \pi^* (s_{ij} \times t_{ij})^* \cP,
    \end{align*}
    as desired.
\end{proof}
Note the similarity between (\ref{eq:goal}) and Lemma \ref{lem:trtrP1}. The following allows us to distribute the extraneous term $\pi^*(s_{ij} \times t_{ij})^* \cP$ over the factors.
\begin{lemma}\label{lem:trtrP2}
    We have
    $$\pi^*(s_{ij} \times t_{ij})^* \cP \simeq \pr_1^* \pi^*(s_i' \times t'_{ij})^* \cP \otimes \pr_2^* \pi_B^*(s_j' \times t'_{ji})^* \cP.$$
\end{lemma}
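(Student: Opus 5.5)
The statement is an identity between pullbacks of line bundles on $U_{ij}$, pulled back along the structure map. It therefore suffices to prove, on $U_{ij}$, that
$$(s_{ij} \times t_{ij})^* \cP \simeq (s_i' \times t'_{ij})^* \cP \otimes (s_j' \times t'_{ji})^* \cP.$$
We then pull back along the projection from $X \times_B Y|_{U_{ij}}$ to $U_{ij}$. On the support $X\times_B Y$ this projection factors as $\pi\circ\pr_1 = \pi\circ\pr_2$, so it does not matter whether the factors are written as $\pr_1^*\pi^*$ or $\pr_2^*\pi_B^*$. The plan is to use the bilinearity in Lemma \ref{lem:square_identities} to move the factor $n$ from $t_{ij}$ to $s_{ij}$. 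Then we use the divisibility relation $ns_{ij} = s_i' - s_j'$.

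\textbf{Step 1.} Since $t_{ij} = nt'_{ij}$, the second identity of Lemma \ref{lem:square_identities} (applied over $U_{ij}$, with base $B$ replaced by $U_{ij}$) gives
$$(s_{ij}\times nt'_{ij})^*\cP \simeq ((s_{ij}\times t'_{ij})^*\cP)^{\otimes n} \simeq (ns_{ij}\times t'_{ij})^*\cP = ((s_i'-s_j')\times t'_{ij})^*\cP.$$

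\textbf{Step 2.} Restricting the first identity of Lemma \ref{lem:square_identities} along $t'_{ij}$ yields additivity in the first variable. This gives
$$((s_i'-s_j')\times t'_{ij})^*\cP \simeq (s_i'\times t'_{ij})^*\cP\otimes(-s_j'\times t'_{ij})^*\cP.$$

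\textbf{Step 3.} It remains to identify $(-s_j'\times t'_{ij})^*\cP$ with $(s_j'\times t'_{ji})^*\cP$. We use the cochain convention $t'_{ji} = -t'_{ij}$. Additivity in each variable, from the two theorem-of-the-square isomorphisms $\mathrm{sq}_X$ and $\mathrm{sq}_Y$, combined with the normalization isomorphisms $(0\times t)^*\cP\simeq\cO$ and $(s\times 0)^*\cP\simeq\cO$, gives
$$(-s\times t)^*\cP \simeq ((s\times t)^*\cP)^{-1} \simeq (s\times -t)^*\cP.$$
Hence $(-s_j'\times t'_{ij})^*\cP \simeq (s_j'\times -t'_{ij})^*\cP = (s_j'\times t'_{ji})^*\cP$. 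Combining Steps 1–3 and pulling back to $X\times_B Y|_{U_{ij}}$ gives the claim.

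\textbf{Main obstacle.} The only delicate point is bookkeeping rather than mathematics. We must make sure the restrictions of $s_i', s_j'$ to $U_{ij}$ are sections of $X^0$, which holds automatically since sections land in $X^0$. This is needed so that the theorem-of-the-square isomorphisms (defined on $X^0\times X\times Y$) apply. We must also verify that the convention $t'_{ji}=-t'_{ij}$ is the one intended. If instead $t'$ is an arbitrary collection with $nt'$ a cocycle, Step 3 must be replaced by an explicit comparison, which introduces an extra factor involving $t'_{ij}+t'_{ji}$ paired with $s_j'$.
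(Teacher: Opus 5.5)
Your proposal is correct and follows the paper's own argument: rewrite $t_{ij}=nt'_{ij}$, move the factor $n$ into the first slot using the second identity of Lemma \ref{lem:square_identities}, substitute $ns_{ij}=s_i'-s_j'$, and split by additivity in the first variable. Your Step 3 only makes explicit the sign identity $(-s_j'\times t'_{ij})^*\cP\simeq(s_j'\times t'_{ji})^*\cP$, which the paper uses without comment; it holds because $t'$ is a cocycle representing an $n$-th root of the class $t$, so $t'_{ji}=-t'_{ij}$.
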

\begin{proof}
    Applying the first and second identities of Lemma \ref{lem:square_identities}, we obtain
    \begin{align*}
        \pi^*(s_{ij} \times t_{ij})^* \cP & \simeq \pi^*(s_{ij} \times nt_{ij}')^* \cP                                                  \\
                                          & \simeq \pi^*(n s_{ij} \times t'_{ij})^* \cP                                                 \\
                                          & \simeq \pi^*((s_i'-s_j') \times t_{ij}')^* \cP                                              \\
                                          & \simeq \pr_1^* \pi^*(s_i' \times t'_{ij})^* \cP \otimes \pr_2^*(s_j' \times t'_{ji})^* \cP,
    \end{align*}
    as desired.
\end{proof}
Therefore, it makes sense to consider the collections of line bundles
$$M_{ij} \coloneqq (\mathrm{id} \times t_{ij})^* \mathcal P \otimes \pi^*(s_i' \times t_{ij}')^* \mathcal P \in \Pic(X|_{U_{ij}})$$
and
$$N_{ij} \coloneqq (s_{ij} \times \mathrm{id})^* \mathcal P \otimes \pi^* (s_j' \times t_{ji}')^* \mathcal P \in \Pic(Y|_{U_{ij}}).$$
Combining Lemma \ref{lem:trtrP1} and Lemma \ref{lem:trtrP}, we conclude:
\begin{corollary}\label{lem:trtrP}
    We have
    $$(\transl_{s_{ij}} \times \transl_{t_{ij}})^* \mathcal P \simeq \mathcal P  \otimes \pr_1^* M_{ij} \otimes \pr_2^* N_{ij}.$$
\end{corollary}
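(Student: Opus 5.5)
The corollary follows by combining Lemma \ref{lem:trtrP1} with Lemma \ref{lem:trtrP2}. No new geometric input is needed, only a regrouping of line bundles on $X|_{U_{ij}} \times_{U_{ij}} Y|_{U_{ij}}$. I will first apply Lemma \ref{lem:trtrP1}, which writes $(\transl_{s_{ij}} \times \transl_{t_{ij}})^*\cP$ as
\[
\cP \otimes \pr_2^*(s_{ij}\times\mathrm{id})^*\cP \otimes \pr_1^*(\mathrm{id}\times t_{ij})^*\cP \otimes \pi_B^*(s_{ij}\times t_{ij})^*\cP .
\]
The first three factors already have the desired shape. The only term not of the form $\pr_1^*(\cdot)$ or $\pr_2^*(\cdot)$ is the pullback from the base, $\pi_B^*(s_{ij}\times t_{ij})^*\cP$.

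Next I will substitute Lemma \ref{lem:trtrP2} for this base term. It splits the term as $\pr_1^*\pi^*(s_i'\times t'_{ij})^*\cP \otimes \pr_2^*\pi^*(s_j'\times t'_{ji})^*\cP$. Here I use that $\pi_B = \pi_X\circ\pr_1 = \pi_Y\circ\pr_2$ on the fiber product $X\times_B Y$, which supports $\cP$. This lets a line bundle pulled back from $U_{ij}$ be rewritten as a pullback along either projection.

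Then I regroup the factors and compare with the definitions. The $\pr_1^*$ factors combine to $(\mathrm{id}\times t_{ij})^*\cP\otimes\pi^*(s_i'\times t'_{ij})^*\cP = M_{ij}$. The $\pr_2^*$ factors combine to $(s_{ij}\times\mathrm{id})^*\cP\otimes\pi^*(s_j'\times t'_{ji})^*\cP = N_{ij}$. This gives $\cP\otimes\pr_1^*M_{ij}\otimes\pr_2^*N_{ij}$, as claimed.

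The main point needing care is the sign and index bookkeeping inside Lemma \ref{lem:trtrP2}. The relation $ns_{ij} = s_i'-s_j'$ produces $(s_i'\times t'_{ij})^*\cP\otimes(s_j'\times t'_{ij})^{*}\cP^{-1}$. To match the term $(s_j'\times t'_{ji})^*\cP$, I use $t'_{ji} = -t'_{ij}$ together with the first identity of Lemma \ref{lem:square_identities} applied in the $Y$-variable, i.e.\ the identity $\mathrm{sq}_Y$. All of these are isomorphisms of line bundles: the sections lie in $X^0$ and $Y^0$, and property (4) of an Arinkin sheaf guarantees that the relevant pullbacks are invertible. So tensor products and inverses make sense, and the isomorphisms can be composed freely.
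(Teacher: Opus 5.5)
Your proposal is correct and follows the paper's argument: you substitute Lemma~\ref{lem:trtrP2} for the base term $\pi_B^*(s_{ij}\times t_{ij})^*\cP$ in Lemma~\ref{lem:trtrP1}, then regroup the $\pr_1^*$- and $\pr_2^*$-factors into $M_{ij}$ and $N_{ij}$, which were defined exactly so that this works. Your extra bookkeeping with $t'_{ji}=-t'_{ij}$ and the square identity in the $Y$-variable spells out the last step of the paper's proof of Lemma~\ref{lem:trtrP2}, and it is correct.
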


In view of the discussion in Section \ref{sec:gerbe}, it remains to show that the collections $\{M_{ij}\}$ and $\{N_{ij}\}$ satisfy the conditions listed in Proposition \ref{prop:gerbedesc}.

\begin{lemma}\label{lem:alphagerbe}
    The collection of line bundles $\{M_{ij}\}$ defines a Brauer class $o_s(t) \in H^2(X_s, \bbG_m)$.
\end{lemma}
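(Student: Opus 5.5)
To prove Lemma \ref{lem:alphagerbe}, I would verify the hypotheses of Proposition \ref{prop:gerbedesc} for $L_{ij} \coloneqq M_{ij}$ with respect to the cover $\{U_i\}$ and the transition maps $\varphi_{ij} = \transl_{s_{ij}}$. First I would compute $\varphi_{ij}^*M_{jk}$ using Lemma \ref{lem:square_identities}. By the third identity, with the roles of the factors swapped via the square isomorphism $\mathrm{sq}_X$,
$$\transl_{s_{ij}}^*(\mathrm{id}\times t_{jk})^*\cP \simeq (\mathrm{id}\times t_{jk})^*\cP \otimes \pi^*(s_{ij}\times t_{jk})^*\cP.$$
Pulling back a line bundle from $U$ along $\pi$ is unaffected by $\transl_{s_{ij}}$. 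Hence
$$\varphi_{ij}^*M_{jk} \simeq (\mathrm{id}\times t_{jk})^*\cP\otimes \pi^*\big((s_{ij}\times t_{jk})^*\cP\otimes (s_j'\times t'_{jk})^*\cP\big).$$

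Next I would check the two isomorphism conditions (C1) and (C2) up to isomorphism. For (C1), the combination $M_{ij}\otimes\varphi_{ij}^*M_{ji}$ is built from three kinds of terms. The first is $(\mathrm{id}\times(t_{ij}+t_{ji}))^*\cP$, which is trivial because $t_{ji}=-t_{ij}$ and we have the first and second identities together with normalization. The remaining terms are pulled back from $U_{ij}$, namely $(s_i'\times t'_{ij})$, $(s_j'\times t'_{ji})$ and $(s_{ij}\times t_{ji})$. Rewriting $t_{ji}=nt'_{ji}$ and $ns_{ij}=s_i'-s_j'$, these combine to
$$(s_i'\times t'_{ij})\otimes(s_j'\times t'_{ji})\otimes((s_i'-s_j')\times t'_{ji}),$$
which is trivial by bilinearity. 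For (C2), I would perform the analogous computation with $M_{ik}^{-1}\otimes M_{ij}\otimes\varphi_{ij}^*M_{jk}$ (correcting the evident index typo in (C2)). The $(\mathrm{id}\times t)$-parts cancel because $t_{ij}+t_{jk}=t_{ik}$. The base parts become
$$(s_i'\times(t'_{ij}+t'_{jk}-t'_{ik}))\otimes((s_i'-s_j')\times t'_{jk})\otimes(s_j'\times t'_{jk}),$$
and this is trivial provided $\{t'_{ij}\}$ is a cocycle. That is exactly the assumption $t'\in H^1(B,\mathcal Y)$ (the statement says $H^0$, but $t'$ is meant as a 1-cocycle). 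All these isomorphisms would be made canonical using $\mathrm{triv}$, $\mathrm{sq}_X$ and $\mathrm{sq}_Y$, which gives explicit sections $\theta_{ij}$ and $\theta_{ijk}$.

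The main obstacle is (C3), the cocycle identity on quadruple overlaps. I would not check it by hand. Instead, I would observe that every $\theta$ is a composite of the structural isomorphisms $\mathrm{triv}$ and $\mathrm{sq}$. Condition (5) of the Arinkin sheaf, the compatibility of normalization with the square, together with the associativity of the square isomorphisms (restricted to sections, where $\cP$ is a line bundle by (4)) then shows that both sides of (C3) are the same canonical trivialization of a line bundle. If strict equality fails, the two sides differ by a unit in $\cO^*(U_{ijkl})$, which is a Čech 3-cocycle. In that case I would choose a refinement on which this cocycle is a coboundary (possible, since only the class in $H^2$ of $\bbG_m$ on the total space matters), or simply rescale $\theta_{ijk}$ by units from the base. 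Any such ambiguity changes the resulting class only by an element of the image of $\Br(B)$, which is harmless in view of the twisting by $\pi_{X_s}^*\gamma$ allowed in Theorem \ref{thm:main_abelian_schemes}. With (C1)–(C3) in place, Proposition \ref{prop:gerbedesc} produces the class $o_s(t)\in F^1\Br(X_s)\subset H^2(X_s,\bbG_m)$.
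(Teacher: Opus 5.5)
Your overall route is the paper's. You verify Proposition~\ref{prop:gerbedesc} for $L_{ij}=M_{ij}$ and compute $\transl_{s_{ij}}^*M_{jk}$ via Lemma~\ref{lem:square_identities}. You then re-base it at $s_i'$ using $ns_{ij}=s_i'-s_j'$ and $t_{jk}=nt'_{jk}$, build $\theta_{ij},\theta_{ijk}$ from the normalization and square isomorphisms, and appeal to property (5) of $\cP$ for (C3). Two things need correcting.

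\textbf{A slip in (C2).} Your (C2) display double counts $t'_{jk}$. The factor $M_{ik}^{-1}\otimes M_{ij}$ contributes only $s_i'\times(t'_{ij}-t'_{ik})$ on the base. The factor $\transl_{s_{ij}}^*M_{jk}$ contributes $((s_i'-s_j')\times t'_{jk})\otimes(s_j'\times t'_{jk})\simeq s_i'\times t'_{jk}$. So the total is $s_i'\times(t'_{ij}+t'_{jk}-t'_{ik})$, which is trivial for a cocycle $t'$. As you wrote it, the expression reduces to $(s_i'\times t'_{jk})^*\cP$, which is not trivial.

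\textbf{The fallback for (C3) is invalid.} Suppose the two sides of (C3) differed by units on $U_{ijkl}$. That Čech $3$-cocycle would represent a class in $H^3(B,\bbG_m)$. This is exactly the obstruction in the Leray sequence $F^1\Br(X_s)\to H^1(B,\mathcal{P}ic_{X_s/B})\to H^3(B,\bbG_m)$.
\begin{itemize}
\item Refining the cover cannot make a nonzero class a coboundary.
\item Rescaling the $\theta_{ijk}$ by units from the base changes the cocycle only by a coboundary.
\item If the class were nonzero, no Brauer class with image $\{[M_{ij}]\}$ would exist at all. It is therefore not an ambiguity absorbed by $\Br(B)$, and the twisting by $\pi_{X_s}^*\gamma$ is irrelevant here.
\end{itemize}
Note also that (C1) and (C2) alone could always be arranged after refinement, since line bundles on the base are étale-locally trivial. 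The real content of the torsion and divisibility hypotheses is that, with the canonical choices, (C3) holds on the nose.

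So you must prove the strict identity rather than hedge. The input the paper isolates is this: by property (5), $\transl_{s_{ij}}^*\theta_{jk}$ and $\transl_{s_{ij}}^*\theta_{jkl}$ coincide with the canonical composites ``trivialization, then square, then $\varphi_{ijk}^{-1}\otimes\varphi_{ikj}^{-1}$ (resp.\ $\varphi_{ijk}^{-1}\otimes\varphi_{ikl}^{-1}\otimes\varphi_{ilj}^{-1}$)''. These are normalized at $s_i'$ rather than at $s_j'$. Once every term of (C3) is written in this form, both sides are the same composite of structural isomorphisms. Drop the fallback and state this step explicitly.
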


\begin{proof}
    Applying repeatedly the identities of Lemma \ref{lem:square_identities} and the facts that $ns_{ij} = s_i'-s_j'$ and $nt_{ij}' = t_{ij}$, we obtain isomorphisms
    \begin{align*}\varphi_{ijk} \colon \transl_{s_{ij}}^*M_{jk} &\congpf (\mathrm{id} \times t_{jk})^* \mathcal P  \otimes \pi^*((s_{ij} \times t_{jk})^* \mathcal P  \otimes (s_j' \times t_{jk}'))^* \mathcal P \\&\congpf (\mathrm{id} \times t_{jk})^* \mathcal P \otimes \pi^*(s_i' \times t_{jk}')^* \mathcal P\end{align*}
    The nowhere vanishing sections $\theta_{ij} \in H^0(X|_{U_{ij}}, M_{ij} \otimes \transl_{s_{ij}}^* M_{ji})$ are defined as:
    \begin{align*}
        \mathcal O_A & \xrightarrow{\mathrm{trivialization}} (\mathrm{id} \times (t_{ij} + t_{ji}))^* \cP \otimes \pi^*(s_i' \times (t_{ij}'+t_{ji}'))^* \cP                                                                                  \\
                     & \xrightarrow{\mathrm{square}}(\mathrm{id} \times t_{ij})^* \mathcal P \otimes \pi^*(s_i' \times t_{ij}')^* \mathcal P \otimes (\mathrm{id} \times t_{ji})^* \mathcal P \otimes \pi^*(s_i' \times t_{ji}')^* \mathcal P \\
                     & \xrightarrow{\varphi_{iij}^{-1} \otimes \varphi_{iji}^{-1}} M_{ij} \otimes \transl^*_{s_{ij}} M_{ji}
    \end{align*}
    A similar process yields nowhere vanishing sections
    $$\theta_{ijk} \in H^0(X|_{U_{ijk}}M_{ij} \otimes \transl_{ij}^* M_{jk} \otimes \transl_{ik}^* M_{ki}):$$
    \begin{align*}
        \mathcal O_A & \xrightarrow{\mathrm{trivialization}} (\mathrm{id} \times (t_{ij} + t_{jk} + t_{ki}))^* \cP \otimes \pi^*(s_i' \times (t_{ij}'+t_{jk}'+t_{ki}'))^* \cP                                                                                 \\
                     & \xrightarrow{\mathrm{square}}(\mathrm{id} \times t_{ij})^* \mathcal P \otimes \pi^*(s_i' \times t_{ij}')^* \mathcal P \otimes \cdots \otimes  (\mathrm{id} \times t_{ki})^* \mathcal P \otimes \pi^*(s_i' \times t_{ki}')^* \mathcal P \\
                     & \xrightarrow{\varphi_{iij}^{-1} \otimes \varphi_{ijk}^{-1} \otimes  \varphi_{iki}^{-1}} M_{ij} \otimes \transl^*_{s_{ij}} M_{jk} \otimes \transl^*_{s_{ik}} M_{ki}
    \end{align*}
    It remains to check the cocycle conditions. For this, we note that the compability between our chosen trivialization of $\cP$ along the identity sections and the theorem of the square (property (5) of the Arinkin sheaf $\cP$) implies that the section $\transl_{s_{ij}}^* \theta_{jk} \in H^0(X|_{U_{ijk}}, \transl^*_{s_{ij}} M_{jk} \otimes \transl_{s_{ik}}^* M_{kj})$ agrees with the composition
    \begin{align*}
        \mathcal O_A & \xrightarrow{\mathrm{trivialization}} (\mathrm{id} \times (t_{jk} + t_{kj}))^* \cP \otimes \pi^*(s_i' \times (t_{jk}'+t_{kj}'))^* \cP                                                                                  \\
                     & \xrightarrow{\mathrm{square}}(\mathrm{id} \times t_{jk})^* \mathcal P \otimes \pi^*(s_i' \times t_{jk}')^* \mathcal P \otimes (\mathrm{id} \times t_{kj})^* \mathcal P \otimes \pi^*(s_i' \times t_{kj}')^* \mathcal P \\
                     & \xrightarrow{\varphi_{ijk}^{-1} \otimes \varphi_{ikj}^{-1}} \transl_{s_{ij}}^*M_{jk} \otimes \transl^*_{s_{ik}} M_{kj}
    \end{align*}
    while the section $\transl_{s_ij}^* \theta_{jkl} \in H^0(X|_{U_{ijkl}}, \transl^*_{s_{ij}} M_{jk} \otimes \transl_{s_{ik}}^* M_{kl} \otimes \transl_{s_{il}}^* M_{lj})$ agrees with the composition
    \begin{align*}
        \mathcal O_A & \xrightarrow{\mathrm{trivialization}} (\mathrm{id} \times (t_{jk} + t_{kl} + t_{lj}))^* \cP \otimes \pi^*(s_i' \times (t_{jk}'+t_{kl}'+t_{lj}'))^* \cP                                                                                 \\
                     & \xrightarrow{\mathrm{square}}(\mathrm{id} \times t_{jk})^* \mathcal P \otimes \pi^*(s_i' \times t_{jk}')^* \mathcal P \otimes \cdots \otimes  (\mathrm{id} \times t_{lj})^* \mathcal P \otimes \pi^*(s_i' \times t_{lj}')^* \mathcal P \\
                     & \xrightarrow{\varphi_{ijk}^{-1} \otimes \varphi_{ikl}^{-1} \otimes  \varphi_{ilj}^{-1}} \transl_{ij}^*M_{jk} \otimes \transl^*_{s_{ik}} M_{kl} \otimes \transl^*_{s_{il}} M_{jl}
    \end{align*}
    It is then straightforward to check that the cocycle computation follows by the compatibility of property (5) of the Arinkin sheaf $\cP$.
\end{proof}

The proof of the following lemma is very similar to the previous one. Due to the slight asymmetry in the setup, we give it anyways.
\begin{lemma}
    The collection of line bundles $\{N_{ij}\}$ defines a Brauer class $o_t'(s) \in H^2(Y_t, \bbG_m)$.
\end{lemma}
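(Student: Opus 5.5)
We follow the proof of Lemma \ref{lem:alphagerbe} with the roles of $X$ and $Y$ exchanged, keeping track of the asymmetry. On the $Y$-side, the twist $Y_t$ has transition maps $\transl_{t_{ij}}$ with $t_{ij} = nt'_{ij}$, and
$$N_{ij} = (s_{ij} \times \mathrm{id})^* \cP \otimes \pi^*(s_j' \times t_{ji}')^*\cP .$$
Here the constant correction term is indexed by $j$ rather than $i$, and carries $t'_{ji}$ rather than $t'_{ij}$. We must therefore produce the data (C1)--(C3) of Proposition \ref{prop:gerbedesc} for $\{N_{ij}\}$ relative to the cover $\{U_i\}$ and the maps $\transl_{t_{ij}}$.

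\textbf{Comparison isomorphisms.} We first construct
$$\psi_{ijk} \colon \transl_{t_{ij}}^* N_{jk} \congpf (s_{jk}\times \mathrm{id})^*\cP \otimes \pi^*(\text{constant}).$$
By the third identity of Lemma \ref{lem:square_identities} with the roles of the factors swapped, we have $\transl_{t_{ij}}^*(s_{jk}\times\mathrm{id})^*\cP \simeq (s_{jk}\times \mathrm{id})^*\cP \otimes \pi^*(s_{jk}\times t_{ij})^*\cP$. We then rewrite the extra constant $(s_{jk}\times n t'_{ij})^*\cP \simeq (ns_{jk}\times t'_{ij})^*\cP \simeq ((s_j'-s_k')\times t'_{ij})^*\cP$ using the first and second identities. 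This lets us rewrite everything in a normal form on $Y|_{U_{ijk}}$, namely
$$\bigotimes (s_{ab}\times\mathrm{id})^*\cP \otimes \pi^*\bigl(\textstyle\bigotimes (s'_c \times t'_{de})^*\cP\bigr).$$

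\textbf{Sections $\theta_{ij}$ and $\theta_{ijk}$.} These are built exactly as before. We use the normalization $\mathrm{triv}_Y$ and the fact that $s_{ij}+s_{ji}=0$ and $s_{ij}+s_{jk}+s_{ki}=0$ (the cocycle condition for $s$), together with $\mathrm{sq}_X$, to trivialize $\bigotimes (s_{ab}\times\mathrm{id})^*\cP \simeq ((\sum s_{ab})\times \mathrm{id})^*\cP \simeq \cO$. The constant parts $\pi^*(s'_c\times t'_{de})^*\cP$ must then cancel identically in the normal form. Checking this bookkeeping is where the asymmetric index choice ($s_j'$ paired with $t'_{ji}$) is used: the $\pi^*$-terms generated by $\psi_{ijk}$ must exactly match the tensor product of the $\pi^*(s'_j\times t'_{ji})^*\cP$ factors appearing in $N_{ij}\otimes \transl^*_{t_{ij}}N_{jk}\otimes\transl^*_{t_{ik}}N_{ki}$. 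If the terms leave a residue, it will be a pullback of a line bundle on $U_{ijk}$ of the form $\pi^*(s'_c \times (t'_{ab}+t'_{bc}+t'_{ca}))^*\cP$. Since $t'$ is a global section, $t'_{ab}+t'_{bc}+t'_{ca}=0$, so the residue is trivialized by $\mathrm{triv}$.

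\textbf{Cocycle condition (C3).} As in Lemma \ref{lem:alphagerbe}, we express $\transl_{t_{ij}}^*\theta_{jk}$ and $\transl_{t_{ij}}^*\theta_{jkl}$ as compositions of trivialization, square and $\psi^{-1}$ maps. Both sides of (C3) then become composites of normalization and theorem-of-the-square isomorphisms between the same line bundles. Their equality follows from the second commutative diagram in property (5) of the Arinkin sheaf, together with associativity and commutativity of the square isomorphisms (coherence for $\mathrm{sq}_X$ over the abelian group scheme $X^0$). We expect this coherence check, specifically ensuring that the $\pi^*$-constant terms are trivialized compatibly across the fourfold overlaps, to be the main obstacle. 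The natural approach is to reduce it to the statement that all isomorphisms involved are canonical isomorphisms in the Picard groupoid generated by $\mathrm{sq}$ and $\mathrm{triv}$, which property (5) makes coherent. Proposition \ref{prop:gerbedesc} then yields the class $o'_t(s)\in F^1\Br(Y_t)\subset H^2(Y_t,\bbG_m)$.
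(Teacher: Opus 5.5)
Your proposal is essentially the paper's proof. The paper also writes down only $\psi_{ijk}$, obtained from the translation identity and $ns_{jk}=s_j'-s_k'$; it then says to build $\theta_{ij},\theta_{ijk}$ from normalization and square isomorphisms and to check (C3) exactly as in Lemma~\ref{lem:alphagerbe} using property (5), leaving that coherence check as unworked as you do. Your bookkeeping prediction is correct: the leftover constants in (C1) and (C2) have the form $\pi^*(s_c'\times \sigma)^*\mathcal P$ with $\sigma$ a cyclic sum of the $t'_{ab}$ (e.g.\ $\pi^*(s_k'\times(t'_{ik}+t'_{kj}+t'_{ji}))^*\mathcal P$ in (C2)), and they are trivial because $t'$ is a \v{C}ech $1$-cocycle, not a global section as the paper's notation $H^0(B,\mathcal Y)$ suggests.
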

\begin{proof}
    Apply the strategy above, but use the following identification instead:
    \begin{align*}\psi_{ijk} \colon \transl_{t_{ij}}^*N_{jk} &\congpf (s_{jk} \times \mathrm{id})^* \mathcal P \otimes \pi^*((s_{jk} \times t_{ij})^* \mathcal P  \otimes (s_j' \times t_{kj}'))^* \mathcal P \\&\congpf (s_{jk} \times \mathrm{id})^* \mathcal P \otimes \pi^*((s_j' \times t_{ij}')^* \mathcal P \otimes (s'_{k} \times t_{ji}')^* \cP \otimes (s_j' \times t'_{kj})^* \cP )\end{align*}
    Then, continue as in the proof of Lemma \ref{lem:alphagerbe}.
\end{proof}

All in all, we have managed to descend the Arinkin sheaf to a twisted sheaf:

\begin{lemma} \label{lem:twistedpoincare}
    The Arinkin sheaf descends to a $(o_s(t)\boxtimes o'_t(s))$-twisted sheaf $^{{s,t}}\cP$ on $X_s\times Y_t$.
\end{lemma}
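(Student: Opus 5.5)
We apply the gluing description of Proposition \ref{prop:gerbedesc} to the product fibration $X \times_B Y \to B$ and the twist $X_s \times_B Y_t$. Its transition maps are $\transl_{s_{ij}} \times \transl_{t_{ij}}$. The gerbe on $X_s \times Y_t$ is the one with Brauer class $o_s(t) \boxtimes o'_t(s) = \pr_1^* o_s(t) \cdot \pr_2^* o'_t(s)$. We describe it by the line bundles $\pr_1^* M_{ij} \otimes \pr_2^* N_{ij}$, with the sections $\pr_1^*\theta_{ij} \otimes \pr_2^*\theta'_{ij}$ and $\pr_1^*\theta_{ijk}\otimes \pr_2^*\theta'_{ijk}$ obtained in Lemma \ref{lem:alphagerbe} and the following lemma. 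The cocycle condition (C3) holds factorwise, hence for the tensor product. Since $\cP$ is supported on $X \times_B Y$, it is enough to work on $(X \times_B Y)|_{U_{ij}}$, or equivalently on $X \times Y$ restricted over $U_{ij} \times U_{ij}$.

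Next we produce the gluing data. On each $U_i$ we set $\mathcal F_i := \cP|_{(X\times_B Y)|_{U_i}}$. For the isomorphisms $\psi_{ij}\colon \mathcal F_i \congpf (\transl_{s_{ij}}\times\transl_{t_{ij}})^*\mathcal F_j \otimes (\pr_1^*M_{ij}\otimes\pr_2^*N_{ij})^{-1}$, or the equivalent form with the convention of Proposition \ref{prop:gerbedesc}, we use the inverse of the isomorphism in Corollary \ref{lem:trtrP}. It matters that this isomorphism is the explicit composite of canonical maps, not just some isomorphism. The composite is built from the square isomorphisms $\mathrm{sq}_X$, $\mathrm{sq}_Y$ in Lemma \ref{lem:trtrP1}, then the square identities and normalizations in Lemma \ref{lem:trtrP2}, using $ns_{ij} = s'_i - s'_j$ and $t_{ij} = nt'_{ij}$. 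Fixing these choices determines $\psi_{ij}$ uniquely.

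The main step is the compatibility of $\psi_{ij}$ with $\theta_{ij}$ and $\theta_{ijk}$. On $U_{ii}$ we need $\psi_{ij}$ composed with $\transl^*\psi_{ji}$ to equal $\mathrm{id}\otimes\theta_{ij}$. On triple overlaps we need $\psi_{ik}^{-1}\circ\transl_{s_{ij}}^*\psi_{jk}\circ\psi_{ij}$ to equal multiplication by $\theta_{ijk}$. Both sides are composites of square and trivialization isomorphisms of $\cP$ pulled back along sections and translations. The key input is the associativity of $\mathrm{sq}_X$ and $\mathrm{sq}_Y$ under iterated translations, together with property (5) of the Arinkin sheaf, i.e. compatibility of the square isomorphisms with the normalizations. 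This is the same input that gave the cocycle condition in Lemma \ref{lem:alphagerbe}. So we reduce the comparison to identities between canonical isomorphisms of line bundles of the form $(s\times t)^*\cP$ and $(\mathrm{id}\times t)^*\cP$. For $\theta_{ijk}$ we use $t_{ij}+t_{jk}+t_{ki}=0$ and $t'_{ij}+t'_{jk}+t'_{ki} = 0$, the latter as $t'$ is a cocycle.

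The hard part is the bookkeeping of this triple-overlap check, in particular the asymmetric terms $\pi^*(s'_i\times t'_{ij})^*\cP$ and $\pi^*(s'_j\times t'_{ji})^*\cP$. They arise from splitting $\pi^*(s_{ij}\times t_{ij})^*\cP$ between the two factors, and we must verify that they recombine consistently. If a direct check is messy, we first rigidify along $B$. Both sides are automorphisms of the same sheaf $\cP$, and $\cP$ is simple relative to $B$ because $\Phi_\cP$ is a $B$-linear equivalence. So they differ by a unit pulled back from $U_{ijk}$. Restricting along the zero sections $0_X\times 0_Y$ and using the normalizations $\mathrm{triv}_X$ and $\mathrm{triv}_Y$, we then reduce the check to the identity on $\mathcal O_{U_{ijk}}$. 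Once the conditions hold, Proposition \ref{prop:gerbedesc} yields the twisted sheaf ${}^{s,t}\cP$ on $X_s\times Y_t$.
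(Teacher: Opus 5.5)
Your proposal is correct and follows the paper's argument. The paper's proof likewise takes the isomorphisms of Corollary~\ref{lem:trtrP} as gluing data for the restrictions of $\cP$ and invokes Proposition~\ref{prop:gerbedesc}, but it leaves implicit the compatibility with $\theta_{ij}$ and $\theta_{ijk}$ that you spell out and organize via relative simplicity of $\cP$ and the normalizations.
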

\begin{proof}
    Follows from Lemma \ref{lem:trtrP} and the description of twisted sheaves in Proposition \ref{prop:gerbedesc}.
\end{proof}

Note that $o_s(t)$ and $o_t(s)$ depend on the choice of trivialization datum $s'$ and as well as the $n$-th root $t'$. Different choices of $s'$ and $t'$ amount to changing both $o_s(t)$ and $o_t'(s)$ by a Brauer class pulled back from the base. In fact, we have the following:
\begin{lemma}\label{lem:grouphom}
    If $s \in H^1(B, \mathcal X)$ is $n$-torsion, then the map
    $$o_s \colon H^1(B, \mathcal Y)_{n-{\mathrm{div}}} \to \Br(X_s) / \Br(B).$$
    is a well-defined group homomorphism. If $s \in H^1(B, \mathcal X)$ is $n$-divisible, then the map
    $$o'_s(t) \colon H^1(B, \mathcal Y)[n] \to \Br(X_s) / \Br(B)$$ is a well-defined group homomorphism. 
    If $s \in H^1(B, \mathcal X)$ and $t \in H^1(B, \mathcal Y)$ are both $n$-torsion and $n$-divisible, then we have
    $$o_s(t) \equiv o_s'(t) \mod \Br(B).$$
\end{lemma}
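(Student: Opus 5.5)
We work throughout with the image of $o_s(t)$ in $F^1\Br(X_s)/\Br(B)$. By the exact sequence $\Br(B)\to F^1\Br(X_s)\to H^1(B,\mathcal Pic_{X_s/B})$, the first map $o_s$ is well defined and additive as soon as two things hold. First, the class obtained from $\{M_{ij}\}$ changes only by an element of $\Br(B)$ when we vary the auxiliary choices. Second, the map is additive. The lemma describing $F^1\Br(X_s)\to H^1(B,\mathcal Pic_{X_s/B})$ shows that the image of $o_s(t)$ there is represented by the cocycle $\{[M_{ij}]\}$ modulo $\Pic(U_{ij})$. Modulo $\Pic(U_{ij})$, we have $[M_{ij}]=[(\mathrm{id}\times t_{ij})^*\cP]$, since the other factor of $M_{ij}$ is pulled back from $U_{ij}$. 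This cocycle is the image of $t$ under the map $H^1(B,\mathcal Y)\to H^1(B,\mathcal Pic_{X_s/B})$ induced by $\cP$; this map makes sense on $X_s$ because translation invariance up to base line bundles, i.e.\ the third identity of Lemma \ref{lem:square_identities}, makes $t_{ij}\mapsto(\mathrm{id}\times t_{ij})^*\cP$ compatible with the twist by $s$. That assignment is visibly independent of $s'$, $t'$, and of the cocycle representative, and it is a homomorphism by the first two identities of Lemma \ref{lem:square_identities}.

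It remains to lift this to the Brauer group modulo $\Br(B)$, which needs more than the fibre-wise picture. If two choices $(s',t')$ and $(\tilde s',\tilde t')$ give classes with the same image in $H^1(B,\mathcal Pic_{X_s/B})$, they differ by an element of $\Br(B)$, since the Leray sequence is exact at $F^1\Br(X_s)$. So I would argue directly. Changing $s'_i$ to $s'_i+u_i$ with $u_i$ an $n$-torsion compatible family, or $t'_{ij}$ to another $n$-th root, only multiplies $M_{ij}$ by line bundles $\pi^*(\cdot)$ pulled back from $U_{ij}$. The Hitchin data are rescaled accordingly, and the new datum is the old one tensored with a $\bbG_m$-gerbe on $B$ pulled back along $\pi_{X_s}$. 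Likewise, a coboundary change of $t$ modifies $M_{ij}$ by $\transl_{s_{ij}}^*(\mathrm{id}\times r_j)^*\cP\otimes(\mathrm{id}\times r_i)^{*}\cP^{-1}$ up to base terms, which is a coboundary in the sense of Hitchin gerbes. For additivity, $M_{ij}(t_1+t_2)\simeq M_{ij}(t_1)\otimes M_{ij}(t_2)$ up to base factors, via the theorem of the square, using the root $t_1'+t_2'$. The sections $\theta_{ij}$ and $\theta_{ijk}$ match by property (5), so the gerbes multiply modulo $\Br(B)$.

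The second map $o'_s$ is handled by the same argument with the roles of $X$ and $Y$ swapped, using $N_{ij}$. Here $s$ is $n$-divisible and $t$ ranges over $H^1(B,\mathcal Y)[n]$. The asymmetric base factor $\pi^*(s'_j\times t'_{ji})^*\cP$ again only affects the class by $\Br(B)$.

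For the comparison, suppose $s$ and $t$ are both $n$-torsion and $n$-divisible. Both $o_s(t)$ and $o'_s(t)$ are gerbes on $X_s$ built from line bundles whose non-base part is $(\mathrm{id}\times t_{ij})^*\cP$. They differ only in the base factors $\pi^*(\cdot)^*\cP$ and in the resulting $\theta$'s. The ratio of the two Hitchin data is therefore a gerbe datum consisting of line bundles on $U_{ij}$ pulled back to $X_s$, so it is a class in the image of $\Br(B)$. The main obstacle is verifying that these rescalings and ratios genuinely satisfy the cocycle conditions (C1)--(C3) coming from $B$. That verification is a bookkeeping exercise with property (5) of the Arinkin sheaf, analogous to the proof of Lemma \ref{lem:alphagerbe}.
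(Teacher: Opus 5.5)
Your argument is correct and is essentially the paper's. The image of $o_s(t)$ in $H^1(B,\mathcal Pic_{X_s/B})$ is represented by $\{[(\mathrm{id}\times t_{ij})^*\cP]\}$, which is independent of the choices and additive by the theorem of the square. Exactness of $\Br(B)\to F^1\Br(X_s)\to H^1(B,\mathcal Pic_{X_s/B})$ then yields well-definedness, additivity and $o_s(t)\equiv o'_s(t)$ modulo $\Br(B)$ at once. So your gerbe-level verifications in the second and fourth paragraphs are redundant, not a missing step. They also contain a small slip: two admissible choices of $s'_i$ differ by a family gluing to an arbitrary global section of $\mathcal X$, not necessarily an $n$-torsion one, though this changes nothing.
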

\begin{proof}
    From the exact sequence
    $$\Br(B) \to F^1\Br(X_s) \to H^1(B, \mathcal Pic_{X_s})$$
    and the fact that
    $$[M_{ij}] \equiv [(\mathrm{id} \times t_{ij})^* \cP]\in H^0(U_{ij}, \mathcal Pic_{X_s})$$
    holds by construction, it follows that the map
    $$[o_s(t)] \in \Br(Y_s) / \Br(B)$$
   is independent of the additional choices. The theorem of the square then implies that it is a group homomorphism. A similar argument yields the remaining claims.
\end{proof}

\section{Twisted Fourier--Mukai transforms}
\label{sec:equiv}
In this section, we finish the proof of Theorem \ref{thm:main_abelian_schemes} by showing that the twisted Arinkin sheaf constructed in the previous section induces a Fourier--Mukai equivalence.

\begin{proposition}
    \label{prop:fullyfaithful}
    The twisted Fourier--Mukai transform
    $$\Phi_{{^{s,t} \cP}} \colon D^b(X_s, o_s(t)^{-1}) \to D^b(Y_t, o'_t(s))$$
    is an equivalence.
\end{proposition}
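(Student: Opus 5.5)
The strategy is to reduce the statement to property (6) of the Arinkin sheaf, which says that $\Phi_\cP \colon \Db(X) \to \Db(Y)$ is a $B$-linear equivalence. Being an equivalence is local over $B$ once the transform is known to be $B$-linear. First we check that $\Phi_{^{s,t}\cP}$ is well defined and $B$-linear. The kernel $^{s,t}\cP$ is $(o_s(t)\boxtimes o'_t(s))$-twisted by Lemma \ref{lem:twistedpoincare} and is supported on $X_s\times_B Y_t$. Pulling back an $o_s(t)^{-1}$-twisted complex along the first projection and tensoring with $^{s,t}\cP$ therefore gives a $(1\boxtimes o'_t(s))$-twisted complex, whose pushforward to $Y_t$ is $o'_t(s)$-twisted. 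Because the support lies over the diagonal of $B$, the functor commutes with $\pi^*(-)\otimes$ for complexes on $B$, so it is $B$-linear. The same reasoning applies to the twisted kernel $\cQ\coloneqq {^{s,t}\cP}^\vee\otimes p_X^*\omega_{X_s}[\dim X-\dim B]$ (up to the relative shift), which produces the left and right adjoints $\Phi_\cQ$ with the opposite twists.

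Next, we show that the unit $\mathrm{id}\to \Phi_\cQ\circ\Phi_{^{s,t}\cP}$ and the counit $\Phi_{^{s,t}\cP}\circ\Phi_\cQ\to\mathrm{id}$ are isomorphisms. The composition $\Phi_\cQ\circ\Phi_{^{s,t}\cP}$ is again a twisted Fourier--Mukai transform. Its kernel $\cK$ is an $(o_s(t)\boxtimes o_s(t)^{-1})$-twisted object on $X_s\times_B X_s$, and the unit is induced by a morphism $\cK_\Delta\to\cK$, where $\cK_\Delta$ is the diagonal kernel. We check that this morphism is an isomorphism étale locally on $B$. Over each $U_i$ the twists split: $X_s|_{U_i}\simeq X|_{U_i}$ and $Y_t|_{U_i}\simeq Y|_{U_i}$, the Brauer classes become trivial on these (their restrictions lie in $F^1$ and are trivialised by the chosen data), and $^{s,t}\cP$ restricts to $\cP|_{U_i}$ by construction in Proposition \ref{prop:gerbedesc}. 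Kernels of compositions are compatible with flat (étale) base change $U_i\to B$, so $\cK|_{U_i}$ is the kernel of $\Phi_{\cP^\vee[\ldots]}\circ\Phi_\cP$ over $U_i$. Since $\Phi_\cP$ is a $B$-linear equivalence, its base change to $U_i$ is also an equivalence. One way to see this is that the unit and counit are given by maps of kernels supported over $B$, and formation of these kernels commutes with flat base change. Hence the map of kernels is an isomorphism over every $U_i$, and because being an isomorphism is étale local, it is an isomorphism on $X_s\times_B X_s$. The counit is handled symmetrically.

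The main obstacle is justifying that the map of kernels which realises the unit in the twisted setting restricts over $U_i$ to the corresponding untwisted map. For this, we need the local identifications $\psi_{ij}$, $\theta_{ij}$ to be compatible with the adjunction morphisms. This compatibility holds because adjunction units are canonical and therefore commute with the gluing isomorphisms. A second point is that property (6) alone gives an equivalence only over $B$, whereas we use it over étale $U_i$. We would handle this by invoking the standard fact that a $B$-linear Fourier--Mukai equivalence between varieties that are projective over $B$ base changes to an equivalence, for example via the criterion that the kernel of $\Phi_\cQ\circ\Phi_\cP$ is $\cO_\Delta$, which is a statement about sheaves and so restricts along $U_i\to B$.
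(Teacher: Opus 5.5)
Your proposal is correct and follows essentially the same route as the paper. Both arguments get $B$-linearity from the support on $X_s\times_B Y_t$, take the adjoint, and show that the cones of the unit and counit vanish by checking étale locally over $B$, where ${}^{s,t}\cP$ becomes the untwisted Arinkin sheaf and property (6) applies. The paper phrases this through Perry's formalism of $B$-linear functors, which gives the $B$-linearity of the adjoint and the compatibility of unit and counit with base change for free. You instead work at the level of kernels and explicitly justify passing property (6) to étale opens $U_i\to B$, a step the paper leaves implicit.
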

\begin{proof}
        By construction, the twisted sheaf $^{s,t}\cP$ is supported on $X_s \times_B Y_t \subset X_s \times Y_t$. In particular, $F \coloneq \Phi_{{^{s,t} \cP}}$ defines a $B$-linear functor in the sense of \cite{noncommutativehpd}. If we let $G: D^b(Y_t, o_t(s)) \to D^b(X_s, o_s'(t)^{-1})$ denote its right adjoint, then $G$ is canonically a $B$-linear functor \cite[Lem.\ 2.11]{noncommutativehpd}. The unit and counit for this adjunction then give natural transformations of $B$-linear functors
    \[\mathrm{id}_{\Db(X_s,-o_s(t))} \to GF,\qquad FG \to \mathrm{id}_{\Db(Y_t,o_t(s))}.\]
    To check that the functor $F$ is an equivalence, it suffices to observe that \'etale-locally over the base $B$, the Fourier--Mukai kernel $^{s,t}\cP$ coincides with an untwisted Arinkin sheaf on $X \times_B Y$. Since the untwisted Arinkin sheaf induces a $B$-linear equivalence, it follows that over this \'etale open the cones of the unit and counit both vanish. Hence the cones must vanish globally over the whole base, so that $F$ and $G$ are quasi-inverse to one another.
\end{proof}

All in all, we have concluded the proof of Theorem \ref{thm:main_abelian_schemes} in the case $\gamma = 1 \in \Br(B)$. The general statement follows by observing that the Fourier--Mukai kernel ${}^{s, t} \cP$ is supported on $X_s \times_B Y_t$ and can thus also be endowed with the structure of a $\pr_1^*((o_s(t) \cdot \pi_{X_s}^* \gamma)^{-1}) \otimes \pr_2^*(o'_t(s) \cdot \pi^*_{Y_t} \gamma)$-twisted sheaf which is \'etale locally on $B$ isomorphic to the untwisted Poincar\'e sheaf. The argument in the proof of Proposition \ref{prop:fullyfaithful} then shows that the induced $B$-linear functor is an equivalence.

\section{Identifying certain Brauer classes}\label{sec:identifying_brauer}

In this section, we identify certain instances of the Brauer classes constructed in the previous sections in the setting of relative Jacobians of smooth curves.

Let $\mathcal C \to B$ be a family of proper smooth curves with smooth total space and base. Let $\alpha \in \SBr(\mathcal C)$ be a special Brauer class and let $\Pic^0_\alpha \coloneqq \Pic^0_\alpha(\mathcal C/ B)$ denote the relative moduli space of $\alpha$-twisted line bundles on fibers of $\mathcal C \to B$, see e.g. \cite{simpson}. In the following, assume that $\Pic^0_{\alpha}$ is non-empty. In particular, this implies that the restriction $\alpha|_{\mathcal C_t} = 0 \in \SBr(\mathcal C_t)$ to geometric fibers is trivial, cf. \cite[Rem.\ 3.3]{huybrechtsmattei}. Furthermore, $\Pic^0_\alpha$ is naturally a torsor under the abelian scheme $\Pic^0(\mathcal C/ B) \to B$.

Explicitly, this torsor can be described as follows: Fix an Azumaya algebra $\mathcal A$  representing $\alpha$. Since the moduli space $\Pic^0_\alpha$ is assumed to be non-empty, we find an \'etale cover $\{U_i\}$ of $B$, sections $c_i \colon U_i \to \mathcal C|_{U_i}$
and locally free $\mathcal A|_{U_i}$-modules $\cQ_i$ of rank one and degree zero. The untwisted line bundles $\cQ_i \otimes_{\cA|_{U_{ij}}} \cQ_j^\vee$ yield sections $\psi_{ij} \colon U_{ij} \to \Pic^0|_{U_{ij}}$ that define a $2$-cocycle representing $\Pic^0_{\alpha} \in H^1(B, \mathcal Pic^0_{\cC/B})$. Let $\psi_i \colon \Pic^0_\alpha|_{U_i} \to \Pic^0|_{U_i}$ denote the trivialization morphism corresponding to $- \otimes_{\mathcal A} \cQ_i^\vee$ with inverse $\psi_i^{-1} \colon \Pic^0|_{U_i} \to \Pic^0_{\alpha}|_{U_i}$ corresponding to $- \otimes \cQ_i$. In particular, the composition $\psi_j \circ \psi_i^{-1}$ is given by translation by $\psi_{ij}$.

Assuming that $\alpha \in \SBr(\mathcal C)$ is sufficiently divisible so that we can apply Theorem \ref{thm:main_abelian_schemes}, there are two natural constructions of twisted sheaves on the product $\cC \times \Pic^0_\alpha$: Since $\Pic^0_{\alpha}$ is a coarse moduli space of $\mathcal A$-modules, there exists a (twisted) universal object
$^{\alpha}\mathcal L$ on $\mathcal C \times \Pic^0_{\alpha}$, which we can interpret as an $\alpha \boxtimes \theta$-twisted sheaf, where $\theta$ is the obstruction to gluing the local universal objects (which do indeed exist since locally over the base, $\Pic^0_{\alpha}$ is a fine moduli space). On the other hand, we constructed an $o_1(\alpha) \boxtimes o_{\alpha}(1)$-twisted Poincar\'e sheaf $^{1, \alpha} \mathcal P$ on $\Pic^1 \times \Pic^0_{\alpha}$, which we may pull back along the Abel--Jacobi map $\cC \times \Pic_\alpha^0 \to \Pic^1 \times \Pic_\alpha^0.$

The aim of this sections is to compare the twists $\alpha$ and $o_1(\alpha)$, as well as the class $o_{\alpha}(1)$ and the obstruction class $\theta$. Without fixing the additional choices necessary for our construction, the classes $o_1(\alpha)$ and  $o_\alpha(1)$ are only well-defined up to twisting by Brauer classes pulled back from the base $B$. It is thus natural to compare the resulting Brauer classes just modulo $\Br(B)$.
On that note, recall that for a fibration $f \colon X \to B$, the Leray spectral sequence induces the exact sequence
$$\Br(B) \to F^1\Br(X) \to H^1(B, \mathcal Pic_{X/B}).$$
In other words, comparing Brauer classes modulo  classes pulled backed from the base is equivalent to comparing their images in $H^1(B, \mathcal Pic_{X/B})$.

Recall that we let $\overline{\alpha} \in \Br(\mathcal C)$ denote the image of $\alpha \in \SBr(\mathcal C)$ under the natural map $\SBr(\mathcal C) \to \Br(\mathcal C)$.

\begin{proposition}\label{lem:twist}
    The images of $o_1(\alpha)$ and $\overline{\alpha}$ in $H^1(B, \mathcal Pic_{\mathcal C})$ coincide.
\end{proposition}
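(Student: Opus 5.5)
Both classes will be computed as Čech cocycles in $H^1(B,\mathcal Pic_{\mathcal C/B})$ on the same étale cover $\{U_i\}$ used to describe $\Pic^0_\alpha$, and we will check that the cocycles agree class by class in $\Pic(\mathcal C|_{U_{ij}})/\Pic(U_{ij})$.

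\emph{The class $o_1(\alpha)$.} The twist of $\Pic^1$ is trivial, so $s=0$, and we may take $s_i'=0$. The correction terms $\pi^*(s_i'\times t'_{ij})^*\cP$ are then trivial, and
$$M_{ij}=(\mathrm{id}\times \psi_{ij})^*\cP.$$
By the lemma identifying the Leray image of a gerbe built from $\{L_{ij}\}$, the image of $o_1(\alpha)$, pulled back along the Abel--Jacobi map $\cC\to\Pic^1$, is represented by the classes of $AJ^*(\mathrm{id}\times\psi_{ij})^*\cP$. By the defining property of the Poincaré bundle (seesaw/autoduality of Jacobians), this pullback is, up to a line bundle from $U_{ij}$, the line bundle on $\cC|_{U_{ij}}$ corresponding to the section $\psi_{ij}$. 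Concretely, it is $\cQ_i\otimes_{\cA}\cQ_j^\vee$, or its inverse depending on the sign convention of Abel--Jacobi. So the image of $o_1(\alpha)$ is the cocycle $\{[\cQ_i\otimes_\cA\cQ_j^\vee]\}$, viewed in $\mathcal Pic_{\cC/B}$ via $\Pic^0\subset\mathcal Pic$. One must also fix the normalization sign $\pm$ (inverse or not) consistently with the conventions of Theorem \ref{thm:main_abelian_schemes}, and check that pulling back along Abel--Jacobi commutes with the Leray map. The latter is functoriality of the Leray spectral sequence for the $B$-morphism $\cC\to\Pic^1$.

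\emph{The class $\overline{\alpha}$.} Since $\alpha$ restricts trivially to the fibers, $\overline\alpha\in F^1\Br(\cC)$. I will describe its gerbe in the form of Proposition \ref{prop:gerbedesc} with $s=0$: over $\cC|_{U_i}$ the $\cA$-modules $\cQ_i$ are locally free of rank one, so $\cA|_{U_i}\simeq\mathcal End(\cQ_i)$ in the twisted sense. Hence $\overline\alpha$ is trivialized on each $\cC|_{U_i}$, and the transition data between these trivializations are exactly the line bundles $L_{ij}=\cQ_i\otimes_\cA\cQ_j^\vee$ on $\cC|_{U_{ij}}$. The sections $\theta_{ij},\theta_{ijk}$ come from the evaluation maps $\cQ_j\otimes_\cA \cQ_j^\vee\to\cO$. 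Again by the Leray lemma, the image of $\overline\alpha$ in $H^1(B,\mathcal Pic_\cC)$ is $\{[\cQ_i\otimes_\cA\cQ_j^\vee]\}$. Comparing with the previous step gives the claim.

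\emph{Main obstacle.} The hard step is the identification $AJ^*(\mathrm{id}\times\psi_{ij})^*\cP\simeq \cQ_i\otimes_\cA\cQ_j^\vee$ modulo $\Pic(U_{ij})$, with the correct sign. I would first verify it fiberwise using the classical fact that $AJ^*\cP_L\simeq L^{\pm1}$ for $L\in\Pic^0(C)$. Since $\mathcal C\to U_{ij}$ has connected proper fibers, the seesaw principle then gives the isomorphism up to a pullback from $U_{ij}$, which is exactly the ambiguity allowed. If the sign comes out inverted, the statement should be read with the inverse convention for $\psi_i$, and the comparison still holds.
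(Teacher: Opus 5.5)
Your strategy is the paper's: compute both classes as \v{C}ech cocycles on $\{U_i\}$, identify the cocycle of $a^*o_1(\alpha)$ with $\{[\cQ_i\otimes_{\cA}\cQ_j^\vee]\}$ because the Poincar\'e sheaf pulled back along a based Abel--Jacobi map is a universal family, and identify the cocycle of $\overline\alpha$ with the same collection. Your treatment of $\overline\alpha$ (trivializing $\cA|_{\cC_{U_i}}\simeq\mathcal End(\cQ_i)$ and reading off the transition line bundles) is more explicit than the paper. The paper simply invokes that the Leray image of $\alpha$ is the torsor class $[\Pic^0_\alpha]$.

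There is, however, a wrong step in your treatment of $o_1(\alpha)$.
\begin{itemize}
\item[$\circ$] \emph{The error.} The twist $s=[\Pic^1]\in H^1(B,\mathcal Pic^0_{\cC/B})$ is not trivial in general. It vanishes only if $\Pic^1\to B$ has a section, while $\cC\to B$ is only assumed to have the \'etale-local sections $c_i$.
\item[$\circ$] \emph{Why it matters.} $AJ^*(\mathrm{id}\times\psi_{ij})^*\cP$ is not meaningful as written. $M_{ij}$ lives on $\Pic^0|_{U_{ij}}$, and the Leray lemma requires you to transport it to $\Pic^1|_{U_{ij}}$. For that you must use the $i$-th chart $\transl_{\cO(-c_i)}\colon \Pic^1|_{U_i}\xrightarrow{\sim}\Pic^0|_{U_i}$, whose gluing data are $s_{ij}=\cO(c_i-c_j)$. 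The correct representative is
\[
((\transl_{\cO(-c_i)}\circ a)\times\psi_{ij})^*\cP,
\]
exactly as in the paper.
\item[$\circ$] \emph{The repair is cheap.} The correction term $\pi^*(s_i'\times t'_{ij})^*\cP$ is pulled back from $U_{ij}$ for any admissible $s_i'$, so you never need $s_i'=0$. Changing the chart alters the bundle only by a pullback from $U_{ij}$, by the third identity of Lemma~\ref{lem:square_identities}. Your fiberwise-plus-seesaw argument then applies verbatim, since over $b\in U_{ij}$ the map $\transl_{\cO(-c_i)}\circ a$ is the Abel--Jacobi map based at $c_i(b)$.
\item[$\circ$] \emph{How the paper phrases it.} It uses local universal bundles: $((\transl_{\cO(-c_i)}\circ a)\times\mathrm{id})^*\cP$ agrees with ${}^0\cL_i$ up to a pullback from $\Pic^0|_{U_i}$, and $(\mathrm{id}\times\psi_{ij})^*({}^0\cL_i)\equiv\cQ_i\otimes_{\cA}\cQ_j^\vee$ modulo $\Pic(U_{ij})$.
\end{itemize}

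Your fallback for the sign also does not work. The cocycle $\{\psi_{ij}\}$ is determined up to coboundary by the $\Pic^0$-action on $\Pic^0_\alpha$. Inverting it replaces the torsor $\Pic^0_\alpha$ by its inverse, which changes the statement. The sign has to be read off from the normalization of $\cP$; it cannot be chosen.
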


\begin{proof}
    Let us begin by describing the image of $\alpha.$ As mentioned in the beginning of this section, the image of $\alpha$ in $H^1(B, \mathcal Pic_{\cC /B})$ coincides with the torsor induced by $\Pic^0_{\alpha}$ and is thus represented by the $2$-cocycle
    $$\{\cQ_i \otimes_{\cA|_{U_{ij}}} \cQ_j^\vee\}$$

    Let ${}^0 \cL_i \in \Pic(\cC \times_B \Pic^0|_{U_i})$ denote local universal line bundles of degree zero on $\cC$, which exist as $\cC|_{U_i}$ admits a section. Of course, ${}^0 \cL_i$ is well-defined only up to tensoring by lines bundles pulled back from $\Pic^0|_{U_i}$. By definition of the sections $\psi_{ij}$ and the universality of $^{0} \cL_i$, we have
    $$(\mathrm{id} \times \psi_{ij})^*({}^0 \mathcal L_{i}) \equiv \mathcal Q_i \otimes_{\mathcal A_{ij}} \mathcal Q_j^\vee \mod \Pic(B).$$

    On the other hand, the image of $o_1(\alpha)$ is by construction represented by the $2$-cocycle given by
    $$((\mathrm{\transl_{\cO(-c_i)} \circ a})\times \psi_{ij})^*({^{0, 0} \cP)} \simeq (\mathrm{id} \times \psi_{ij})^*({}^0 \mathcal L_i)\otimes \mathrm{pr}_B^*(c_i \times\mathrm{id})^*({}^0 \cL_i)^\vee,$$
    which represents the same element of $H^1(B, \mathcal Pic_{\cC / B}).$
\end{proof}

\begin{proposition}\label{lem:obstruction}
    The image of $o_{\alpha}(1)$ in $H^1(B, \mathcal Pic_{\Pic^0_{\alpha}})$ agrees, up to sign, with the obstruction class to the existence of a universal $\alpha$-twisted sheaf on $\mathcal C \times \Pic^0_{\alpha}$.
\end{proposition}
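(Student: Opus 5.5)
Both classes lie in $F^1\Br(\Pic^0_\alpha)$. By the exact sequence $\Br(B)\to F^1\Br(\Pic^0_\alpha)\to H^1(B,\mathcal Pic_{\Pic^0_\alpha/B})$, it therefore suffices to compute both images as \v{C}ech $1$-cocycles of relative line bundles on the same \'etale cover $\{U_i\}$ used above, and to show that they agree up to sign. For $o_\alpha(1)$ this is immediate from the construction and from the lemma identifying the Leray map with $\{[L_{ij}]\}$. Here $X=\Pic^1$ (or $\Pic^0$) with the trivial torsor $s=1$, so $s_{ij}=0$, and $Y=\Pic^0$ twisted by the class $\{\psi_{ij}\}$. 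The relevant line bundles are therefore the $N_{ij}$. With $s_{ij}=0$, the factor $(s_{ij}\times\mathrm{id})^*\cP$ is trivial by the normalization isomorphisms, and what remains is $\pi^*(s_j'\times t'_{ji})^*\cP\in\Pic(U_{ij})$. Hence the image of $o_\alpha(1)$ is represented by line bundles pulled back from $U_{ij}$, modulo $\Pic(U_{ij})$. Read naively, this makes the image trivial. The comparison must therefore be carried out on $\Pic^0_\alpha$ itself, using the trivializations $\psi_i$; the plan is to redo the computation with these, so that $N_{ij}$ becomes $(\psi_{ij}\text{-translation})$-twisted data on $\Pic^0$.

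Concretely, I would first describe the obstruction class $\theta$. Over $U_i$, a local universal $\alpha$-twisted sheaf is ${}^\alpha\cL_i\coloneqq(\mathrm{id}\times\psi_i)^*({}^0\cL_i)\otimes\cQ_i$. On overlaps, ${}^\alpha\cL_i$ and ${}^\alpha\cL_j$ differ by $\pr_2^*$ of a line bundle $T_{ij}$ on $\Pic^0_\alpha|_{U_{ij}}$, and $\theta$ is the image of the cocycle $\{T_{ij}\}$. Using $\psi_j\circ\psi_i^{-1}=\transl_{\psi_{ij}}$, I would compute $T_{ij}$ via the fiberwise analogue of the third identity of Lemma~\ref{lem:square_identities} applied to ${}^0\cL$, namely $(\mathrm{id}\times\transl_{\psi_{ij}})^*{}^0\cL\simeq{}^0\cL\otimes\pr_1^*(\mathrm{id}\times\psi_{ij})^*{}^0\cL$ up to base. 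Combined with the identification $(\mathrm{id}\times\psi_{ij})^*{}^0\cL_i\equiv\cQ_i\otimes_{\cA}\cQ_j^\vee$ from the proof of Proposition~\ref{lem:twist}, this should show that the $\cC$-dependence cancels. One then finds $T_{ij}\equiv\psi_i^*\bigl((c\times\mathrm{id})^*{}^0\cL\bigr)^{\pm1}$ restricted appropriately, i.e. $T_{ij}$ is the line bundle on $\Pic^0$ obtained by pulling $\cP$ back along $\psi_{ij}$ viewed in the first factor, modulo $\Pic(U_{ij})$.

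The second step is to match this with $N_{ij}=(s_{ij}\times\mathrm{id})^*\cP\otimes(\dots)$ for the roles as set up in Proposition~\ref{lem:twist}. There the Poincar\'e sheaf on $\Pic^1\times\Pic^0_\alpha$ is pulled back along the Abel--Jacobi map composed with $\transl_{\cO(-c_i)}$, and the Brauer class on the second factor is governed by translation by $\psi_{ij}$. By the autoduality of the Jacobian, $(\psi_{ij}\times\mathrm{id})^*\cP\simeq(\mathrm{id}\times\psi_{ij})^*\cP$ as line bundles on $\Pic^0$, and the Abel--Jacobi pullback of $\cP$ is ${}^0\cL$ up to base. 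Therefore both cocycles are $\{[(\psi_{ij}\times\mathrm{id})^*\cP]\}$ up to inversion. The sign arises because the twisted universal sheaf has twist $\alpha\boxtimes\theta$ while ${}^{1,\alpha}\cP$ is $o_1(\alpha)^{-1}$-dual on the first factor in Proposition~\ref{prop:fullyfaithful}; equivalently, it comes from $\psi_{ij}$ versus $\psi_{ji}=-\psi_{ij}$.

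I expect the main obstacle to be keeping track of which translation acts on which factor, so that the $\cC$-dependent terms genuinely cancel, and pinning down the sign. For this I would work fiber-by-fiber over a point of $U_{ij}$, where everything reduces to the classical identity on a Jacobian. The base-pulled-back terms are then discarded, since we only work modulo $\Pic(U_{ij})$.
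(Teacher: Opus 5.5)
There is a genuine gap, and it starts with the torsor you plug in. The argument $1$ in $o_\alpha(1)$ is not the trivial class. The kernel ${}^{1,\alpha}\cP$ lives on $\Pic^1\times\Pic^0_\alpha$, so $X=\Pic^0$ is twisted by $s=[\Pic^1]\in H^1(B,\mathcal Pic^0_{\cC/B})$. On the given cover this class is represented by $s_{ij}=\cO(c_i-c_j)$, the differences of the local sections $c_i$ of $\cC\to B$. With your choice $s_{ij}=0$, the $N_{ij}$ are pulled back from $U_{ij}$, as you observe, so the proposition would be false as soon as the obstruction class is nonzero in $\Br(\Pic^0_\alpha)/\Br(B)$. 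Your proposed repair cannot work: $t_{ij}=\psi_{ij}$ enters $N_{ij}$ only through the factor $\pi^*(s_j'\times t'_{ji})^*\cP$, which comes from $U_{ij}$, so rewriting via the $\psi_i$ does not change the class. The correct cocycle is $N_{ij}\equiv\psi_i^*(\cO(c_i-c_j)\times\mathrm{id})^*\cP$ modulo $\Pic(U_{ij})$. In particular, your final identification of both classes with $\{[(\psi_{ij}\times\mathrm{id})^*\cP]\}$ is wrong. That cocycle is the image of $[\Pic^0_\alpha]$ in $H^1(B,\mathcal Pic_{\Pic^0_\alpha/B})$, whereas $o_\alpha(1)$ corresponds to the image of $[\Pic^1]$, and these differ in general.

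The same problem appears on the obstruction side. You use a single universal bundle ${}^0\cL$, and then your two inputs only give ${}^\alpha\cL_j\simeq{}^\alpha\cL_i$ up to line bundles from $U_{ij}$, i.e.\ a trivial obstruction. (The two inputs are the square identity for $\transl_{\psi_{ij}}$ and $(\mathrm{id}\times\psi_{ij})^*{}^0\cL\equiv\cQ_i\otimes_{\cA}\cQ_j^\vee$.) The missing idea is that ${}^0\cL_i$ exists only over $U_i$ and is normalized along $c_i$: it is the pullback of $\cP$ along $(\transl_{\cO(-c_i)}\circ a)\times\mathrm{id}$. Hence on $U_{ij}$ one has ${}^0\cL_j\simeq{}^0\cL_i\otimes\pr_2^*((c_j\times\mathrm{id})^*{}^0\cL_i)^\vee$. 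This change of normalization is exactly where the sections $c_i$, and hence $[\Pic^1]$, enter. Including it, your computation gives $T_{ij}\equiv\psi_j^*(c_j\times\mathrm{id})^*{}^0\cL_i\equiv\psi_i^*(\cO(c_j-c_i)\times\mathrm{id})^*\cP$ modulo $\Pic(U_{ij})$, which is the inverse of the $o_\alpha(1)$-cocycle. So the sign comes from $c_j-c_i$ versus $c_i-c_j$, not from the $\alpha\boxtimes\theta$ versus $o_1(\alpha)^{-1}$ bookkeeping you suggest. The paper reaches the same expression more directly. It restricts the gluing isomorphism ${}^\alpha\cL_i\simeq{}^\alpha\cL_j\otimes\pr_2^*T_{ij}$ along $c_j$, where $c_j^*(\cQ_i\otimes_{\cA}\cQ_j^\vee)$ comes from the base, and then uses the normalizations. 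Your overall framework of comparing \v{C}ech cocycles in $H^1(B,\mathcal Pic_{\Pic^0_\alpha/B})$ is the right one. What has to be fixed is the cocycle $s_{ij}$ and the normalization change of the local universal bundles.
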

\begin{proof}
    Let us first describe the image of $o_{\alpha}(1)$. By construction, the image of $o_{\alpha}(1)$ is represented by
    $$(\mathcal O(c_i-c_j) \times \psi_i)^*({}^{0, 0} \cP) \simeq (c_i \times \psi_i)^*({}^0 \cL_i) \otimes (c_j \times \psi_i)^*({}^0 \cL_i)^\vee. $$
    In order to go on, we have to describe the obstruction to the existence of a universal object. Locally, over $U_i$, it can be described as follows:
    $${}^\alpha \cL_i \coloneqq (\mathrm{id} \times \psi_i)^*({}^0 \cL_i) \otimes \pr_1^* \cQ_i,$$
    inheriting the $\pr_1^*\mathcal A$-module structure from $\cQ_i$. Note that this is a universal rank one and degree zero $\mathcal A|_{U_{i}}-$module. The obstruction to glueing the ${}^\alpha \cL_i$ to a global object ${}^\alpha \cL$ is thus given by a Brauer class pulled back from $\Pic^0_{\alpha}$, which can be represented by a collection of line bundles $O^{\alpha}_{ij} \in \Pic(\Pic^0_{\alpha})$, so that
    \[^\alpha\cL_i \simeq {^\alpha\cL_j} \otimes \mathrm{pr}_2^*O_{ij}^\alpha.\]
    By restriction along the section $\Pic_\alpha^0 \xrightarrow{c_i \times \mathrm{id}} \cC \times_B \Pic^0_\alpha$, we have
    $$(c_j \times \mathrm{id})^*({}^{\alpha} \cL_i) \equiv (c_j \times \mathrm{id})^*({}^\alpha \cL_j) \otimes O^{\alpha}_{ij} \mod \Pic(B).$$ We may thus rewrite the line bundles $O_{ij}^\alpha$ as
    \begin{align*}
        O^{\alpha}_{ij} & \equiv(c_j \times \psi_i)^*({}^0 \cL_i) \otimes (c_j \times \psi_j)^*({}^0 \cL_j) ^\vee \otimes (c_j \times \psi_i)^*\mathrm{pr}_1^* (\cQ_i \otimes_\cA \cQ_j^\vee) \\
                        & \equiv  (c_j \times \psi_i)^*({}^0 \cL_i) \otimes (c_j \times \psi_j)^*({}^0 \cL_j)^\vee \otimes \pr_B^* c_j^*(\cQ_i \otimes_{\cA} \cQ_j^\vee)                      \\
                        & \equiv (c_j \times \psi_i)^*({}^0 \cL_i) \otimes (c_j \times \psi_j)^*({}^0 \cL_j)^\vee \mod \Pic(B)
    \end{align*}

    It follows that the image of the obstruction class in $H^1(B, \mathcal Pic_{\Pic^0_{\alpha}})$ is represented by
    $$(c_j \times \psi_i)^*({}^0 \cL_i) \otimes (c_j \times \psi_j)^*({}^0 \cL_j)^\vee \in H^1(B, \mathcal Pic_{\Pic^0_{\alpha}}),$$
    which allows us to conclude.
\end{proof}

Similarly, we constructed an $o_n(\alpha) \boxtimes o_{\alpha}(1)$-twisted Poincar\'e sheaf $^{n, \alpha} \cP$ on $\Pic^n \times \Pic^\alpha$ for any $n \in \mathbb Z$. In the following, fix $n \geq 1$ and let $a^{(n)} \colon \mathcal C^{(n)} \to \Pic^n$ denote the relative Abel--Jacobi map. If $\mathcal L$ is a line bundle on $\mathcal C$, then the line bundle $\prod_{i = 1}^n \pr_{i} \mathcal L$ admits a natural $\mathfrak S_n$-linearization and descends to a line bundle $\mathcal L^{(n)}$ on $\mathcal C^{(n)}$. Similarly, if $\beta \in \Br(\mathcal C)$ is a Brauer class, then the Brauer class $\prod_{i = 1}^n \pr_{i} \beta$ descends to a Brauer class $\beta^{(n)}$, cf. \cite[Sec.\ 3.1]{huybrechts2025periodindexproblemhyperkahlermanifolds}. Note that the two constructions are compatible in the following sense: if $\beta \in \Br(\mathcal C)$ is represented by a collection of line bundles $\{\mathcal L_{ij}\}$ as in Section \ref{sec:gerbe},  then $\beta^{(n)}$ is represented by the collection of line bundles $\{\mathcal L_{ij}^{(n)}\}$. In particular, the diagram
$$\begin{tikzcd}
\Br(\mathcal C) \arrow[r] \arrow[d, "(-)^{(n)}"] & {H^1(B, \mathcal Pic_{\mathcal C})} \arrow[d, "(-)^{(n)}"] \\
F^1\Br(\mathcal C^{(n)}) \arrow[r]               & {H^1(B, \mathcal Pic_{\mathcal C^{(n)}})}                 
\end{tikzcd}$$
commutes.

\begin{proposition}\label{lem:hilbert}
    We have
    $$(a^{(n)})^*o_n(\alpha) \equiv \overline{\alpha}^{(n)}\mod \Br(B).$$
\end{proposition}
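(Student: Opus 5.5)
Both sides are fiberwise trivial Brauer classes on $\mathcal C^{(n)}$, so by the exact sequence $\Br(B) \to F^1\Br(\mathcal C^{(n)}) \to H^1(B, \mathcal Pic_{\mathcal C^{(n)}/B})$ it is enough to show that their images in $H^1(B, \mathcal Pic_{\mathcal C^{(n)}/B})$ coincide. I will compute both images as explicit cocycles with respect to the étale cover $\{U_i\}$, the sections $c_i$ and the modules $\cQ_i$ fixed at the beginning of this section.

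For $\overline{\alpha}^{(n)}$, I use the commutative square preceding the statement together with Proposition \ref{lem:twist}. They reduce the problem to computing $(-)^{(n)}$ applied to the image of $\overline{\alpha}$ in $H^1(B, \mathcal Pic_{\cC/B})$. By the proof of Proposition \ref{lem:twist}, that image is represented modulo $\Pic(U_{ij})$ by
$$(\mathrm{id}\times\psi_{ij})^*({}^0\cL_i) \equiv \cQ_i\otimes_{\cA}\cQ_j^\vee.$$
Hence $\overline{\alpha}^{(n)}$ is represented by the cocycle $\{(\cQ_i\otimes_\cA\cQ_j^\vee)^{(n)}\}$, that is, by the descent of $\boxtimes_{k=1}^n \pr_k^*(\cQ_i\otimes_\cA\cQ_j^\vee)$ to $\cC^{(n)}|_{U_{ij}}$.

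For $(a^{(n)})^*o_n(\alpha)$, the construction of Section \ref{sec:twistedpc} with $t=\alpha$ shows that $o_n(\alpha)$ is represented modulo $\Pic(U_{ij})$ by the line bundles $(\mathrm{id}\times\psi_{ij})^*\cP$ on $\Pic^n|_{U_{ij}}$. The correction terms $\pi^*(s_i'\times t'_{ij})^*\cP$ are pulled back from the base and therefore drop out. Pulling back along $a^{(n)}$ to $\cC^{(n)}$, I then have to identify $(a^{(n)}\times\psi_{ij})^*\cP$ with $((\mathrm{id}\times\psi_{ij})^*{}^0\cL_i)^{(n)}$ modulo $\Pic(U_{ij})$. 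To do this, I trivialize $\Pic^n|_{U_i}\simeq \Pic^0|_{U_i}$ by translation by $\cO(-nc_i)$ and compare the pullback of $\cP$ along $\cC^n \to \cC^{(n)} \to \Pic^n$.

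This identification is the main step, and I expect it to be the obstacle. The map $\cC^n\to\Pic^n$ is the sum of $n$ copies of the degree-one Abel--Jacobi map $a=a^{(1)}$. So the first identity of Lemma \ref{lem:square_identities} (theorem of the square, applied in the $\Pic^n$ factor with $Y$-sections fixed, i.e.\ its symmetric version) gives
$$(\textstyle\sum_k a\circ\pr_k \times \psi_{ij})^*\cP \simeq \bigotimes_k \pr_k^*(a\times\psi_{ij})^*\cP.$$
The right-hand side is $\mathfrak S_n$-equivariant, so it descends to $((a\times\psi_{ij})^*\cP)^{(n)}$. For $n=1$, the last display of the proof of Proposition \ref{lem:twist} identifies $(a\times\psi_{ij})^*\cP$ with $(\mathrm{id}\times\psi_{ij})^*{}^0\cL_i$ up to a line bundle from $U_{ij}$. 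The delicate point is checking that the $\mathfrak S_n$-linearization induced by the square isomorphisms matches the natural one. If they differ, they do so by a character of $\mathfrak S_n$ with values in $\cO(U_{ij})^\times$, and this can be absorbed modulo $\Pic(U_{ij})$. Injectivity of $\Pic(\cC^{(n)})\to\Pic(\cC^n)^{\mathfrak S_n}$ up to such characters then gives equality of the two cocycles in $H^1(B,\mathcal Pic_{\cC^{(n)}/B})$, which proves the claim.
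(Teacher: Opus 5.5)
Your argument is essentially the paper's proof. Both pass to images in $H^1(B,\mathcal Pic_{\mathcal C^{(n)}/B})$, represent $\overline{\alpha}^{(n)}$ by $\{((\mathrm{id}\times\psi_{ij})^*\,{}^0\cL_i)^{(n)}\}$ and $(a^{(n)})^*o_n(\alpha)$ by $\{((\transl_{\cO(-nc_i)}\circ a^{(n)})\times\psi_{ij})^*\cP\}$, and then match the two. The paper simply invokes Arinkin's description of $\cP$ for this last identification. You instead derive it from the case $n=1$ (the final display in the proof of Proposition~\ref{lem:twist}) via the square isomorphisms on $\mathcal C^n$ and descent to $\mathcal C^{(n)}$. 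That is a sound and more explicit justification.

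One sentence in your treatment of the linearization needs correcting, although the conclusion survives. A nontrivial character of $\mathfrak S_n$ could \emph{not} be absorbed modulo $\Pic(U_{ij})$. Twisting the permutation linearization of $\bigotimes_k\pr_k^*L$ by the sign character makes a transposition act by $-1$ on the fibres over the diagonal, so the result does not descend to a line bundle on $\mathcal C^{(n)}$ at all.

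What is true is that no discrepancy can occur. Since $\cO(\mathcal C^n|_{U_{ij}})^\times=\cO(U_{ij})^\times$, two linearizations of the same line bundle differ by a character $\chi\colon\mathfrak S_n\to\cO(U_{ij})^\times$. Both the canonical linearization of a pullback from $\mathcal C^{(n)}$ and the permutation linearization act trivially on the fibre at any point fixed by a transposition. Such points lie over every point of $U_{ij}$, so $\chi$ kills all transpositions and is trivial. Equivalently, $\Pic(\mathcal C^{(n)}|_{U_{ij}})\to\Pic(\mathcal C^n|_{U_{ij}})$ is injective, not merely injective ``up to characters''. Hence the isomorphism of pullbacks to $\mathcal C^n$ from the theorem of the square already gives the required identity on $\mathcal C^{(n)}$ modulo $\Pic(U_{ij})$, without tracking linearizations.
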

\begin{proof}
    By construction, the image of $\overline{\alpha}^{(n)}$ in $H^1(B, \mathcal Pic_{\mathcal C^{(n)}})$ is represented by
    $$\{((\mathrm{id} \times \psi_{ij})^*({}^0 \cL))^{(n)}\} \in H^1(B, \mathcal Pic_{\mathcal C^{(n)}}),$$
    where we again observe that $^0\cL_i$ (and therefore $(^0\cL_i)^{(n)}$) is a line bundle when restricted along the section $\cC^{(n)} \to \Pic^g \times_B \Pic^0.$
    \par
    On the other hand, the image of $(a^{(n)})^*o_n(\alpha)$ in $H^1(B, \mathcal Pic_{\mathcal C^{(n)}})$ is represented by
    $$\{((\tau_{\mathcal O(-nc_i)} \circ a^{(n)}) \times \psi_{ij})^*({}^{0, 0} \cP) \}.$$
    By Arinkin's description of ${}^{0, 0} \cP$ in \cite{arinkin}, we have
    \begin{align*}((\tau_{\mathcal O(-nc_i)} \circ a^{(n)}) \times \psi_{ij})^*({}^{0, 0} \cP) &\equiv ((\mathrm{id} \times \psi_{ij})^*({}^0 \cL_i))^{(n)} \mod \Pic(B),\end{align*}
   which allows us to conclude.
\end{proof}

In order to apply the results of this section to compactifications of $\Pic^0_{\alpha}$, we note the following:

\begin{lemma}\label{lem:restriction_brauer}
    Let $f \colon X \to B$ be a morphism between smooth varieties with geometrically integral fibers. Let $U \subset B$ be an open subscheme.
    Then, the natural map
    $$\Br(X) / \Br(B) \to \Br(f^{-1}(U)) / \Br(U)$$
    is injective.
\end{lemma}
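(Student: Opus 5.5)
We reduce the statement to two standard facts: injectivity of the Brauer group on dense opens of smooth varieties, and the Leray description of $F^1\Br$. Let $\beta \in \Br(X)$ with $\beta|_{X_U} = f^*\gamma_U$ for some $\gamma_U \in \Br(U)$, where $X_U \coloneqq f^{-1}(U)$. We must produce $\gamma \in \Br(B)$ with $\beta = f^*\gamma$. We may assume $U$ is nonempty (otherwise there is nothing to prove), so $U$ is dense in each component it meets; we work one component of $B$ at a time.

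\emph{Step 1: extend $\gamma_U$ over $B$.} Since $B$ is smooth, $\Br(B) \hookrightarrow \Br(K(B))$, and by purity a class on $U$ extends to $B$ iff its residues along the divisorial points of $B \setminus U$ vanish; points of codimension $\geq 2$ are harmless by purity for the smooth $B$. So let $D \subset B$ be a prime divisor in $B\setminus U$ with generic point $\eta_D$, and let $E = f^{-1}(D)$. Because the fibers are geometrically integral, $E$ is irreducible, generically reduced, and $f$ induces an extension of discrete valuation rings $\mathcal O_{B,\eta_D} \to \mathcal O_{X,\eta_E}$ that is unramified (ramification index one, since the fiber is reduced) and whose residue field extension $\kappa(D) \subset \kappa(E)$ is the function field of the geometrically integral generic fiber of $E\to D$, hence regular, so $\kappa(D)$ is algebraically closed in $\kappa(E)$. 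Compatibility of residues with unramified extensions gives $\partial_E(f^*\gamma_U) = f_E^*\partial_D(\gamma_U)$ in $H^1(\kappa(E), \mathbb Q/\mathbb Z)$ (the prime-to-$p$ part, everything being over $\mathbb C$). The left side vanishes since $f^*\gamma_U = \beta|_{X_U}$ extends over $X$. Injectivity of $H^1(\kappa(D), \mathbb Q/\mathbb Z) \to H^1(\kappa(E), \mathbb Q/\mathbb Z)$ follows because $\kappa(D)$ is algebraically closed in $\kappa(E)$: a nontrivial cyclic extension of $\kappa(D)$ stays a field after base change to $\kappa(E)$. Hence $\partial_D(\gamma_U)=0$ for all such $D$, and $\gamma_U$ extends to $\gamma \in \Br(B)$.

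\emph{Step 2: compare on $X$.} Now $\beta - f^*\gamma \in \Br(X)$ restricts to zero on the dense open $X_U$. Since $X$ is smooth, $\Br(X) \to \Br(X_U)$ is injective (both inject into $\Br(K(X))$), so $\beta = f^*\gamma$, proving injectivity.

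The main obstacle is Step 1, namely the residue compatibility and the injectivity on $H^1$ of residue fields. This is exactly where the geometric integrality of fibers enters; without it, multiple or non-reduced fibers would give counterexamples, as the elliptic surface literature shows. I would cite the standard Gysin/purity sequence $0\to\Br(B)\to\Br(U)\to\bigoplus_D H^1(\kappa(D),\mathbb Q/\mathbb Z)$ for smooth $B$ over $\mathbb C$, together with its functoriality under $f$, rather than reprove it.
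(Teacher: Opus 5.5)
Your proof is correct and follows the same route as the paper. The paper runs the identical diagram chase through the Gysin/purity sequences for $f^{-1}(U)\subset X$ and $U\subset B$, reducing to injectivity of $H^1(k(D_i),\mathbb Q/\mathbb Z)\to H^1(k(f^{-1}(D_i)),\mathbb Q/\mathbb Z)$ via algebraic closedness of $k(D_i)$ in the function field of the fiber; your Steps 1 and 2 just unpack that chase and usefully add the check that the ramification index is one. One small inaccuracy, which the paper shares tacitly: without flatness $f^{-1}(D)$ need not be irreducible (e.g.\ a line through the centre of the blow-up of $\mathbb A^2$ at a point), but your argument only uses the unique component of $f^{-1}(D)$ dominating $D$, whose generic point is that of the integral fiber $X_{\eta_D}$, so nothing changes.
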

\begin{proof}
    Let $D_i \in B^{(1)}$ denote the divisorial components of the complement of $U \subset B$. The Gysin sequence induces the commutative diagram
    $$\begin{tikzcd}
            0 \arrow[r] & \Br(X) \arrow[r]           & \Br(X|_U) \arrow[r]        & {\oplus H^1(k(f^{-1}(D_i)), \mathbb Q/\mathbb Z)}            \\
            0 \arrow[r] & \Br(B) \arrow[r] \arrow[u, "f^\ast"] & \Br(U) \arrow[u, "f|_U^\ast"] \arrow[r] & {\oplus H^1(k(D_i), \mathbb Q/ \mathbb Z)}. \arrow[u]
        \end{tikzcd}$$
    Assume that we have $\alpha \in \Br(X)$ and $\beta \in \Br(U)$ such that $\alpha|_{X|_U} = f^* \beta$.  We claim that this already implies $\alpha \in f^* \Br(B)$: Indeed, by the diagram above, it suffices to show that the maps
    $$H^1(k(D_i), \mathbb Q / \mathbb Z) \to H^1(k(f^{-1}(D_i)), \mathbb Q / \mathbb Z)$$
    are injective. This immediately follows from the fact that $k(D_i)$ is algebraically closed in $k(f^{-1}(D_i))$, due to the geometric integrality of the fibers of $f$.
\end{proof}

\section{Brauer classes on twisted compactified Jacobians}\label{sec:conclusion_hk}
In this section, we apply the results of the previous sections to twisted compactified Jacobians.
Let $(S, L)$ be a polarized K3 surface of genus $g$ such that all members of the linear system $|L|$ are integral. Fix a special Brauer class $\alpha \in \SBr(S, L)$.
By \cite{arinkin}, the Poincar\'e bundle over the smooth locus extends to an Arinkin sheaf on $\overline{\Pic}^0 \times \overline{\Pic}^0$ and, as $\Br(|L|) = 0$ and $\SBr(S, L)$ is both torsion and divisible, the construction given in Section \ref{sec:twistedpc}, see Lemma \ref{lem:grouphom}, yields a group homomorphism
$$o_{\alpha} \colon \SBr(S, L) \to \Br(\overline{\Pic}^0_{\alpha}).$$
Let $\mathcal C \subset S \times |L|$ denote the universal curve. As the projection $\mathcal C \to S$ is a $\mathbb P^{g-1}$-bundle, we obtain an isomorphism
$$\Br(S) \congpf \Br(\mathcal C).$$
Moreover, the relative Abel--Jacobi map $a \colon \mathcal C \dashrightarrow \overline{\Pic}^1$ induces a homomorphism $$a^* \colon \Br(\overline{\Pic}^1) \to \Br(\mathcal C).$$

\begin{proposition}
    The diagram
    $$
        \begin{tikzcd}
            \SBr(S, L) \arrow[r, "o_1"] \arrow[d, "\beta \mapsto \overline{\beta}"]&\Br(\overline\Pic^1) \arrow[d, "a^*"] \\
            \Br(S) \arrow[r, "\sim"] & \Br(\mathcal C) 
        \end{tikzcd}
    $$
    commutes.
\end{proposition}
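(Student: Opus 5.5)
The plan is to reduce the statement to the smooth locus, where Proposition \ref{lem:twist} applies, and then use Lemma \ref{lem:restriction_brauer} to pass back to the whole of $\mathcal C$. Let $U = |L|_{\mathrm{sm}} \subset |L|$ be the locus of smooth curves and write $\mathcal C_U \to U$ for the restricted universal curve. Over $U$, the compactified Jacobian $\overline{\Pic}^1$ restricts to $\Pic^1(\mathcal C_U/U)$. The relative Abel--Jacobi map is a regular morphism $\mathcal C_U \to \Pic^1(\mathcal C_U/U)$, and since $\mathcal C$ is smooth the pullback $a^*$ on Brauer groups is defined via the restriction to the complement of a codimension-$\geq 2$ locus (or via purity). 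By construction, the class $o_1(\beta)$ on $\overline{\Pic}^1$ restricts over $U$ to the class produced by Section \ref{sec:twistedpc} for the abelian scheme $\Pic^0(\mathcal C_U/U)$, using the restricted cocycle data. Proposition \ref{lem:twist}, applied to $\mathcal C_U \to U$ with $\beta|_{\mathcal C_U}$ in place of $\alpha$, then gives that $a^*o_1(\beta)|_{\mathcal C_U}$ and $\overline{\beta}|_{\mathcal C_U}$ have the same image in $H^1(U,\mathcal Pic_{\mathcal C_U/U})$. By the Leray exact sequence, they therefore agree modulo $\Br(U)$.

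The next step is to lift this congruence to $\mathcal C$. The fibers of $\mathcal C \to |L|$ are integral curves, hence geometrically integral, and both $\mathcal C$ and $|L|$ are smooth. Lemma \ref{lem:restriction_brauer} therefore shows that $\Br(\mathcal C)/\Br(|L|) \to \Br(\mathcal C_U)/\Br(U)$ is injective. Since $\Br(|L|)=\Br(\mathbb P^g)=0$, this yields the equality $a^*o_1(\beta) = \overline{\beta}$ in $\Br(\mathcal C)$, under the identification $\Br(S)\simeq \Br(\mathcal C)$ given by pullback along the $\mathbb P^{g-1}$-bundle $\mathcal C \to S$. This identification is compatible with restriction: the bottom arrow sends $\overline\beta$ to $p^*\overline\beta$, and $p^*\overline\beta$ restricted to $\mathcal C_U$ is the class used in Proposition \ref{lem:twist}.

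I expect the main obstacle to be making sure that the normalizations match. Lemma \ref{lem:grouphom} only defines $o_1$ modulo $\Br(B)$, and here that ambiguity vanishes because $\Br(|L|)=0$. One still has to check that the choices made in Section \ref{sec:identifying_brauer} can be taken to be the restrictions of global choices over $|L|$. These choices are the sections $c_i$, the modules $\mathcal Q_i$, and the $n$-th roots; for the smooth-curve case one takes the translation by $\mathcal O(-c_i)$ as the trivialization datum $s'$ of the $1$-torsion class $\Pic^1$. The $c_i$ can be chosen over an étale cover of all of $|L|$, landing in the smooth locus of the fibers, and the $\mathcal Q_i$ exist because $\overline{\Pic}^0_\beta$ is non-empty. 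Given these choices, the restriction to $U$ is literally the construction in Proposition \ref{lem:twist}.

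A further point is that $a$ is only rational on $\mathcal C$, but it is regular on the smooth locus of the fibers, whose complement has codimension at least $2$ in $\mathcal C$. Purity for the Brauer group of the smooth variety $\mathcal C$ therefore makes $a^*$ well defined, and it commutes with restriction to $\mathcal C_U$.
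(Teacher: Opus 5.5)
Your proposal is correct and is essentially the paper's proof: apply Proposition \ref{lem:twist} over the smooth locus $|L|_{\mathrm{sm}}$, then pass back to $\mathcal C$ using Lemma \ref{lem:restriction_brauer} together with $\Br(|L|)=0$. One harmless slip: $[\Pic^1]$ need not be trivial (it is only killed by $2g-2$), and translation by $\mathcal O(-c_i)$ gives the local trivializations of the torsor rather than the datum $s'$, but since only images in $H^1(U,\mathcal Pic_{\mathcal C_U/U})$ are compared, these choices play no role.
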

\begin{proof}
    The claim follows by combining Lemma \ref{lem:twist}, applied to the smooth locus of the relative curve, with Lemma \ref{lem:restriction_brauer} and the fact that $\Br(|L|) = 0$ is trivial.
\end{proof}

Since $\Br(|L|)$ is trivial and $\SBr(S, L)$ is torsion and divisible, Proposition \ref{prop:fullyfaithful} yields:
\begin{theorem}\label{thm:main_hk}
    Let $\alpha, \beta \in \SBr(S,L)$. Then, there is a twisted derived equivalence
    $$D^b(\overline{\Pic}^0_{\alpha}, o_{\alpha}(\beta)^{-1}) \simeq D^b(\overline\Pic_{\beta}^0, o_{\beta}(\alpha)).$$
\end{theorem}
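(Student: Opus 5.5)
The plan is to deduce Theorem \ref{thm:main_hk} directly from Theorem \ref{thm:main_abelian_schemes}, more precisely from Proposition \ref{prop:fullyfaithful}. We apply it to the good abelian fibration $X = Y = \overline{\Pic}^0 \to B = |L|$, equipped with Arinkin's sheaf $\cP$ on $\overline{\Pic}^0 \times \overline{\Pic}^0$. Arinkin's theorem applies because the universal curve $\mathcal C \to |L|$ is a flat family of integral curves on a smooth surface, so all singularities are planar, and $\mathcal C$ is smooth as a $\mathbb P^{g-1}$-bundle over $S$.

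\textbf{Step 1: identify the twists.} By the result of Huybrechts--Mattei recalled in Section \ref{sec:background_twistedcompact}, there is a map $\SBr(S,L) \to H^1(|L|, \overline{\mathcal Pic}^0)$ sending $\alpha$ to a class $s_\alpha$ with $(\overline{\Pic}^0)_{s_\alpha} \simeq \overline{\Pic}^0_\alpha$. We should check that this map is a group homomorphism. Over the smooth locus it is the map sending $\alpha$ to the torsor class described in Section \ref{sec:identifying_brauer}, via the cocycle $\cQ_i\otimes_\cA \cQ_j^\vee$, and this is visibly additive. Since $\SBr(S,L)\subset H^2(S,\bbQ/\bbZ)$ is torsion and divisible, so are $s_\alpha$ and $s_\beta$. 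Concretely, if $\alpha$ is $m$-torsion and $\beta = n\beta'$, we replace $m$ and $n$ by a common multiple $N$ and write $\beta = N\beta''$ using divisibility. Then $s_\alpha$ is $N$-torsion and $s_\beta$ is $N$-divisible, with $N$-th root $s_{\beta''}$. The hypotheses of Theorem \ref{thm:main_abelian_schemes} therefore hold with $s = s_\alpha$, $t = s_\beta$ and $n=N$.

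\textbf{Step 2: the Brauer classes.} Proposition \ref{prop:fullyfaithful} gives an equivalence
$$D^b(\overline{\Pic}^0_\alpha, o_{s_\alpha}(s_\beta)^{-1}) \simeq D^b(\overline{\Pic}^0_\beta, o'_{s_\beta}(s_\alpha)).$$
By Lemma \ref{lem:grouphom}, these classes are well defined modulo $\Br(|L|)=\Br(\mathbb P^g)=0$. Hence they are honest classes, and $o_\alpha(\beta)$ is by definition $o_{s_\alpha}(s_\beta)$. Because both $s_\alpha$ and $s_\beta$ are simultaneously torsion and divisible, the last assertion of Lemma \ref{lem:grouphom} gives $o'_{s_\beta}(s_\alpha) = o_{s_\beta}(s_\alpha) = o_\beta(\alpha)$. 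Substituting yields the claimed equivalence.

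\textbf{Main obstacle.} The delicate point is the well-definedness and independence of choices, including the choice of $N$ and of the root $\beta''$. Lemma \ref{lem:grouphom} handles this only modulo $\Br(B)$ and only through the Leray map $F^1\Br \to H^1(B,\mathcal Pic)$. We must therefore also know that $o_\alpha(\beta)$ lies in $F^1\Br(\overline{\Pic}^0_\alpha)$, which holds by construction via Proposition \ref{prop:gerbedesc}, and that the Leray sequence gives injectivity modulo $\Br(|L|)=0$. A further subtlety is that $\overline{\Pic}^0_\alpha$ has singular fibers, so the comparison should be carried out over $|L|_{\mathrm{sm}}$ and then transported back with Lemma \ref{lem:restriction_brauer}, using that the fibers are geometrically integral. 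If the changes of choice only alter the class by an element of $\Br(|L|)$, the statement follows.
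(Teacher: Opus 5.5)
Your proposal is correct and follows the paper's route. The paper obtains Theorem~\ref{thm:main_hk} directly from Proposition~\ref{prop:fullyfaithful} applied to $X=Y=\overline{\Pic}^0\to|L|$ with Arinkin's sheaf, using that $\SBr(S,L)$ is torsion and divisible and that $\Br(|L|)=0$ turns the classes of Lemma~\ref{lem:grouphom} (including the identification $o'=o$) into honest classes. The detour through $|L|_{\mathrm{sm}}$ and Lemma~\ref{lem:restriction_brauer} is not needed for this statement; the paper uses that lemma only to identify $o_\alpha(\beta)$ with moduli-theoretic classes. Additivity of $\alpha\mapsto s_\alpha$ is best checked with the cocycle $\cQ_i\otimes_\cA\cQ_j^\vee$ over all of $|L|$, since checking it only over the smooth locus would additionally require injectivity of restriction on $H^1$.
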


Recall if we denote the genus of the general member of $|L|$ by $g$, there is a birational map
\begin{equation}
S^{[g]} \overset{\sim} \dashrightarrow \mathcal C^{[g]} \coloneqq\Hilb^g(\mathcal C / |L|)  \overset{\sim}\dashrightarrow \overline{\Pic}^g,\label{eq:birmap}\end{equation}
since $g$ general points on $S$ are contained in a unique curve in the linear system $|L|$,
see, e.g., \cite[Sec.\ 3]{adm}. Let $\pi \colon S^{[g]} \to S^{(g)}$ denote the Hilbert--Chow morphism. Then, the homomorphism
\begin{align*}
Br(S) \to \Br(S^{(g)}) \to  \Br(S^{[g]}),\;
    \beta \mapsto \beta^{(g)} \mapsto \beta^{[g]} \coloneqq \pi^* \beta^{(g)}
\end{align*}
is an isomorphism, see \cite[Lem.\ 3.6]{huybrechts2025periodindexproblemhyperkahlermanifolds}.
\begin{proposition}\label{prop:hk_hilbert}
The isomorphism $\Br(\overline{\Pic}^g) \simeq \Br(S^{[g]})$ induced by (\ref{eq:birmap}) sends $o_{g}(\alpha)$ to $\overline{\alpha}^{[g]}$.
\end{proposition}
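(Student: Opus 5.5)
The strategy is to reduce the comparison to an open subset where everything is smooth. There I apply Proposition \ref{lem:hilbert} with $n=g$, and then spread the equality out using Lemma \ref{lem:restriction_brauer} together with $\Br(|L|)=0$. Let $U \coloneqq |L|_{\mathrm{sm}}$ be the locus of smooth curves and $\mathcal C_U \to U$ the restricted universal curve. Over $U$ the birational map (\ref{eq:birmap}) is a composition of honest morphisms
$$\Hilb^g(\mathcal C_U/U) = \mathcal C_U^{(g)} \xrightarrow{a^{(g)}} \Pic^g(\mathcal C_U/U).$$
The relative symmetric product is smooth and coincides with the relative Hilbert scheme, because the fibers are smooth curves. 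Moreover, $\mathcal C_U^{(g)} \to S^{(g)}$ is an open immersion on the locus of $g$ points lying on a smooth member, and there it lifts to $S^{[g]}$. Since Brauer groups of smooth varieties are birational invariants, the isomorphism $\Br(\overline{\Pic}^g) \simeq \Br(S^{[g]})$ is compatible with restriction to any common open subset. It therefore suffices to compare restrictions on a dense open set, where restriction maps are injective for smooth varieties.

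First step: restrict $o_g(\alpha)$ to $\Pic^g(\mathcal C_U/U)$. The construction of Section \ref{sec:twistedpc} is compatible with restriction over the base, so this restriction is the class $o_g(\alpha|_{\mathcal C_U})$ of Section \ref{sec:identifying_brauer} for the smooth family $\mathcal C_U \to U$. Proposition \ref{lem:hilbert} then gives
$$(a^{(g)})^* o_g(\alpha)|_U \equiv \overline{\alpha}|_{\mathcal C_U}^{(g)} \mod \Br(U).$$
Second step: identify $\overline{\alpha}|_{\mathcal C_U}^{(g)}$ with the restriction of $\overline{\alpha}^{[g]} = \pi^*\overline{\alpha}^{(g)}$ to the open subset $\mathcal C_U^{(g)} \subset S^{[g]}$. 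This holds because $\mathcal C_U^{(g)} \to S^{(g)}$ is induced by the projection $\mathcal C \to S$. Consequently $\prod_i \pr_i^*\overline{\alpha}$ pulls back to $\prod_i \pr_i^*(\overline{\alpha}|_{\mathcal C})$ on $\mathcal C_U^g$, and the descended classes agree by uniqueness of descent along the $\mathfrak S_g$-quotient.

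Third step: remove the ambiguity modulo $\Br(U)$. Transport $o_g(\alpha)$ to $S^{[g]}$ via the birational map, and consider the rational map $S^{[g]} \dashrightarrow |L|$, which is a morphism on the open locus $W$ of subschemes lying on a unique curve. Over $\overline{\Pic}^g \to |L|$ the fibers are integral, and Lemma \ref{lem:restriction_brauer} shows that
$$\Br(\overline{\Pic}^g)/\Br(|L|) = \Br(\overline{\Pic}^g) \to \Br(\Pic^g(\mathcal C_U/U))/\Br(U)$$
is injective. I would therefore work entirely on the $\overline{\Pic}^g$ side. The class $\overline{\alpha}^{[g]}$ transported to $\overline{\Pic}^g$ and the class $o_g(\alpha)$ both live in $\Br(\overline{\Pic}^g)$, and their restrictions over $U$ agree modulo $\Br(U)$ by the first two steps. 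Injectivity then forces the two global classes to be equal, using $\Br(|L|)=0$.

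The main obstacle is the bookkeeping of open subsets. One must check that the birational identification $\Br(\overline{\Pic}^g) \simeq \Br(S^{[g]})$ really restricts, over $U$, to pullback along $a^{(g)}$ composed with the inclusion $\mathcal C_U^{(g)} \subset S^{[g]}$. One must also check that $a^{(g)}$ for $n=g$ is birational, which holds because its generic fiber over $\Pic^g$ is $\mathbb P^0$ by Riemann--Roch. Finally, the application of Proposition \ref{lem:hilbert} requires the sufficient divisibility hypothesis on $\alpha$. This should be harmless, since $\SBr(S,L)$ is divisible.
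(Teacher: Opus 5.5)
Your proposal is correct and follows essentially the paper's route: restrict to $U=|L|_{\mathrm{sm}}$, apply Proposition \ref{lem:hilbert} with $n=g$, match $\overline{\alpha}|_{\mathcal C_U}^{(g)}$ with the pullback of $\overline{\alpha}^{[g]}$ (the paper's commutative diagram), and remove the $\Br(U)$-ambiguity with Lemma \ref{lem:restriction_brauer} and $\Br(|L|)=0$. There are two minor imprecisions, neither of which breaks the argument: first, $\mathcal C_U^{(g)}\to S^{[g]}$ is only a birational morphism, not an open immersion, since its fibre over $Z$ is the set of smooth members of $|L\otimes I_Z|$ and can be positive-dimensional, but this is harmless because all the Brauer groups involved inject compatibly into $\Br$ of the common function field; second, your second step really rests on functoriality of descending the $\mathfrak S_g$-linearized data, not on uniqueness of descent of bare Brauer classes, because pullback along $\mathcal C_U^g\to\mathcal C_U^{(g)}$ need not be injective on $\Br$.
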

\begin{proof}
Note that we have $F^1 \Br(\overline{\Pic}^g) = \Br(\overline{\Pic}^g)$ since $\overline{\Pic}^g$ is a Lagrangian-fibered hyperkähler manifold of K3$^{n}$-type, see \cite[Sec.\ 2.1]{huybrechts2025periodindexproblemhyperkahlermanifolds} and isomorphisms $\Br(S^{[g]}) \simeq \Br(\mathcal C^{[g]}) \simeq \Br(\overline{\Pic}^g)$ induced by the birational maps (\ref{eq:birmap}).

    By construction, the diagram
    $$\begin{tikzcd}
\Br(S) \arrow[r, "\sim"] \arrow[d, "(-)^{[g]}", "\simeq"'] & \Br(\mathcal C) \arrow[r] \arrow[d, "(-)^{[g]}"] & \Br(\mathcal C_{\mathrm{sm}}) \arrow[d, "(-)^{(g)}"] \\
{\Br(S^{[g]})} \arrow[r, "\sim"]             & {\Br(\mathcal C^{[g]})} \arrow[r]   & F^1Br(\mathcal C^{(g)}_{\mathrm{sm}}) 
\end{tikzcd}$$
commutes. The claim then follows by combining Lemma \ref{lem:hilbert}, applied to $\mathcal C_{\mathrm{sm}} / |L|_{\mathrm{sm}}$, with Lemma \ref{lem:restriction_brauer} and the vanishing of $\Br(|L|) = 0$.
\end{proof}

As $\overline{\Pic}^0_{\alpha}$ is usually not a fine moduli space of twisted sheaves on $S$, there is an Brauer class on $\overline{\Pic}^0_\alpha$ obstructing the existence of an universal object. Generalizing \cite[Prop.\ 5.5]{matteimeinsma2025}, we relate the obstruction class to $o_{\alpha}([\Pic^1])$:
\begin{proposition}\label{prop:hk_obstruction}
    The image of $[\Pic^1] \in \SBr(S, L)$ in $\Br(\overline{\Pic}^0_{\alpha})$ agrees, up to sign, with the obstruction to the existence of a universal object on $S \times \overline{\Pic}^0_{\alpha}$.
\end{proposition}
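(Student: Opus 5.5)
The plan is to mirror the proof of the preceding Proposition \ref{prop:hk_hilbert}: reduce to the smooth locus $|L|_{\mathrm{sm}}$, where Proposition \ref{lem:obstruction} applies, and then extend back over all of $|L|$ using Lemma \ref{lem:restriction_brauer} together with $\Br(|L|)=0$. Here $[\Pic^1]$ denotes a special Brauer class whose associated twist is $\overline{\Pic}^1$, so its image in $\Br(\overline{\Pic}^0_\alpha)$ is $o_\alpha([\Pic^1])$.

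First, I fix notation and compare the two classes globally. Let $\theta\in\Br(\overline{\Pic}^0_\alpha)$ be the obstruction class to a universal $\alpha$-twisted sheaf on $S\times\overline{\Pic}^0_\alpha$. Equivalently, via the $\mathbb P^{g-1}$-bundle $\mathcal C\to S$, it is the obstruction on $\mathcal C\times_{|L|}\overline{\Pic}^0_\alpha$. Both $\theta$ and $o_\alpha([\Pic^1])$ lie in $F^1\Br(\overline{\Pic}^0_\alpha)=\Br(\overline{\Pic}^0_\alpha)$. For $o_\alpha$ this holds by construction, and for $\theta$ it holds because over étale opens of $|L|$ carrying sections of $\mathcal C$ a universal object exists. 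The same reasoning as in Proposition \ref{prop:hk_hilbert} also gives this equality directly for a Lagrangian-fibered hyperkähler manifold of K3$^{[n]}$-type. Because $\Br(|L|)=0$, it suffices to compare their images in $\Br(\overline{\Pic}^0_\alpha)/\Br(|L|)$. By Lemma \ref{lem:restriction_brauer}, applied to $\overline{\Pic}^0_\alpha\to|L|$ (which has geometrically integral fibers, since compactified Jacobians of integral planar curves are irreducible), these images embed into $\Br(\Pic^0_\alpha|_{|L|_{\mathrm{sm}}})/\Br(|L|_{\mathrm{sm}})$. That group in turn injects into $H^1(|L|_{\mathrm{sm}},\mathcal Pic_{\Pic^0_\alpha})$ through the Leray sequence.

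Second, I restrict to $|L|_{\mathrm{sm}}$. There the obstruction class $\theta$ restricts to the obstruction class for the family of smooth curves $\mathcal C_{\mathrm{sm}}\to|L|_{\mathrm{sm}}$ with special Brauer class $\alpha|_{\mathcal C_{\mathrm{sm}}}$. Meanwhile $o_\alpha([\Pic^1])$ restricts to the class $o_\alpha(1)$ of Section \ref{sec:identifying_brauer}, built from the torsor $\Pic^1$ together with the Arinkin sheaf, which over the smooth locus is the Poincaré bundle. This compatibility holds because the construction of Section \ref{sec:twistedpc} is compatible with base change to an open subset of the base, with the same cocycle data $s_{ij}$, $s_i'$, $t'_{ij}$ restricted. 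Proposition \ref{lem:obstruction} then shows that the two restrictions agree up to sign in $H^1(|L|_{\mathrm{sm}},\mathcal Pic_{\Pic^0_\alpha})$. By the injectivity established in the first step, they agree up to sign globally.

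The main obstacle is the bookkeeping in the identification of $[\Pic^1]\in\SBr(S,L)$ with the torsor class used to define $o_\alpha(1)$. Concretely, the sections $c_i$ of $\mathcal C$ must be chosen so that $\mathcal O(c_i-c_j)$ represents the cocycle of $\Pic^1$, and this class has to be $n$-divisible/torsion compatibly. I would address this through the surjection $\SBr(S,L)\to H^1(|L|,\overline{\mathcal Pic}^0)$ and the choice-independence in Lemma \ref{lem:grouphom}, which make the ambiguity only a class in $\Br(|L|)=0$. A secondary point is checking that the sign convention in Proposition \ref{lem:obstruction} is insensitive to these choices, which is why the statement is only claimed up to sign.
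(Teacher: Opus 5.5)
Your proposal is correct and follows the paper's own argument. You identify the obstruction on $S\times\overline{\Pic}^0_\alpha$ with the one on $\mathcal C\times_{|L|}\overline{\Pic}^0_\alpha$, apply Proposition~\ref{lem:obstruction} to $\mathcal C_{\mathrm{sm}}\to|L|_{\mathrm{sm}}$, and extend over $|L|$ using Lemma~\ref{lem:restriction_brauer} and $\Br(|L|)=0$; your explicit check that both classes lie in $F^1\Br$, so that comparing them in $H^1(|L|_{\mathrm{sm}},\mathcal Pic)$ detects them modulo $\Br(|L|_{\mathrm{sm}})$, is a detail the paper leaves implicit.
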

\begin{proof}
    Note that the obstruction class for the existence of a universal object on $S \times \overline{\Pic}^0_{\alpha}$ agrees with the obstruction to the existence of a universal object on $\mathcal C \times \overline{\Pic}^0_{\alpha}.$ Then, as $\Br(|L|) = 0$, the claim follows by combining Lemma \ref{lem:obstruction}, applied to the smooth relative curve $\mathcal C_{\mathrm{sm}} \to |L|_{\mathrm{sm}}$, with Lemma \ref{lem:restriction_brauer} and the fact that $\Br(|L|) = 0$.
\end{proof}

\begin{corollary}\label{cor:twistedpicequiv}
    Let $\alpha \in \SBr(S, L)$. Then, there is a derived equivalence
    $$D^b(\overline{\Pic}^0_{\alpha}, \theta^{-g}) \simeq (D^b(S, \overline{\alpha}))^{[g]},$$
    where $\theta \in \Br(\overline{\Pic}^0_{\alpha})$ is the obstruction for $\overline{\Pic}^0_{\alpha}$ to be a fine moduli space of twisted sheaves on $S$.
\end{corollary}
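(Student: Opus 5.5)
We apply Theorem \ref{thm:main_hk} with the pair $(\alpha,\beta)$ replaced by $(\alpha,[\Pic^g])$, or more precisely by $(\alpha, \gamma)$, where $\gamma\in\SBr(S,L)$ is a class with $\overline{\Pic}^0_{\gamma}\simeq\overline{\Pic}^g$ as torsors over $|L|$. Since $\Pic^g$ corresponds to $g\cdot[\Pic^1]$ in $H^1(|L|,\overline{\mathcal Pic}^0)$ and the map from $\SBr(S,L)$ is a homomorphism, we may take $\gamma = g\,\delta$, where $\delta$ lifts $[\Pic^1]$. This gives an equivalence
$$D^b(\overline{\Pic}^0_{\alpha}, o_{\alpha}(g\delta)^{-1}) \simeq D^b(\overline{\Pic}^g, o_{g}(\alpha)).$$

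We identify the two sides as follows.

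\textbf{Left side.} By Lemma \ref{lem:grouphom} and $\Br(|L|)=0$, $o_\alpha$ is a group homomorphism into $\Br(\overline{\Pic}^0_\alpha)$, so $o_\alpha(g\delta)=o_\alpha(\delta)^g$. Proposition \ref{prop:hk_obstruction} identifies $o_\alpha(\delta)=o_\alpha([\Pic^1])$ with $\theta^{\pm1}$. The left side is therefore $D^b(\overline{\Pic}^0_\alpha,\theta^{\mp g})$. Choosing the sign convention of the obstruction class appropriately, or replacing $\theta$ by its inverse, which is equally "the" obstruction class up to the convention used in the statement, yields $\theta^{-g}$.

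\textbf{Right side.} Proposition \ref{prop:hk_hilbert} shows that the isomorphism $\Br(\overline{\Pic}^g)\simeq\Br(S^{[g]})$ induced by the birational map (\ref{eq:birmap}) sends $o_g(\alpha)$ to $\overline{\alpha}^{[g]}$. The birational map between $K$-trivial hyperkähler manifolds $\overline{\Pic}^g \dashrightarrow S^{[g]}$ should induce a twisted derived equivalence $D^b(\overline{\Pic}^g,o_g(\alpha))\simeq D^b(S^{[g]},\overline{\alpha}^{[g]})$. For this we would invoke the known result that birational hyperkähler manifolds of K3$^{[n]}$-type are derived equivalent, in its twisted form, compatibly with the identification of Brauer groups. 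Concretely, such a map factors through Mukai flops or arises via the deformation argument of Maulik--Shen--Yin--Zhang / Halpern-Leistner, and the kernels twist since they are supported over the graph closure.

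Finally, we use the twisted version of the BKR--Haiman/Krug equivalence $D^b(S^{[g]},\overline\alpha^{[g]})\simeq D^b_{\mathfrak S_g}(S^g,\overline\alpha^{\boxtimes g})\simeq (D^b(S,\overline\alpha))^{[g]}$, which is the definition or standard description of the symmetric power of the category. Composing the three equivalences gives the claim.

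The main obstacle is the right-hand step: making the birational equivalence genuinely twisted, with the Brauer classes matching under (\ref{eq:birmap}). The kernel must carry the correct twist, and the sign bookkeeping for $\theta$ must be handled consistently. I would try first to prove it by deforming the pair $(\overline{\Pic}^g, o_g(\alpha))$ together with a twisted kernel, or by working with explicit flops, where the twisted structure is pulled back from both sides.
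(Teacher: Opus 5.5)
Your proposal is correct and follows the paper's proof essentially step by step. The paper applies Theorem~\ref{thm:main_hk} to $\alpha$ and the class of $\overline{\Pic}^g$ (which is $g$ times that of $\Pic^1$), and uses the homomorphism property of $o_\alpha$ together with Proposition~\ref{prop:hk_obstruction} to get $o_\alpha(g)=g\,o_\alpha(1)=g\theta$, without being any more careful than you about the sign. It then combines Proposition~\ref{prop:hk_hilbert} with a twisted birational D-equivalence, and finishes with twisted BKR via \cite[Prop.\ 1.2]{huybrechtsbottini}.

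The step you flag as the main obstacle is not proved by hand in the paper. It simply cites the twisted D-equivalence theorem \cite[Thm.\ 0.3]{dequivalence}, which already matches the Brauer classes under the birational identification. So you need neither a new deformation argument nor a factorization into Mukai flops; the latter is not known in general for such birational maps anyway.
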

\begin{proof}
    By Theorem \ref{thm:main_hk}, there is a twisted derived equivalence
    $$D^b(\overline{\Pic}^0_{\alpha}, -o_\alpha(g)) \simeq D^b(\overline{\Pic}^g, o_g(\alpha)).$$
    Note that there is a natural birational map $\Pic^g \overset{\sim}\dashrightarrow S^{[g]}$.
    By Proposition \ref{prop:hk_hilbert} and \cite[Thm.\ 0.3]{dequivalence}, there is a twisted derived equivalence
    $$D^b(\overline{\Pic}^g, o_g(\alpha)) \simeq D^b(S^{[g]}, \overline{\alpha}^{[g]}).$$
    By twisted BKR, cf. \cite[Prop.\ 1.2]{huybrechtsbottini}, we have an equivalence
    $$D^b(S^{[g]}, \overline{\alpha}^{[g]}) \simeq (D^b(S, \overline{\alpha}))^{[g]}.$$
    The claim then follows by composing the above equivalences and noting that we have $o_{\alpha}(g) = g o_{\alpha}(1) = g \theta$, by Proposition \ref{prop:hk_obstruction}.
\end{proof}

\begin{remark}
    Since the right hand-side of the equivalence in Corollary \ref{cor:twistedpicequiv} only depends on the image of $\alpha$ in $\Br(S)$, we obtain an equivalence
    $$D^b(\overline{\Pic}^0_{\alpha}, \theta^{-g}) \simeq D^b(\overline{\Pic}^0_{\alpha'}, \theta'^{-g}),$$
    for all special Brauer classes $\alpha, \alpha' \in \SBr(S, L)$ that map to the same Brauer class in $\Br(S)$. Since the Markman-Mukai lattice of $\overline{\Pic}^0_{\alpha}$ just depends on the image of $\alpha$ in $\Br(S)$, cf. Sec.\ \ref{sec:markmanmukai}, this can be interpreted as a special case of a derived Torelli for hyperkähler manifolds of K3$^{[n]}$-type discussed in \cite{zhang}.
\end{remark}

By the results of Huybrechts and Mattei, any (non-special) Lagrangian-fibered hyperkähler manifold of K3$^{[n]}$-type is birational to a twisted Picard variety  \cite[Thm.\ 1.2]{huybrechtsmattei}. Corollary \ref{cor:twistedpicequiv} may thus be rephrased as follows:
\begin{corollary}\label{cor:generallangrangianfibered}
   Let $f \colon X \to \mathbb P^n$ be a Lagrangian fibered hyperkähler manifold of K3$^{[n]}$-type and assume that $X$ has Picard rank $2$ and $f^* \mathcal O_{\mathbb P^n}(1)$ is indivisible in $H^2(X, \mathbb Z)$. Then, there is a twisted K3 surface $(S, \alpha)$, a Brauer class $\theta \in \Br(X)$ and a linear exact equivalence
    $$D^b(X, \theta^n) \simeq (D^b(S, \alpha))^{[n]}.$$
\end{corollary}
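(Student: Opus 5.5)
The plan is to reduce to Corollary \ref{cor:twistedpicequiv} via the Huybrechts--Mattei description of Lagrangian fibrations, and then transport the equivalence along the birational map using derived invariance for birational hyperkähler manifolds.

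\textbf{Step 1: realize $X$ as a twisted Picard variety.} By \cite[Thm.\ 1.2]{huybrechtsmattei} (see also \cite{markmanlagrangian}), there exist a polarized K3 surface $(S,L)$ of genus $g=n$ with universal curve $\mathcal C\to|L|\simeq\mathbb P^n$, and a special Brauer class $\alpha\in\SBr(S,L)$, such that $X$ is birational over $\mathbb P^n$ to $\Pic^0_\alpha(\mathcal C/|L|_{\mathrm{sm}})$, hence to $\overline{\Pic}^0_\alpha$.

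\textbf{Step 2: integrality of $|L|$.} Corollary \ref{cor:twistedpicequiv} needs every member of $|L|$ to be integral. Here we use the hypotheses. Indivisibility of $f^*\mathcal O(1)$ identifies, via the Markman lattice description, the relevant class with a primitive $L$. Picard rank $2$ for $X$ then forces $\NS(S)=\mathbb Z L$. With Picard rank one and $L$ primitive, every curve in $|L|$ is integral.

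I expect this step to be the main obstacle: the hypotheses on $X$ must be translated precisely into $\rho(S)=1$ and primitivity of $L$. I would try the Markman--Mukai lattice and the description in \cite[Lem.\ 2.5]{markmanlagrangian} first.

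\textbf{Step 3: apply the corollary.} Corollary \ref{cor:twistedpicequiv} gives
$$D^b(\overline{\Pic}^0_\alpha,\theta_0^{-n})\simeq (D^b(S,\overline\alpha))^{[n]},$$
where $\theta_0$ is the obstruction class to the existence of a universal object.

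\textbf{Step 4: transport to $X$.} Since $X$ and $\overline{\Pic}^0_\alpha$ are birational hyperkähler manifolds, they are related by Mukai flops, as in the situation of \cite{dequivalence}. The birational map induces an isomorphism $\Br(\overline{\Pic}^0_\alpha)\simeq\Br(X)$, because Brauer groups are birational invariants of smooth projective varieties. Let $\theta\in\Br(X)$ be the image of $\theta_0^{-1}$. The twisted version of \cite[Thm.\ 0.3]{dequivalence}, used already in the proof of Corollary \ref{cor:twistedpicequiv}, then yields
$$D^b(X,\theta^n)\simeq D^b(\overline{\Pic}^0_\alpha,\theta_0^{-n}).$$
Composing this with Step 3 gives the claim, with the twisted K3 surface $(S,\overline\alpha)$.
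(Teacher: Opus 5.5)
Your proposal is correct and follows the paper's proof essentially step for step. The paper likewise uses \cite[Thm.\ 1.2]{huybrechtsmattei}, then deduces $\rho(S)=1$ from $\rho(X)=2$ and the primitivity of $L$ from \cite[Thm.\ 1.5]{markmanlagrangian}, which gives integrality of all curves in $|L|$, and finally combines Corollary \ref{cor:twistedpicequiv} with the twisted D-equivalence \cite[Thm.\ 0.3]{dequivalence}. One small caveat: your aside that birational hyperkähler manifolds are related by Mukai flops is not true in general, but the argument does not need it, since \cite[Thm.\ 0.3]{dequivalence} applies directly to birational K3$^{[n]}$-type manifolds with corresponding Brauer classes.
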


\begin{proof}
By \cite[Thm.\ 1.2]{huybrechtsmattei}, there is a K3 surface $S$ and a complete linear system $\mathcal C \to |L|$ on $S$ and $\alpha \in \SBr(S, L)$ such that $X$ is birational to $\Pic^0_{\alpha}(\mathcal C / |L|_{\mathrm{sm}})$. From $\rho(X) = 2$, one deduces $\rho(S) = 1$ and the assumption on the divisibility of $f^* \mathcal O_{\mathbb P^n}(1)$ ensures that $L$ is primitive by \cite[Thm.\ 1.5]{markmanlagrangian}. In particular, all members of the linear system $|L|$ are integral. The claim thus follows by combining Corollary \ref{cor:twistedpicequiv} with \cite[Thm.\ 0.3]{dequivalence}.
\end{proof}

\begin{remark}\label{rem:genericity}
Since the existence of the polarized K3 surface $(S, L)$ is an open condition by \cite[Thm.\ 1.2]{huybrechtsmattei} and \cite{markmanlagrangian} and the condition that all members of $|L|$ is integral is an open condition on the moduli space of polarized K3 surfaces, the condition on the Picard rank of $X$ can be weakened to a genericity assumption. In other words, the conclusion of Corollary \ref{cor:generallangrangianfibered} holds on a dense open subset of the moduli space of hyperkähler manifolds of K3$^{[n]}$-type with a Lagrangian fibration for which the pullback of $\mathcal O_{\mathbb P^n}(1)$ has divisibility one.
\end{remark}

\section{Markman--Mukai lattices and derived equivalences of K3 categories}\label{sec:markmanmukai}
Let $M$ be a hyperkähler manifold of K3$^{[n]}$-type. By the work of Markman, there is a natural extension of lattices and weight 2 Hodge structures $H^2(M,\bbZ) \subset L(M)$ which has been called the Markman--Mukai lattice \cite[Cor.\ 9.5]{markmanmukailattices}. 

\begin{theorem}[{\cite[Sec.\ 9.1]{markmanmukailattices}}]
    \label{thm:mmlattice}
    For any hyperkähler manifold $M$ of K3$^{[n]}$-type, the Markman--Mukai lattice $L(M)$ satisfies the following properties. 
    \begin{enumerate}[label={\rm{(\alph*)}}]
        \item Abstractly as a lattice, $L(M)$ coincides with the Mukai lattice of a K3 surface.
        \item If $M = M_H(v)$ is a moduli space of stable sheaves on K3 surface $S$ there is a Hodge isometry $L(M) \simeq \widetilde{H}(S,\bbZ)$ which preserves the natural embeddings of $H^2(M,\bbZ)$.
        \item If $n = \frac{1}{2}\dim M \geq 4$, then $L(M)$ is naturally Hodge-isometric to the quotient Hodge structure $Q^4(M,\bbZ) \coloneqq H^4(M,\bbZ)/\mathrm{Sym}^2H^2(M,\bbZ)$ endowed with the unique even unimodular monodromy-invariant bilinear form, and the embedding of $H^2(M) \hookrightarrow L(M)$ identifies $H^2(M)$ with $c_2(M)^\perp$.
        \item If $M$ and $N$ are birational hyperkähler manifolds of K3$^{[n]}$-type, then there is a Hodge isometry $L(M) \simeq L(N)$ which restricts under the natural embeddings to the Hodge isometry on the sublattices $H^2(M,\bbZ) \simeq H^2(N,\bbZ)$ induced by the birational map between $M$ and $N$.
    \end{enumerate}
\end{theorem}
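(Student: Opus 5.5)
Since this theorem is quoted from \cite[Sec.\ 9.1]{markmanmukailattices}, the honest proof is a citation. If I had to reconstruct it, I would build $L(M)$ by deformation from moduli spaces of sheaves and transport the structure along parallel transport operators. The plan has four steps.

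\textbf{Step 1: the model case.} Let $M = M_H(v)$ be a moduli space of stable sheaves on a K3 surface $S$, with $v$ primitive and $v^2 = 2n-2$. Mukai's homomorphism $\theta_v \colon v^\perp \to H^2(M, \bbZ)$, built from the (quasi-)universal family, is a Hodge isometry. I would set $L(M) \coloneqq \widetilde H(S, \bbZ)$, with $H^2(M,\bbZ) \hookrightarrow L(M)$ given by $\theta_v^{-1}$. The Hodge structure on $L(M)$ is determined by that on $H^2(M)$ and the fact that $v$ is of type $(1,1)$. This gives (a) and (b) in this case.

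\textbf{Step 2: arbitrary $M$.} Choose a parallel transport operator $P \colon H^2(M) \to H^2(M_H(v))$ and pull back the embedding $v^\perp \subset \widetilde H$. The Hodge structure on $L(M)$ is the one for which $v$ is of type $(1,1)$ and $H^{2,0}(M)$ spans the $(2,0)$-part.

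The main obstacle is independence of the choice of $P$. I would prove that every monodromy operator $g \in \mathrm{Mon}^2(M_H(v))$ extends to an isometry $\widetilde g$ of $\widetilde H(S,\bbZ)$ fixing the line $\bbZ v$. By Markman's description, $\mathrm{Mon}^2$ is the subgroup of $O^+(H^2)$ acting by $\pm 1$ on the discriminant group $\bbZ/(2n-2)$.
\begin{enumerate}
\item Nikulin's gluing criterion says an isometry of $v^\perp$ extends to $\widetilde H$ exactly when its action on $A_{v^\perp} \cong A_{\bbZ v}$ matches an action on $\bbZ v$. The action $\pm 1$ matches $v \mapsto \pm v$.
\item The sign is governed by the orientation character. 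This lets one normalize the extension so that the embedding is canonical up to this controlled ambiguity.
\end{enumerate}
Without this step, $L(M)$ is defined only up to an unknown automorphism.

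\textbf{Step 3: property (c).} For $n \geq 4$, I would use Markman's generators of $H^*(M_H(v))$, namely the Künneth components of the Chern classes of the universal sheaf. The degree-$4$ classes $c_2$ of $\theta$-type, indexed by $\widetilde H(S)$, map isomorphically onto $Q^4(M)$ modulo $\mathrm{Sym}^2 H^2$; the bound $n \geq 4$ ensures there are no extra relations in degree $4$. Monodromy equivariance, together with uniqueness of the invariant even unimodular form, then identifies the lattices, with $c_2(M)$ corresponding to $v$ so that $H^2 = c_2^\perp$. Since this description is deformation invariant, it extends to all $M$.

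\textbf{Step 4: property (d).} By Huybrechts, a birational map $M \dashrightarrow N$ induces a parallel transport operator on $H^2$ that is a Hodge isometry. Step 2 then transports $L(M)$ to $L(N)$ compatibly with the embeddings.
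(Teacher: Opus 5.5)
The paper gives no argument for this theorem; it is quoted from Markman. Your opening sentence therefore matches the paper's treatment exactly, and as a proof by citation there is nothing to add.

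Read as an actual proof, though, your reconstruction has a real hole, and it sits where the paper later leans on the theorem. Step 2 shows only that, for one fixed reference model $M_H(v)$, the embedding $\theta_v^{-1}\circ P$ is independent of the parallel transport $P$ up to $O(\widetilde H(S,\mathbb Z))$. You never compare two different sheaf models. Suppose $M=M_{H'}(v')$ lives on another K3 surface $S'$. Then Step 1 gives Mukai's embedding $\theta_{v'}^{-1}\colon H^2(M,\mathbb Z)\hookrightarrow\widetilde H(S',\mathbb Z)$, and Step 2 gives $\theta_v^{-1}\circ P$. Property (b) for $M$ requires these two to lie in the same orbit under isometries of the abstract Mukai lattice $\widetilde\Lambda$. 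So does the naturality in (c) and (d), and so does the paper's use of (b) and (d) in Corollary~\ref{cor:birimpliesequiv} to compare Mukai lattices of different K3 categories.

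This does not follow from Nikulin's criterion plus knowledge of $\mathrm{Mon}^2$. Precomposition with $O(H^2)$ permutes the $O(\widetilde\Lambda)$-orbits of primitive embeddings of the K3$^{[n]}$-lattice into $\widetilde\Lambda$ transitively, with stabilizer the subgroup acting by $\pm1$ on the discriminant. Hence there are $|O(A_{H^2})/\{\pm1\}|$ orbits, and this number exceeds $1$ as soon as $n-1$ has two distinct prime factors. Knowing only that $P$ is a parallel transport operator does not tell you which orbit $\theta_v^{-1}\circ P$ lands in relative to $\theta_{v'}^{-1}$.

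The missing input is geometric. It is Markman's theorem that for an orientation-preserving isometry $g\colon\widetilde H(S,\mathbb Z)\to\widetilde H(S',\mathbb Z)$ with $g(v)=v'$, the operator $\theta_{v'}\circ g\circ\theta_v^{-1}$ is a parallel transport operator, and on all of $H^*$ it is induced by the universal class. It is proved by connecting $(S,v)$ and $(S',v')$ through deformations of the K3 surface and Fourier--Mukai (Yoshioka-type) equivalences. Each of these carries universal families to universal families and hence visibly commutes with Mukai's homomorphism. Once one compatible parallel transport exists, your Step 2 spreads the compatibility to all of them, and (b) follows for every moduli space of sheaves.

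The same theorem is also what justifies the ``monodromy equivariance'' you invoke in Step 3. Deformation invariance of $Q^4$ as a local system does not by itself show that the universal-class map $\widetilde H(S,\mathbb Z)\to Q^4(M_H(v))$ intertwines each monodromy operator $g$ with its extension $\widetilde g$. That intertwining is exactly what you need to transport the $c_2$-description from the reference model to $L(M)$ for arbitrary $M$.
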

\begin{remark}
    Note that the embedding $H^2(M,\bbZ) \subset L(M)$ is only unique up to a lattice self-isometry of $L(M)$. Once such an embedding is fixed, the Hodge structure on $L(M)$ is determined by the one on $H^2(M,\bbZ)$ since $H^2(M,\bbZ)^\perp$ must be algebraic. 
\end{remark}
It is possible to generalize the moduli-theoretic interpretation of $L(M)$ to either the case where $M$ is a moduli space of Bridgeland stable objects on a (possibly twisted) K3 surface $S$ or in a Kuznetsov component $\Ku(X)$ of some cubic fourfold, at least for stability conditions in the distinguished component $\Stab^\dagger(S)$ (resp. $\Stab^\dagger(\Ku(X))$). The following result seems to be well-known to experts and has been suggested before in \cite[Cor.\ 1.3]{bayermacri} and in the comments following \cite[Thm.\ 29.2]{blmnps} but for the sake of completeness we make explicit the details of the deformation argument here.

\begin{proposition}\label{prop:markmanlatticegeneral}
    Suppose $M$ is one of the following cases: 
    \begin{enumerate}
        \item $M = M_\sigma(v)$ is a positive-dimensional moduli space of $\sigma$-stable objects on a twisted K3 surface $(S,\alpha)$, where $\sigma \in \Stab^\dagger(S,\alpha)$ is $v$-generic, or
        \item $M = M_\sigma(v)$ is a positive-dimensional moduli space of $\sigma$-stable objects on a Kuznetsov component $\cA_X$ for a cubic fourfold $X$, where $\sigma \in \Stab^\dagger(\cA_X)$ is $v$-generic.        
    \end{enumerate}
    In each case, assume that the Mukai vector $v$ is primitive and that $\sigma \in \Stab^\dagger$ lies in the distinguished component of the stability manifold. \par 
    Let $\cC$ denote the underlying K3 category in each of the cases (so $\cC$ is either $\Db(S,\alpha)$ or $\cA_X$). Then there is a Hodge isometry $L(M) \simeq \widetilde{H}(\cC,\bbZ) \coloneqq K^{\mathrm{top}}_0(\cC)$ which is compatible with the natural embeddings of $H^2(M,\bbZ).$
\end{proposition}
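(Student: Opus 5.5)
The plan is to deform any such $M$ to a moduli space of Gieseker-stable sheaves on an untwisted K3 surface, where Theorem \ref{thm:mmlattice}(b) gives the isometry. We then transport that isometry back along the deformation using the monodromy-equivariance of $L(M)$.

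\emph{Comparison map.} Any moduli space $M = M_\sigma(v)$ on $\cC$ carries a quasi-universal family; in the Kuznetsov case it lies in $\cA_X \subset \Db(X)$, in the twisted case it is twisted. This family induces the Mukai homomorphism $\theta_v \colon v^\perp \to H^2(M,\bbZ)$, and $\theta_v$ is a Hodge isometry by \cite{bayermacri} and, in the Kuznetsov case, \cite{blmnps}. We therefore have a Hodge-isometric embedding $H^2(M,\bbZ) \simeq v^\perp \subset \widetilde{H}(\cC,\bbZ)$. The claim is that this embedding agrees, up to the ambiguity fixed by $H^2$, with Markman's embedding $H^2(M,\bbZ)\subset L(M)$. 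Both lattices are abstractly the Mukai lattice. Extending $\theta_v$ to an isometry $\widetilde{H}(\cC,\bbZ) \to L(M)$ compatible with the embeddings is equivalent to identifying the orientation/extension class of $v^\perp\subset \widetilde H$ with Markman's. Once the lattice isometry exists and restricts to $\theta_v$, it is automatically a Hodge isometry: by the Remark after Theorem \ref{thm:mmlattice}, the Hodge structure on either side is determined by that on $H^2$, since $H^2(M)^\perp$ is algebraic.

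\emph{Deformation.} We realize the pair $(\cC, \sigma)$ in a family over a connected base $T$, together with a relative moduli space $\mathcal M \to T$ that is smooth and proper. Two inputs make this possible:
\begin{enumerate}
\item In case (ii), the families of stability conditions on Kuznetsov components of \cite{blmnps} degenerate $\cA_X$ to $\Db(S,\alpha)$ for a cubic with an associated (twisted) K3. Density of such cubics in the relevant Noether--Lefschetz loci lets us keep $v$ algebraic along the family.
\item In case (i), we deform $(S,\alpha,v)$ to an untwisted K3 with $v$ of positive rank, and move $\sigma$ within $\Stab^\dagger$, using wall-crossing in \cite{bayermacri}, into the large volume limit. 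There $M_\sigma(v)$ is a Gieseker moduli space $M_H(v)$; crossing walls only changes $M$ birationally, and Theorem \ref{thm:mmlattice}(d) handles that step.
\end{enumerate}
Along $T$, the local systems $R^2$ of the relative moduli space and $\widetilde{H}(\cC_t,\bbZ)$ form local systems, and the Mukai maps $\theta_{v,t}$ glue to a morphism of local systems. Markman's $L(\mathcal M_t)$ also forms a local system containing $R^2$, since his construction is monodromy-invariant.

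\emph{Transport.} At the special point, Theorem \ref{thm:mmlattice}(b) provides the compatible isometry. Parallel transport carries it to every fiber, and it remains compatible with the embeddings there because both embeddings vary as local systems.

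The main obstacle is the deformation step in case (ii): producing a connected family with a relative stability condition that stays $v$-generic and lies in $\Stab^\dagger$, so that $\mathcal M\to T$ is smooth and proper. If $v$-genericity fails at a point, we plan to perturb $\sigma$ within its chamber and again invoke Theorem \ref{thm:mmlattice}(d) for the resulting birational modification. A secondary subtlety is checking that Markman's $L$ is really a local system compatible with $H^2$ over the base; this follows from its monodromy-invariance in \cite{markmanmukailattices}.
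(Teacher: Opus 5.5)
Your proposal is correct in outline and shares the paper's outer structure: deform to a moduli space on an untwisted K3 surface (via \cite{blmnps} in the Kuznetsov case), apply Theorem~\ref{thm:mmlattice}(b) there, and transport back. The transport mechanism, however, is genuinely different.

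\emph{The paper's route.} It builds a map of local systems $K^{\mathrm{top}}_0((\mathcal S,-\alpha)/T)\to R^4\pi_*\bbZ/\mathrm{Sym}^2R^2\pi_*\bbZ$ from $c_2$ of the universal twisted family. Its target $Q^4(M)$, from Theorem~\ref{thm:mmlattice}(c), is automatically a local system, and the map is fiberwise Hodge because it comes from an algebraic correspondence. At the untwisted end the paper uses Markman's identification of this $c_2$-map. The cost is that (c) needs $n\ge 4$, so $n\le 3$ must be handled separately by uniqueness of the embedding. Also, compatibility with the $H^2$-embeddings is only implicit, through $H^2(M)=c_2(M)^\perp$.

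\emph{Your route.} You transport the Mukai homomorphism $\theta_v\colon v^\perp\to H^2(M,\bbZ)$, which is a morphism of local systems since it is $c_1$ of the rank-zero Fourier--Mukai image. You combine it with Markman's theorem that his canonical $O(\Lambda)$-orbit of embeddings $H^2(M,\bbZ)\hookrightarrow\Lambda$ is preserved by parallel transport operators and, for untwisted Gieseker moduli spaces, is the orbit of $\theta_v^{-1}$. This argument is uniform in $n$, and compatibility of the embeddings is its literal content. The Hodge property then follows from the Remark after Theorem~\ref{thm:mmlattice}, since $\bbZ v$ is algebraic.

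Several inputs should be made explicit.
\begin{enumerate}
\item Parallel-transport invariance of Markman's orbit is not among the properties listed in Theorem~\ref{thm:mmlattice}, so you must cite it from Markman. State it as invariance of the orbit rather than ``$L$ is a local system'': for $n=2$, the reflection in $v$ is a nontrivial automorphism of the pair $(L(M),H^2(M))$.
\item In case (i), it is not formal that $(S,\alpha)$ deforms to an untwisted K3 surface with $v$ staying of Hodge type. The paper needs Lemma~\ref{lem:twistedk3family} for this, which rests on the density result of \cite{breethesis}.
\item The rank of $v$ is a deformation invariant. Positive rank is arranged by a shift, or by allowing rank zero, not by deforming.
\item If you pass to the large volume limit before deforming, Theorem~\ref{thm:mmlattice}(d) is not enough. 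You also need the result of \cite{bayermacri} that the wall-crossing birational maps intertwine the Mukai homomorphisms up to a Hodge isometry of $\widetilde{H}(\cC,\bbZ)$ fixing $v$.
\item On $v$-genericity: if the base $T$ is a curve, as in the paper, you can simply delete the finitely many non-generic points. Perturbing $\sigma$ instead would introduce further birational steps.
\end{enumerate}
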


We will need the following lemma, which we state in terms of relative topological K-theory. 

\begin{lemma}\label{lem:twistedk3family}
    Let $(S,L)$ be a polarized K3 surface, $\alpha \in \Br(S)[n]$ and a Mukai vector $v \in \widetilde{H}^{1,1}(S,\alpha,\bbZ).$ Then there exists a smooth, irreducible affine curve $T$, a family of K3 surfaces $f : \cS \to T$, a relative polarization $\cL$ on $\cS$, a $\mu_n$-gerbe $\theta \in H^0(\cS,\mu_n)$ and a global section $\nu$ of the relative topological K-theory $K^{\mathrm{top}}((\cS,\theta)/T)$ such that
    \begin{enumerate}
        \item For all $t \in T$, $\nu_t \in K^{\mathrm{top}}((\cS,\theta)/T)$ is a Hodge class;
        \item $(S_0, L_0, \alpha_0, \nu_0) \simeq (S,  L, \alpha, v)$;
        \item and $\alpha_1 = 1 \in \Br(S_1)$,    
    \end{enumerate}
    where $\alpha_t \in \Br(\mathcal S_t)$ denotes the image of $\mathcal \theta_t$ via the natural map $H^2(\mathcal S_t, \mu_n) \to \Br(\mathcal S_t)$.
\end{lemma}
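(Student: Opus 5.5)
We build the family inside a moduli space of polarized K3 surfaces carrying a $\mu_n$-class, and then cut out a curve. First lift $\alpha$ to a class $B \in H^2(S,\mu_n)$. Since $\alpha$ is $n$-torsion, it comes from $H^2(S,\bbZ)\otimes \bbZ/n$ via the Kummer sequence, and we choose a lift $\tilde B \in H^2(S,\bbZ)$ with $\alpha = \exp(\tilde B/n)$. Consider the local universal deformation of $(S,L)$ over a small polydisc in the polarized period domain. The class $\tilde B$ extends flatly by parallel transport, and so does its reduction mod $n$. This gives a $\mu_n$-gerbe $\theta$ on the family (as a class in $H^2(\cS,\mu_n)$; the statement's ``$H^0$'' should presumably read $H^2$). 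Algebraizing, we work over a quasi-projective base: an étale chart of the moduli stack of polarized K3 surfaces with a level structure mod $n$. Over this base the lattice $H^2(\cS_t,\bbZ/n)$ is trivialized, so $\theta$ is a global section of $R^2f_*\mu_n$. Because $H^1(\cS_t,\mu_n)=0$ and $H^3(\cS_t,\mu_n)=0$, this section lifts to $H^2(\cS,\mu_n)$ after possibly shrinking or replacing the base by an étale cover, using the Leray spectral sequence.

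Next we handle the Mukai vector. The twisted Mukai lattice $\widetilde H(S,\alpha,\bbZ)$ is $\widetilde H(S,\bbZ)$ with Hodge structure twisted by $\exp(\tilde B/n)$. The relative topological K-theory $K^{\mathrm{top}}((\cS,\theta)/T)$ forms a local system, and $v$ extends to a flat section $\nu$. The Hodge condition on $\nu_t$ is the vanishing of the pairing of $\nu_t$ with $\sigma_t$. Here $\sigma_t$ is the $B$-field twisted period, namely $\exp(\tilde B/n)\cdot$ (the period of $\cS_t$), taken with $\tilde B$ flat. Writing $v=(r,c,s)$, this condition reads
\[
\bigl(c - r\tilde B/n\bigr)\cdot \sigma_t = 0 .
\]
It is one linear condition on the period, cutting out a hyperplane section $D$ of the period domain. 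This hyperplane is nonempty, since it passes through the point $0$ corresponding to $S$, and it has positive dimension, since the polarized period domain has dimension $19$.

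Inside $D$ we then need a point $1$ where $\alpha_1$ is trivial. The class $\alpha_t$ is trivial in $\Br(\cS_t)$ exactly when $\tilde B/n$ lies in $\NS(\cS_t)\otimes\bbQ + H^2(\cS_t,\bbZ)$. It therefore suffices to find $t\in D$ where $\tilde B$ itself is algebraic, i.e. $\tilde B\cdot\sigma_t=0$. The locus where both $\tilde B$ and $c - r\tilde B/n$ are orthogonal to $\sigma_t$ has codimension at most $2$ in a space of dimension $19$. It is nonempty provided the lattice spanned by $L$, $\tilde B$ and $c$ can be made hyperbolic, i.e. of signature $(1,\ast)$. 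We achieve this by modifying $\tilde B$ by $n\cdot$(a suitable integral class), which does not change $\alpha$. Concretely, we add $n\lambda$ with $\lambda$ chosen so that the span of $L$, $\tilde B + n\lambda$ and $c$ is negative definite on $L^\perp$. The Torelli theorem together with surjectivity of the period map then produces polarized K3 surfaces at such period points.

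Finally, we take $T$ to be a smooth irreducible affine curve. We obtain it by normalizing a general curve in the algebraic locus $D$ through the points $0$ and $1$, with both points lying in a single irreducible component, and removing singular points other than these. We then pull back $\cS$, $\cL$, $\theta$ and $\nu$ to $T$. Property (1) holds because $T\subset D$, and properties (2) and (3) hold by construction. The main obstacle is ensuring that the Noether–Lefschetz type locus $D$, where $\nu$ stays Hodge, is irreducible and contains a point with $\alpha$ trivial. The plan is to use that $D$ is itself a moduli space of lattice-polarized K3 surfaces, for the lattice generated by $L$ and the algebraic part coming from $v$. Its irreducibility, on a connected component, follows from Torelli plus the connectedness of the relevant period domain component, or else one simply chooses a connected component through $0$ containing the special sublocus.
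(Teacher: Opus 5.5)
Your proposal is correct in outline, and its setup matches the paper's. The locus $D$ where $L$ and $nc-r\tilde B$ stay algebraic is exactly the moduli space of $\Lambda$-polarized K3 surfaces the paper uses; its ``projecting $v$ to $H^2$ and scaling'' produces a multiple of the same class. The end of the argument is also the same: an irreducible curve through two points, made smooth and affine, followed by lifting the gerbe via Leray.

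The genuine difference is how you find the point $1$ with $\alpha_1=1$. The paper quotes an existence/density result from Br\'ee's thesis for the locus where the transported gerbe dies in the Brauer group. You instead construct one such point by hand: you replace $\tilde B$ by $\tilde B+n\lambda$ so that $\Lambda'=\Lambda+\bbZ(\tilde B+n\lambda)$ is hyperbolic, then use surjectivity of the period map on the Noether--Lefschetz sublocus $D_{\Lambda'}$. This is more elementary and self-contained. It is essentially the mechanism behind such density statements, and you only need nonemptiness.

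However, density gives two facts for free that you must verify yourself.
\begin{enumerate}
\item The new point must lie in the component of the moduli space through $[S]$. This holds: each of the two components of the $\Lambda$-period domain contains one of the two components of the $\Lambda'$-period domain, and removing the $(-2)$-walls does not disconnect it.
\item $L$ must remain ample at $1$, or $\cL$ is not a relative polarization. This requires that the saturation of $\Lambda'$ contain no $(-2)$-class orthogonal to $L$; otherwise all of $D_{\Lambda'}$ lies on a wall. Negative definiteness on $L^\perp$ alone does not guarantee this. You can arrange it by taking $\lambda=m\mu$ with $\mu\in\Lambda^\perp$, $\mu^2<0$, $\mu$ not proportional to the projection of $\tilde B$ to $\Lambda^\perp\otimes\bbQ$, and $m\gg 0$. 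The projection of $\tilde B+nm\mu$ then has bounded divisibility in $(\Lambda^\perp)^\vee$ while its square tends to $-\infty$.
\end{enumerate}

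There are two minor slips.
\begin{enumerate}
\item Shifting the $B$-field replaces $v$ by $\exp(\lambda)v$, so $c$ becomes $c+r\lambda$. The lattice to make hyperbolic is therefore spanned by $L$, the invariant class $nc-r\tilde B$, and $\tilde B+n\lambda$, not by $L$, $\tilde B+n\lambda$ and an unchanged $c$.
\item The Leray lift of $\theta$ needs $H^3(T,\mu_n)=0$ on the base, not the vanishing of $H^3(\cS_t,\mu_n)$, which is irrelevant. This holds by Artin vanishing once you restrict to the affine curve, as the paper does; shrinking the high-dimensional base does not achieve it.
\end{enumerate}
Your reading of $H^0$ as $H^2$ in the statement is right.
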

\begin{proof}
    Up to projecting the algebraic class $v \in \widetilde{H}(S,\alpha,\bbZ) \otimes \bbQ \simeq H^{\text{even}}(S,\bbQ)$ into the middle rational cohomology of $S$, scaling by an integer, and \cite[Lem.\ 18.5]{periodindex}, we may assume that $v \in \Pic(S)$. \par 
    Now let $\Lambda \subset \Pic(S)$ be the saturation of the rank 2 lattice spanned by $v$ and $L$, and let $T$ be a neighborhood of $S$ in the moduli space of $\Lambda$-polarized K3 surfaces. Let $0 \in U$ denote the point corresponding to $S$. Up to a finite étale base-change of $U$, there exists a universal K3 surface $f : \cS \to U$ as well as a family of $\mu_n$-gerbes in $\widetilde{\theta} \in H^0(\cS,R^2f_*\mu_n)$ lifting $\alpha$ at $0 \in U$. By the openness of ampleness, up to shrinking $U$ we may assume that the parallel transport of $L$ is ample everywhere. \par 
    Now if $S$ has Picard rank at least 3, \cite[Prop.\ 4.2.9]{breethesis} applies\footnote{Thanks to Dominique Mattei for pointing out the reference.} and there exist points $t \in U$ for which the image of $\widetilde\theta_t$ in $H^2(\cS_t,\bbG_m)$ vanishes; but by \cite[Rem.\ 4.2.10]{breethesis} since such points are dense in the moduli space of $\Lambda$-polarized K3 surfaces, the Picard rank assumption can be removed. Let $1 \in U$ be such a point where this property is satisfied, and take any smooth, irreducible affine curve $T$ joining $0$ and $1$ in $U$ and restrict $\cS$ to this curve $T$. Up to passing to a further cover of $T$, we may assume that $L$ is a genuine line bundle, and then by the Leray spectral sequence and Artin vanishing we may lift $\widetilde\theta$ to a global $\mu_n$-gerbe class $\theta \in H^2(\mathcal S, \mu_n).$ Then all the conditions required above are satisfied for the resulting family. 
\end{proof}

For the proof of the proposition we freely make use of the machinery of relative stability conditions established in \cite{blmnps}. 

\begin{proof}[Proof of Proposition \ref{prop:markmanlatticegeneral}]
    First observe that when $M$ is a K3$^{[n]}$-type hyperkähler manifold where $n \leq 3$, the result is automatic, because lattice-theoretically there is only one embedding of $H^2(M_\sigma(v),\bbZ)$ into the Mukai lattice (up to isometry of Mukai lattice) \cite[Thm.\ 9.1, Lem.\ 9.2 and 9.4]{markmanmukailattices}. So it suffices to consider the case when $n \geq 4$. \par
    In case (1) with $\alpha = 0$, we note by \cite[Thm.\ 1.1]{bayermacri} $M_\sigma(v)$ is birational to a Gieseker moduli space of stable sheaves, so it follows from Theorem \ref{thm:mmlattice}. So when $\alpha \neq 0$, or in case (2), the idea is to reduce to untwisted K3 surfaces by deformation. \par 
    Suppose we are in case (1) with $\alpha \in \Br(S)$ arbitrary. By Lemma \ref{lem:twistedk3family}, there exists a family of twisted polarized K3 surfaces $(\cS',\alpha')$ over $T$ which recovers $(S,\alpha)$ at $0 \in T$ and some untwisted K3 surface $S'$ at $1 \in T$. We first produce a relative stability condition over $T$ which coincides with $\sigma$ at $0 \in T$. By the (easier) analogue of \cite[Thm.\ 12.11]{periodindex} for surfaces, tilting of slope-stability produces a relative stability condition $\sigma'$ on $(\cS'/\alpha')$ over $T$ with respect to a lattice $\Lambda_{H}$ for $H$ the relative polarization. By the construction of $\Lambda_{H}$ in \cite[Ex.\ 11.3]{periodindex}, this lattice can be interpreted as the dual to the monodromy-invariant sublattice $M \subset K_{\mathrm{num}}(\cS,\alpha)$ spanned, up to B-field twist, by the powers of the polarization $H$. Using Yoshioka's trick \cite[Ex.\ 21.7 and Rem.\ 30.7]{blmnps}, we may enlarge the lattice $M$ at $0 \in T$ to $M = K_{\mathrm{num}}(S,\alpha)$. By definition, the stability condition $\sigma'_0$ produced by tilting lies in the connected component $\Stab^\dagger(S,\alpha)$, and is therefore deformation equivalent to $\sigma$. Then by the deformation trick of \cite[Rem.\ 30.6 and Prop.\ 30.8]{blmnps}, since the map $\Stab_{M^\vee}((\cS,\alpha')/T) \to \Hom(M^\vee,\bbC)$ is a connected covering space over an open subset containing the central charges $Z_{\sigma'}$ and $Z_{\sigma}$, we may lift the deformation in $\Stab^\dagger(S,\alpha)$ to a deformation in $\Stab_{M^\vee}((\cS',\alpha')/T)$ from $\sigma'$ to a relative stability condition which coincides with $\sigma$ at $0 \in T$. \par
    By \cite[Lem.\ 16.5]{periodindex}, up to a small deformation in $\Stab((\cS,\alpha)/T)$ we may also assume that $\sigma'_1$ is $v$-generic at $1 \in T$ without changing the moduli space $M = M_{\sigma'_0}(v)$ at $0 \in T$. Then by removing the finitely many remaining points of $T$ at which the stability condition is not $v$-generic, we may assume $\sigma'$ is $v$-generic at every point of $T$. Taking the relative moduli space $\pi : M_{\sigma'}(v) \to T$ gives a universal twisted sheaf over $\cS \times_T M'$ which is $(\alpha' \boxtimes \theta)$-twisted for some $\theta \in \Br(M')$. This gives a map of local systems 
    \[K^{\mathrm{top}}_0((\cS',-\alpha')/T) \to K^{\mathrm{top}}_0((M',\theta)/T) \xrightarrow{\mathrm{c}_2}R^4\pi_*(\underline{\bbZ}_{M'}) \to R^4\pi_*(\underline{\bbZ}_{M'})/\mathrm{Sym}^2R^2\pi_*(\underline{\bbZ}_{M'}),\]
    where $c_2$ is the second Chern class. Note that in the quotient $R^4\pi_*(\underline{\bbZ}_{M'})/\mathrm{Sym}^2R^2\pi_*(\underline{\bbZ}_{M'})$, the second Chern class $c_2$ coincides with $\mathrm{ch}_2 = \mathrm{ch}_2^B$ for any $B$-field lift of $\theta$, so in particular fiber-by-fiber this map of local systems is also a map of Hodge structures. But by \cite[Thm.\ 1.14]{markmanmonodromy} and the characterization in part (c) of Theorem \ref{thm:mmlattice}, at $1 \in T$ this is a Hodge isometry $\widetilde{H}(S'_1,\bbZ) \to L(M_{\sigma'_1}(v))$ which up to scaling sends $v_1^\vee$ to $c_2(T_{M_{\sigma}'(v)})$. By parallel transport it must also be an isomorphism of Hodge structures at $0 \in T$, and the monodromy-invariance of the bilinear pairing implies that it is also an isometry at $0 \in T$. Hence $\widetilde{H}(S,-\alpha,\bbZ)$ is Hodge-isometric to $L(M_{\sigma}(v))$. Composing with the Hodge isometry $\widetilde{H}(S,\alpha,\bbZ) \simeq \widetilde{H}(S,-\alpha,\bbZ)$ coming from the anti-autoequivalence $(-)^\vee$ of $\Db(S,\alpha)$ and $\Db(S,-\alpha)$ completes the claim in case (1). \par 
    Case (2) can be proved by a similar deformation argument. By \cite[Cor.\ 32.1 and Prop.\ 32.4]{blmnps}, there exists a family of cubic fourfolds (and hence Kuznetsov components) giving a deformation of the moduli space $M_\sigma(v)$ to a moduli space of stable objects on a twisted K3 surface $(S,\alpha)$, so by repeating the argument we reduce to the previous case, and the proposition follows. 
\end{proof}
By specializing to the large volume limit for $\Stab^\dagger(S,\alpha),$ the result holds for moduli spaces of (twisted) stable sheaves as well.
As a consequence of this and the twisted derived Torelli theorem, we find: 
\begin{corollary}\label{cor:birimpliesequiv}
    Let $M_1$ and $M_2$ be positive-dimensional birational moduli spaces of Bridgeland-stable objects on K3 categories $\cC_1$ and $\cC_2$, where each $\cC_i$ is either the derived category of a twisted K3 surface or the Kuznetsov component of a cubic fourfold, with stability conditions $\sigma_i \in \Stab^\dagger(\cC_i).$ Then there is a Hodge isometry
    \[\widetilde{H}(\cC_1,\bbZ) \simeq \widetilde{H}(\cC_2,\bbZ).\]
    Moreover, if both $\cC_i$ are equivalent to twisted derived categories $\Db(S_i,\alpha_i)$ and either $\alpha_1 = 0$, $\cC_1 \simeq \Ku(X)$ for some smooth cubic fourfold $X$, or $S_1$ has Picard rank $\geq 12$, then in fact we have an equivalence
    \[\cC_1 \simeq \cC_2.\]
\end{corollary}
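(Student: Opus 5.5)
The first claim follows from Proposition \ref{prop:markmanlatticegeneral} together with part (d) of Theorem \ref{thm:mmlattice}. Let $M_i = M_{\sigma_i}(\cC_i, v_i)$. Proposition \ref{prop:markmanlatticegeneral} gives Hodge isometries $L(M_i) \simeq \widetilde{H}(\cC_i,\bbZ)$, and these are compatible with the embeddings of $H^2(M_i,\bbZ)$. The birational map $M_1 \dashrightarrow M_2$ gives, by Theorem \ref{thm:mmlattice}(d), a Hodge isometry $L(M_1)\simeq L(M_2)$. Composing yields
\[\widetilde{H}(\cC_1,\bbZ)\simeq L(M_1)\simeq L(M_2)\simeq \widetilde{H}(\cC_2,\bbZ).\]
This Hodge isometry moreover sends $v_1$ to $\pm v_2$, since $v_i$ spans $H^2(M_i,\bbZ)^\perp$ up to sign. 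This remark is not needed for the first claim.

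For the second claim, we upgrade the Hodge isometry to an equivalence using derived Torelli statements in each of the three cases.

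\emph{Case $\alpha_1 = 0$.} Here $\widetilde{H}(S_2,\alpha_2,\bbZ)$ is Hodge isometric to $\widetilde{H}(S_1,\bbZ)$, which contains the hyperbolic plane $U = H^0\oplus H^4$ inside its algebraic part. By the twisted derived Torelli theorem of Huybrechts--Stellari, $(S_2,\alpha_2)$ is then derived equivalent to an untwisted K3 surface that is Fourier--Mukai partner to $S_1$. One must be careful with the orientation of the positive four-space. If the isometry is orientation reversing, compose with $-\mathrm{id}_{H^2}$ or with the dual; both are realized by (anti-)equivalences, and passing to the dual sends $\alpha$ to $-\alpha$. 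Alternatively, replace the isometry by one preserving orientation, which is possible since the lattice contains $U$ and hence admits an orientation-reversing Hodge isometry.

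\emph{Case $\cC_1\simeq \Ku(X)$.} Both $\Ku(X)$ and $\Db(S_2,\alpha_2)$ carry Hodge structures of K3 type, and $\cC_1$ is assumed to be a twisted K3 category. We then apply Huybrechts' theorem (\emph{The K3 category of a cubic fourfold}, twisted version): two such categories are equivalent if their Mukai lattices are Hodge isometric. The orientation issue is again fixed by an orientation-reversing Hodge isometry. Such an isometry exists because the algebraic part contains $A_2$ or, more precisely, because $\widetilde{H}^{1,1}$ contains a hyperbolic-type sublattice allowing a reflection.

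\emph{Case $\rho(S_1)\geq 12$.} Then the algebraic lattice $\widetilde{H}^{1,1}(S_1,\alpha_1,\bbZ)$ has rank at least $14$. By Nikulin, its transcendental complement embeds with a copy of $U$ splitting off, so $U\subset \widetilde{H}^{1,1}$ and $\alpha_1$ can be untwisted. More precisely, the existence of an isotropic algebraic vector of divisibility one implies that $(S_1,\alpha_1)$ is derived equivalent to an untwisted K3 surface. This reduces to the first case, applied with the roles adjusted.

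The main obstacle I expect is the orientation compatibility required in the twisted derived Torelli theorem. The Hodge isometry produced from Markman's lattice may reverse the orientation of the positive directions, and one has to check that composing with the dual/shift (anti)equivalence or with a suitable reflection in each case yields an orientation-preserving one. I would first try to handle this by showing that in each case the algebraic part contains a $(-2)$-class or a copy of $U$, whose reflection reverses orientation.
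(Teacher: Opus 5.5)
Your proposal is correct and follows the paper's proof. The first claim comes from composing the Hodge isometries $L(M_i)\simeq\widetilde H(\cC_i,\bbZ)$ of Proposition~\ref{prop:markmanlatticegeneral} with the birational invariance in Theorem~\ref{thm:mmlattice}(d). The second claim is twisted derived Torelli, after fixing the orientation with an orientation-reversing Hodge self-isometry of $\widetilde H(S_1,\alpha_1,\bbZ)$. The paper does not prove that such self-isometries exist; it cites Huybrechts for the cases $\alpha_1=0$ and $\cC_1\simeq\cA_X$, and Reinecke for $\rho(S_1)\ge 12$.

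The only place you argue differently is the case $\rho(S_1)\geq 12$. There you show via Nikulin that $U\subset N:=\widetilde H^{1,1}(S_1,\alpha_1,\bbZ)$, then untwist and reduce to $\alpha_1=0$. This works, but state the criterion precisely:
\begin{itemize}
\item $N$ is even and indefinite, and $\ell(A_N)=\ell(A_T)\le\operatorname{rk}T=22-\rho$.
\item Hence $\operatorname{rk}N=\rho+2\ge\ell(A_N)+3$ as soon as $\rho\ge 12$, and Nikulin then gives $N\simeq U\oplus N'$. This is where the bound $12$ comes from.
\end{itemize}
The untwisting detour is unnecessary. Once $U\subset N$, the isometry $-\mathrm{id}_U\oplus\mathrm{id}_{U^\perp}$ fixes $T$ and flips the positive line of $U_{\mathbb R}$. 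It is therefore already the orientation-reversing Hodge self-isometry you need, and you can finish exactly as in the other cases.

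There are two corrections to make.
\begin{itemize}
\item In your final paragraph, reflection in a $(-2)$-class does \emph{not} reverse orientation. It fixes the hyperplane $\delta^\perp$ pointwise, and $\delta^\perp$ still contains positive four-spaces; this is consistent with such reflections being induced by spherical twists. Orientation reversal needs a positive algebraic class. In the Kuznetsov case, the right reason is that the roots of $A_2\subset\widetilde H^{1,1}(\cA_X,\bbZ)$ have square $+2$. Your alternative appeal to a hyperbolic sublattice does not work in general, since $\widetilde H^{1,1}(\cA_X,\bbZ)$ contains $U$ only when $\cA_X$ is equivalent to an untwisted K3 surface.
\item In the case $\alpha_1=0$, the composition with $-\mathrm{id}_{H^2}$ must be made on the $S_1$ side. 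There it is a Hodge self-isometry because the $B$-field vanishes. On the twisted side it lands in $\widetilde H(S_2,-\alpha_2,\bbZ)$ and would only give $\Db(S_1)\simeq\Db(S_2,-\alpha_2)$.
\end{itemize}
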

\begin{proof}
    Combining Theorem \ref{thm:mmlattice}(b) and (d) with the previous proposition immediately yields the first result. \par 
    Assume now that $\cC_i \simeq \Db(S_i,\alpha_i)$, so we have an isometry 
    \[\widetilde{H}(S_1,\alpha_1,\bbZ) \simeq \widetilde{H}(S_2,\alpha_2,\bbZ).\]
    If this isometry is orientation-preserving, then the twisted derived Torelli theorem \cite[Thm.\ 0.1]{twistedtorelli} applies and $\cC_1 \simeq \cC_2$. If it is not orientation-preserving, either condition that $\alpha_1 = 0$ or $\cC_1 \simeq \cA_X$ implies $\widetilde{H}(S_1,\alpha_1,\bbZ)$ admits an orientation-reversing Hodge self-isometry \cite[Lem.\ 2.3 and Rem.\ 2.4]{k3category}; similarly any twisted K3 with Picard rank $\geq 12$ admits such an orientation-reversing isometry \cite[Cor.\ 2.8]{reinecke}. It follows that up to composing with this self-isometry we may again apply the twisted derived Torelli theorem. 
\end{proof}

\section{Twisted derived categories of moduli spaces on Kuznetsov components}
\label{sec:kuznetsovcomponent}
By applying the results above, we are able to generalize the main theorem of \cite{huybrechtsbottini} to moduli spaces on Kuznetsov components of cubics besides the Fano variety of lines.

\begin{theorem}\label{thm:main_kuznetsov_component}
    Let $Y$ be a smooth cubic fourfold. Let $v \in \widetilde{H}(\cA_Y, \mathbb Z)$ be a non-zero primitive vector and let $\sigma$ be a $v$-generic stability condition in $\Stab^{\dagger}(\mathcal A_Y)$. Furthermore, assume that the moduli space
    $M_{\sigma}(\mathcal A_Y, v)$
    is of Picard rank $2$ and admits a rational Lagrangian fibration such that the corresponding isotropic class in $H^2(X, \mathbb Z)$ is indivisible. Then, there is a Brauer class $\theta \in \Br(M_{\sigma}(\mathcal A_Y, v))$ and an exact linear equivalence
    $$D^b(M_{\sigma}(\mathcal A_Y, v), \theta^n) \simeq \mathcal A_Y^{[n]},$$
    where $2n = v^2+2$ is the dimension of $M_{\sigma}(\mathcal A_Y, v)$.
\end{theorem}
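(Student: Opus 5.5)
Write $M \coloneqq M_{\sigma}(\mathcal A_Y, v)$; the plan is to transport the problem to a twisted K3 surface and then apply Corollary \ref{cor:generallangrangianfibered}. Since $M$ has a rational Lagrangian fibration with indivisible isotropic class, the Lagrangian fibration becomes regular on some birational model $M'$. Such a model exists because, by Matsushita/Markman, the isotropic nef class on a hyperkähler of K3$^{[n]}$-type is semiample on a suitable birational model. This gives a Lagrangian fibration $f \colon M' \to \mathbb P^n$ with $f^*\mathcal O(1)$ indivisible, and $\rho(M') = 2$. Corollary \ref{cor:generallangrangianfibered} then yields a twisted K3 surface $(S,\alpha)$, a class $\theta' \in \Br(M')$ and an equivalence
$$D^b(M', \theta'^n) \simeq D^b(S,\alpha)^{[n]}.$$
By \cite[Thm.\ 0.3]{dequivalence}, birational hyperkähler manifolds of K3$^{[n]}$-type are twisted derived equivalent, with Brauer classes matched under $\Br(M) \simeq \Br(M')$. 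This transports the equivalence to $D^b(M, \theta^n)$, where $\theta$ is the image of $\theta'$.

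It then remains to show $D^b(S,\alpha)^{[n]} \simeq \mathcal A_Y^{[n]}$. It suffices to produce an equivalence $D^b(S,\alpha) \simeq \mathcal A_Y$, since the symmetric power construction is functorial in the K3 category and compatible with equivalences. For this I would use Corollary \ref{cor:birimpliesequiv}. By the proof of Corollary \ref{cor:generallangrangianfibered} (via \cite[Thm.\ 1.2]{huybrechtsmattei}), $M'$ is birational to $\overline{\Pic}^0_{\alpha}(\mathcal C/|L|)$. That space is a moduli space of $\alpha$-twisted stable sheaves on $S$, hence, through the large volume limit, a moduli space of Bridgeland-stable objects for a stability condition in $\Stab^\dagger(S,\overline\alpha)$. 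So $M$ and $\overline{\Pic}^0_\alpha$ are birational moduli spaces on the K3 categories $\mathcal A_Y$ and $D^b(S,\overline\alpha)$. Proposition \ref{prop:markmanlatticegeneral} and Theorem \ref{thm:mmlattice}(d) then give a Hodge isometry $\widetilde H(\mathcal A_Y,\mathbb Z) \simeq \widetilde H(S,\overline\alpha,\mathbb Z)$.

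The main obstacle is upgrading this Hodge isometry to an equivalence of categories. Corollary \ref{cor:birimpliesequiv} only does so when both categories are twisted K3 categories, whereas $\mathcal A_Y$ is a priori not one. My first attempt is to use the Hodge isometry to show that $\mathcal A_Y$ is of the form $D^b(S_1,\alpha_1)$: the transcendental lattice of $\widetilde H(\mathcal A_Y)$ coincides with that of a twisted K3, and by the characterization of Huybrechts (building on Addington--Thomas and \cite{blmnps}), $\mathcal A_Y$ is then equivalent to a twisted derived category. With $\mathcal C_1 = \mathcal A_Y \simeq D^b(S_1,\alpha_1)$ and $\mathcal C_2 = D^b(S,\overline\alpha)$, the "Kuznetsov component" clause of Corollary \ref{cor:birimpliesequiv} supplies an orientation-reversing self-isometry. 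Twisted derived Torelli then gives $D^b(S,\overline\alpha) \simeq \mathcal A_Y$.

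Composing the three equivalences gives $D^b(M,\theta^n) \simeq \mathcal A_Y^{[n]}$. Finally, one checks that $2n = v^2+2$ agrees with the dimension count for the Lagrangian fibration over $\mathbb P^n$.
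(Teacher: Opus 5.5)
Your proposal is correct and follows essentially the paper's route. Both arguments use Huybrechts--Mattei to reach $\overline{\Pic}^0_\alpha$, then Corollary \ref{cor:twistedpicequiv} together with twisted BKR and \cite[Thm.\ 0.3]{dequivalence} for the birational transfer, and finally Corollary \ref{cor:birimpliesequiv} with its Kuznetsov-component clause to obtain $\mathcal A_Y \simeq D^b(S,\overline\alpha)$. The only difference is how you show that $\mathcal A_Y$ is a twisted K3 category: the paper reads this off directly from the isotropic class of the rational Lagrangian fibration in $H^2(M,\mathbb Z)\subset \widetilde H(\mathcal A_Y,\mathbb Z)$ via \cite[Prop.\ 33.1]{blmnps}, whereas you get it from the Hodge isometry with $\widetilde H(S,\overline\alpha,\mathbb Z)$, which rests on the same criterion.
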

\begin{remark}
As mentioned before, the square of the divisibility of the isotropic class defining the Lagrangian fibration always divides $n-1$, see \cite[Lem.\ 2.5]{markmanlagrangian}. Hence, the condition on the divisibility is automatically satisfied if $n-1 = v^2/2$ is square-free.
\end{remark}
\begin{proof}
    Since $M_{\sigma}(\cA_Y, v)$ is a non-special rationally Lagrangian fibered hyperkähler manifold of K3$^{[n]}$-type, there is a polarized K3 surface $(S, L)$ and a special Brauer class $\alpha \in \SBr(S, L)$ such that $M_{\sigma}(\cA_Y, v)$ is birational to $\Pic^0_{\alpha}(S, L)$ by \cite[Thm.\ 1.2]{huybrechtsmattei}.
    Since $\rho(S) = \rho(M_{\sigma}(\cA_Y, v)) -1 = 1$, the assumption on the divisibility of the pullback of $\mathcal O_{\mathbb P^n}(1)$ ensures that $\Pic(S) = \mathbb Z L$ by \cite[Thm.\ 1.5]{markmanlagrangian} and, in particular, every member of the linear system $|L|$ is integral. Thus, the compactified twisted Jacobian $\overline\Pic^0_{\alpha} \coloneqq \overline{\Pic}^0_\alpha(S, L)$ is a smooth hyperkähler manifold of K3$^{[n]}$-type, which is birational to the moduli space $M_\sigma(\cA_Y, v)$.

    Let us now show that there is an exact linear equivalence $\cA_Y \simeq D^b(S, \overline{\alpha})$, where $\overline{\alpha}$ denotes the image of $\alpha$ in $\Br(S)$. Since $M_\sigma(\cA_Y, v)$ and $\overline{\Pic}^0_\alpha$ are birational moduli spaces of stable objects in $\Db(S,\alpha)$ and $\cA_Y$ respectively, we will have
    $$\cA_Y \simeq D^b(S, \overline{\alpha})$$
    by Corollary \ref{cor:birimpliesequiv} as long as we know that $\cA_Y$ is equivalent to a possibly different twisted K3 surface $\Db(S',\alpha')$. On the other hand, the existence of a rational Lagrangian fibration on $M_\sigma(\cA_Y,v)$ implies the existence of an isotropic algebraic class with respect to the BBF form in 
    \[H^2(M_\sigma(\cA_Y,v),\bbZ) \subset \widetilde{H}(\cA_Y,\bbZ).\]
    In particular it follows by \cite[Prop.\ 33.1]{blmnps} that there exists a twisted K3 surface such that $\cA_Y \simeq \Db(S',\alpha')$, so Corollary \ref{cor:birimpliesequiv} applies. 
    
    By Corollary \ref{cor:twistedpicequiv}, there is an exact linear equivalence
    $$D^b(\overline{\Pic}^0_{\alpha},o_\alpha([\overline\Pic^{n}])^{-1}) \simeq D^b(\overline{\Pic}^{n}, o_{n}(\alpha)) \simeq D^b(S^{[n]}, \overline{\alpha}^{[n]}).$$
    By twisted BKR, cf.\ \cite[Prop.\ 1.2]{huybrechtsbottini}, there is an equivalence
    $$D^b(S^{[n]}, \overline{\alpha}^{[n]}) \simeq (D^b(S, \overline{\alpha}))^{[n]}.$$
    Fix a birational map between $\overline{\Pic}^0_{\alpha}$ and $M_{\sigma}(\cA_Y, v)$. The induced isomorphism on Brauer groups identifies
    $o_\alpha([\overline\Pic^{n}])^{-1} \in \Br(\overline{\Pic}^0_\alpha)$ with a class $\theta \in \Br(M_\sigma(\cA_Y, v))$.
    The claim then follows by composing the two equivalences above with the equivalence
    $$D^b(\overline{\Pic}^0_\alpha,o_\alpha([\overline\Pic^{n}])^{-1}) \simeq D^b(M_\sigma(\cA_Y, v), \theta)$$
    induced by the birational map between $\overline{\Pic}^0_{\alpha}$ and $M_{\sigma}(\cA_Y, v)$, see \cite[Thm.\ 0.3]{dequivalence}.
\end{proof}
\begin{remark}
    Suppose that $v = a\lambda_1 + b\lambda_2$, where $\lambda_i = v(\mathrm{pr}_{\cA_Y}(\cO_\ell(i)))$. Then the condition that the moduli space $M_\sigma(\cA_Y,v)$ is of Picard rank 2 is nonempty (and therefore very general) among those cubic fourfolds such that $M_\sigma(\cA_Y,v)$ admits a rational Lagrangian fibration. \par Indeed, the same holds for polarized hyperkähler varieties of K3$^{[n]}$-type and by \cite[Cor.\ 29.5]{blmnps} moduli spaces of objects with this Mukai vector in Kuznetsov components form a locally complete family of polarized hyperkähler manifolds of K3$^{[n]}$-type.
\end{remark}

For any cubic fourfold $Y$, Lehn, Lehn, Sorger and van Straten have constructed a certain eight-dimensional hyperkähler manifold $Z(Y)$ of K3$^{[4]}$-type arising as a particular contraction of the moduli space of twisted cubics \cite{llsvs}. For very general $Y$ such that $Z(Y)$ admits a rational Lagrangian fibration, we are able to apply the theorem above.

\begin{corollary}\label{cor:llsvs}
    Let $Y$ be a cubic fourfold with $\operatorname{rk} H^{2, 2}(Y, \mathbb Z) = 2$ for which $Z(Y)$ admits a rational Lagrangian fibration. Then there is a Brauer class $\theta \in \Br(Z(Y))$ and an exact linear equivalence
    $$D^b(Z(Y), \theta^4) \simeq \mathcal A_Y^{[4]}.$$
\end{corollary}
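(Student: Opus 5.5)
The plan is to deduce Corollary \ref{cor:llsvs} directly from Theorem \ref{thm:main_kuznetsov_component}. This requires three ingredients: an identification of $Z(Y)$ as a moduli space $M_\sigma(\mathcal A_Y, v)$ of Bridgeland-stable objects, a check of the Picard rank, and a check of the divisibility hypothesis.

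For the first ingredient, I would invoke the result of Li--Pertusi--Zhao \cite{lipertusizhao}. It says that $Z(Y)$ is isomorphic to $M_\sigma(\mathcal A_Y, v)$ with $v = 2\lambda_1 + \lambda_2$, where $\sigma \in \Stab^\dagger(\mathcal A_Y)$ is $v$-generic. This $v$ is primitive with $v^2 = 6$. Hence $2n = v^2 + 2 = 8$ and $n = 4$, which matches $\dim Z(Y) = 8$.

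For the divisibility hypothesis, the remark following Theorem \ref{thm:main_kuznetsov_component} applies. Here $n - 1 = 3$ is square-free, so the isotropic class defining the rational Lagrangian fibration is automatically indivisible.

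The Picard rank is the only point needing an argument, and it is the main (mild) obstacle. The Hodge structure $H^2(Z(Y), \mathbb Z) = v^\perp \subset \widetilde{H}(\mathcal A_Y, \mathbb Z)$ has algebraic part $v^\perp \cap \widetilde{H}^{1,1}(\mathcal A_Y, \mathbb Z)$. The algebraic lattice $\widetilde{H}^{1,1}(\mathcal A_Y, \mathbb Z)$ has rank $\operatorname{rk} H^{2,2}(Y, \mathbb Z) + 1 = 3$. It contains the $A_2$-lattice spanned by $\lambda_1, \lambda_2$, together with one extra class coming from the additional algebraic class on $Y$, namely the class orthogonal to $h^2$ in $H^{2,2}$. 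Since $v$ lies in this rank-$3$ lattice, its orthogonal complement has rank $2$, so $\rho(Z(Y)) = 2$. I would state this via the standard identification $\widetilde{H}^{1,1}(\mathcal A_Y) \otimes \mathbb Q \simeq \langle \lambda_1, \lambda_2 \rangle \oplus (H^{2,2}_{\mathrm{prim}}(Y) \cap H^4(Y, \mathbb Z))$ from Addington--Thomas.

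With the three hypotheses verified, Theorem \ref{thm:main_kuznetsov_component} yields $\theta \in \Br(Z(Y))$ and an exact linear equivalence $D^b(Z(Y), \theta^4) \simeq \mathcal A_Y^{[4]}$. This is the statement of Corollary \ref{cor:llsvs}.
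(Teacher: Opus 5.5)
Your proposal is correct and is essentially the paper's proof. Both realize $Z(Y)\simeq M_\sigma(\mathcal A_Y,v)$ via Li--Pertusi--Zhao with $v$ primitive of square $6$ and then apply Theorem \ref{thm:main_kuznetsov_component}; the paper writes $v=\lambda_1+2\lambda_2$ where you write $2\lambda_1+\lambda_2$, which changes nothing since both are primitive of square $6$. Your explicit checks of the Picard rank ($\operatorname{rk}\widetilde H^{1,1}(\mathcal A_Y,\mathbb Z)=1+\operatorname{rk}H^{2,2}(Y,\mathbb Z)=3$, so $\rho(Z(Y))=\operatorname{rk}(v^\perp\cap\widetilde H^{1,1})=2$) and of indivisibility ($n-1=3$ is square-free) correctly supply hypotheses the paper leaves implicit.
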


\begin{proof}
    By \cite{lipertusizhao}, we have
    $$Z(Y)  \simeq M_{\sigma}(\mathcal A_Y, v),$$
    for $v = \lambda_1+2\lambda_2$ with $v^2=6$. The claim then follows from  Theorem \ref{thm:main_kuznetsov_component}.
\end{proof}

\begin{remark}\label{rem:genericity_2}
By the same argument as in Remark \ref{rem:genericity}, the assumption on the Picard rank in Theorem \ref{thm:main_kuznetsov_component} and Corollary \ref{cor:llsvs} can be weakened to a genericity assumption. In particular, the conclusion of Corollary \ref{cor:llsvs} holds on a Zariski open subset of the Hassett divisors on which $Z(Y)$ admits a Lagrangian fibration.
\end{remark}
\printbibliography
\end{document}